\documentclass[oneside,english]{amsart}
\usepackage[T1]{fontenc}
\usepackage[latin9]{inputenc}
\usepackage{amsthm}
\usepackage{amstext}
\usepackage{amssymb}
\usepackage{esint}
\usepackage{amscd}
\makeatletter
\numberwithin{equation}{section}
\numberwithin{figure}{section}
\theoremstyle{plain}
\newtheorem{thm}{\protect\theoremname}[section]
\newtheorem{prop}[thm]{\protect\propositionname}
\theoremstyle{plain}
\theoremstyle{definition}

\theoremstyle{plain}

\newtheorem{cor}[thm]{\protect\corollaryname}
\theoremstyle{plain}
\newtheorem{rem}[thm]{\protect\remarkname}
\theoremstyle{plain}

\theoremstyle{plain}
\makeatother

\usepackage{babel}
\providecommand{\definitionname}{Definition}
\providecommand{\lemmaname}{Lemma}
\providecommand{\assumptionname}{Assumption}
\providecommand{\theoremname}{Theorem}
\providecommand{\corollaryname}{Corollary}
\providecommand{\remarkname}{Remark}
\providecommand{\propositionname}{Proposition}
	
\DeclareMathOperator{\loc}{loc}

\DeclareMathOperator{\esssup}{esssup}

\DeclareMathOperator{\cp}{cap}

\DeclareMathOperator{\divg}{div}
\DeclareMathOperator{\med}{med}

\begin{document}

\title[The Laplacian with density and sharp Sobolev-Orlicz embeddings]{Eigenvalues of the Neumann Laplacian with density and sharp Sobolev-Orlicz embeddings}

\author{Alexander Menovschikov}
\begin{abstract}
    We provide the estimates for the constant in the weighted Poincar\'e inequality for a special class of planar domains and weights. Based on this, we prove the lower bounds for the first non-zero eigenvalue $\mu_\rho$ of the Neumann Laplacian with density $\rho$. These estimates depend on the density function and the geometry of the domain. In particular, it is shown, that $\mu_\rho$ can be made arbitrarily large by changing the mass density of the domain.
\end{abstract}
\maketitle
\footnotetext{\textbf{Key words and phrases:} Sobolev spaces, Orlicz spaces, Laplacian with density, Neumann problem, eigenvalues} 
\footnotetext{\textbf{2000
Mathematics Subject Classification:} 46E35, 30C65.}

\section{Introduction}

Let $\Omega \subset \mathbb{R}^2$ be a bounded domain. We consider the Neumann eigenvalues problem for the Laplace operator with density.
    \begin{align}\label{DensLaplacian}
        \begin{cases}
            -\Delta u=\mu_\rho \rho u \,\, &\text{ in } \Omega,\\
            \frac{\partial u}{\partial\nu}=0 &\text{ on } \partial\Omega.
        \end{cases}
    \end{align}
Here $\rho$ is a positive continuous function belonging to $L^1(\Omega)$. This function characterizes the mass density of the membrane $\Omega$.

It is known (e.g. \cite{He06}) that if the domain $\Omega$ and the density $\rho$ are such that the embedding $W^{1,2}(\Omega) \hookrightarrow L^2(\Omega, \rho)$ is compact, then for problem \eqref{DensLaplacian} there exists a sequence of non-negative eigenvalues that goes to $+\infty$
    $$
        0 = \mu^0_{\rho} \leq \mu^1_{\rho} \leq \mu^2_{\rho} \leq \dots,
    $$
and according to the Min-Max principle, the first non-zero Neumann eigenvalue $\mu^1_{\rho} = \mu_{\rho}$ is given by
    $$
        \mu_{\rho} = \inf\limits_{u \in W^{1,2}(\Omega), u \ne 0, \int_{\Omega}u\rho = 0} \frac{\int_{\Omega}|\nabla u(x)|^2 \, dx}{\int_{\Omega}|u(x)|^2 \rho(x) \, dx},
    $$
or, equivalently 
    \begin{equation}\label{MinMax}
        \mu_\rho (\Omega) = \frac{1}{(B^\rho_{2,2}(\Omega))^{2}},
    \end{equation}
where $B^\rho_{2,2}(\Omega)$ is the best constant in the following Poincar\'e inequality:
    \begin{equation}\label{Poincare_22}
        \inf\limits_{c \in \mathbb{R}}\|u-c\|_{L^2(\Omega, \rho)} \leq B^\rho_{2,2}(\Omega) \|\nabla u\|_{L^2(\Omega)}.
    \end{equation}

The question of the dependence of the eigenvalues of problem \eqref{DensLaplacian} (or the corresponding problem in a weak formulation) on the density has been intensively investigated in recent years (see, e.g, \cite{He06, CSS15, CS19} and references therein). A summary of other problems that involve non-homogenuity of the mass density can be found in \cite{Ch18} and the references therein. Usually, to ensure compactness of the embedding $W^{1,2}(\Omega) \hookrightarrow L^2(\Omega, \rho)$ it is assumed that $\Omega$ is a Lipschitz domain and the density $\rho$ is in $L^\infty(\Omega)$.

The main goal of our paper is to provide more general conditions on the domain $\Omega$ and the density $\rho$ under which the Poincar\'e inequality \eqref{Poincare_22} holds and the embedding $W^{1,2}(\Omega) \hookrightarrow L^2(\Omega, \rho)$ is compact. In it turns it means that the spectrum of the Laplacian is discrete and Min-Max Principle for the eigenvalues can be applied. At the same time it implies that we can define the firs eigenvalue $\mu_{\rho}(\Omega)$ by the equality \eqref{MinMax}. Based on this, we prove the lower estimates for $\mu_{\rho}(\Omega)$ depending on the density $\rho$ and the geometry of the domain $\Omega$ only.

Our considerations are based on the approach first introduced in \cite{GG94} and then applied to the Sobolev embeddings theory and to spectral problems, for example, in \cite{GU09, GU16, GU16_1}. The idea is to prove the compactness of the embedding $W^{1,2}(\Omega) \hookrightarrow L^2(\Omega, \rho)$ in the following way. Let us consider a domain $\Omega'$ supporting the compact non-weighted embedding $W^{1,p}(\Omega') \hookrightarrow L^q(\Omega')$, $1 \leq p \leq 2$. Further we take a homeomorphism $\varphi: \Omega' \to \Omega$. With the help of a bounded composition operator $\varphi^\ast$ generating by $\varphi$, we shift functions from $\Omega$ to $\Omega'$. Then we use the compact non-weighted embedding on $\Omega'$ and go back to the domain $\Omega$ using another composition operator $(\varphi^{-1})^\ast$ generating by the inverse mapping $\varphi^{-1}: \Omega \to \Omega'$. It can be described by the following anticommutative diagram:
    \begin{equation}\label{Diag}
        \begin{CD}
            W^{1,2}(\Omega) @>{\varphi^\ast}>> W^{1,p}(\Omega') \\
            @VVV @VVV \\
            L^2(\Omega, \rho) @<{(\varphi^{-1})^\ast}<< L^q(\Omega')
        \end{CD}
    \end{equation}

If both $\varphi^\ast$ and $(\varphi^{-1})^\ast$ are bounded and the embedding operator $i: W^{1,p}(\Omega') \to L^q(\Omega')$ is compact, then the embedding $W^{1,2}(\Omega) \hookrightarrow L^2(\Omega, \rho)$ is also compact.

Since we are considering the Laplace operator on the plane, it is natural to use conformal mappings in the approach described above. Conformal mappings generate an isometry of Sobolev spaces $L^{1,2}$ and by the Riemann Mapping Theorem such a mapping always exists, and we can consider the unit disc $\mathbb{D}(0,1)$ as a target domain $\Omega'$ if $\Omega$ is simply connected.

Following this approach, in Section 3 we prove the Poincar\'e inequality \eqref{Poincare_22} and establish a connection between the first eigenvalue $\mu_\rho(\Omega)$ of the Laplacian with density on $\Omega$ and the first eigenvalue $\mu_{p,q}(\mathbb{D})$ of the homogeneous $(p,q)$-Laplacian on the unit disk. Namely, the following theorem holds (the domain $\Omega$ is $\alpha$-regular, if $\varphi' \in L^\alpha(\mathbb{D})$ for $\varphi: \mathbb{D} \to \Omega$, see Section 2 for details):

\begin{thm}
    Let $\Omega \subset \mathbb{R}^2$ be a simply connected bounded domain. Let also $\rho$ be a density and $1 \leq p < 2$, $2 < q \leq \frac{2p}{2-p}$. Consider a conformal mapping $\varphi: \mathbb{D} \to \Omega$. If
    $$
        K_q(\Omega, \rho, \varphi) = \Big\| \frac{\rho}{J^{2/q}_{\varphi^{-1}}} \Big\|_{L^{\frac{q}{q-2}}(\Omega)} < \infty,
    $$
    then Sobolev-Poincar\'e inequality \eqref{Poincare_22} holds with the constant 
    $$
        B^\rho_{2,2}(\Omega) \leq \pi^{\frac{2-p}{p}} B_{q,p}(\mathbb{D}) \sqrt{K_q(\Omega, \rho, \varphi)}.
    $$

    If, in addition, $\Omega$ is an $\alpha$-regular domain with $\alpha \in (\frac{4}{p}, \infty]$, and $q < \frac{2p}{2-p}$, then the embedding 
    $W^{1,2}(\Omega) \hookrightarrow L^2(\Omega, \rho)$ is compact and 
    $$
        \mu_{\rho}(\Omega) \geq \frac{\mu_{p,q}(\mathbb{D})}{2^{2p}\pi^{\frac{2(2-p)}{p}} K_q(\Omega, \rho, \varphi)}.
    $$
\end{thm}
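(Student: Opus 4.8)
The plan is to instantiate the factorization \eqref{Diag} with $\Omega'=\mathbb{D}$ and the given conformal map $\varphi$, reading everything off from two facts: the conformal invariance of the Dirichlet integral, and Hölder's inequality applied twice. Write $v=\varphi^\ast u=u\circ\varphi$. Since $\varphi$ is univalent and holomorphic, $J_\varphi=|\varphi'|^2$ a.e., so the change of variables $x=\varphi(y)$ gives the isometry $\|\nabla v\|_{L^2(\mathbb{D})}=\|\nabla u\|_{L^2(\Omega)}$ of the homogeneous spaces $L^{1,2}(\Omega)$ and $L^{1,2}(\mathbb{D})$; since $\mathbb{D}$ has finite measure $\pi$, Hölder's inequality upgrades this to $\|\nabla v\|_{L^p(\mathbb{D})}\le\pi^{\frac1p-\frac12}\|\nabla u\|_{L^2(\Omega)}$, that is, $\varphi^\ast\colon L^{1,2}(\Omega)\to L^{1,p}(\mathbb{D})$ is bounded with norm at most $\pi^{\frac1p-\frac12}\le\pi^{\frac{2-p}{p}}$.

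For the Poincar\'e inequality, first I would fix $u$ and an arbitrary constant $c$ and transport $\|u-c\|_{L^2(\Omega,\rho)}^2$ to the disk via $x=\varphi(y)$, obtaining $\int_{\mathbb{D}}|v-c|^2(\rho\circ\varphi)J_\varphi\,dy$. Hölder's inequality with the conjugate exponents $\frac q2$ and $\frac q{q-2}$ factors this as $\|v-c\|_{L^q(\mathbb{D})}^2$ times $\big(\int_{\mathbb{D}}\big((\rho\circ\varphi)J_\varphi\big)^{\frac q{q-2}}dy\big)^{\frac{q-2}q}$; changing variables back in the second factor and using $J_\varphi(\varphi^{-1}(x))=J_{\varphi^{-1}}(x)^{-1}$ turns it into $\big(\int_\Omega\big(\rho/J_{\varphi^{-1}}^{2/q}\big)^{\frac q{q-2}}\big)^{\frac{q-2}q}=K_q(\Omega,\rho,\varphi)$. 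Hence $\|u-c\|_{L^2(\Omega,\rho)}\le\sqrt{K_q}\,\|v-c\|_{L^q(\mathbb{D})}$ for every $c$; taking the infimum over $c$, then the Sobolev--Poincar\'e inequality on $\mathbb{D}$, namely $\inf_c\|v-c\|_{L^q(\mathbb{D})}\le B_{q,p}(\mathbb{D})\|\nabla v\|_{L^p(\mathbb{D})}$ (valid for $1\le p<2$ and $q\le\frac{2p}{2-p}$), and finally the bound for $\varphi^\ast$ above, gives \eqref{Poincare_22} with $B^\rho_{2,2}(\Omega)\le\pi^{\frac{2-p}{p}}B_{q,p}(\mathbb{D})\sqrt{K_q(\Omega,\rho,\varphi)}$.

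For the second part I would use \eqref{Diag} as a genuine chain: $\varphi^\ast$ into $W^{1,p}(\mathbb{D})$, the identity embedding $i\colon W^{1,p}(\mathbb{D})\to L^q(\mathbb{D})$, and $(\varphi^{-1})^\ast\colon L^q(\mathbb{D})\to L^2(\Omega,\rho)$. The embedding $i$ is \emph{compact} precisely because $q<\frac{2p}{2-p}$ (Rellich--Kondrachov on the disk); this is exactly why the strict inequality is imposed here. The last map is bounded with norm $\le\sqrt{K_q}$, by the same change of variables and Hölder estimate as above applied with $c=0$ to $w\circ\varphi^{-1}$. The first map is where $\alpha$-regularity is used: the gradient term is already controlled above, and $\varphi'\in L^\alpha(\mathbb{D})$ with $\alpha>\frac4p$ is what lets one bound $\|u\circ\varphi\|_{L^p(\mathbb{D})}$ by $\|u\|_{W^{1,2}(\Omega)}$ --- through a further change of variables, Hölder's inequality balancing $J_\varphi=|\varphi'|^2$ against the two-dimensional Sobolev embedding $W^{1,2}(\mathbb{D})\hookrightarrow L^s(\mathbb{D})$ (valid for all finite $s$), and a separate treatment of the mean-value terms so that $\varphi^\ast$ is bounded between the \emph{inhomogeneous} spaces. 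Composing the three maps shows $W^{1,2}(\Omega)\hookrightarrow L^2(\Omega,\rho)$ is compact; the spectrum of \eqref{DensLaplacian} is then discrete, the Min--Max principle applies, and $\mu_\rho(\Omega)=(B^\rho_{2,2}(\Omega))^{-2}$ by \eqref{MinMax}. Inserting the constant from the first part and comparing $B_{q,p}(\mathbb{D})$ with the first eigenvalue $\mu_{p,q}(\mathbb{D})$ of the homogeneous $(p,q)$-Laplacian on the disk --- an elementary renormalization accounting for the constant $2^{2p}$ --- yields $\mu_\rho(\Omega)\ge\frac{\mu_{p,q}(\mathbb{D})}{2^{2p}\pi^{2(2-p)/p}K_q(\Omega,\rho,\varphi)}$.

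The routine steps are the two changes of variables and the bookkeeping with Hölder exponents. The main obstacle is the boundedness of $\varphi^\ast$ between the \emph{full} Sobolev spaces $W^{1,2}(\Omega)$ and $W^{1,p}(\mathbb{D})$ --- equivalently, the control of the low-order term and of the constants in the transfer between $\Omega$ and $\mathbb{D}$: it is exactly the interplay between the Hölder exponents needed to pair $|\varphi'|$ with the disk's Sobolev embedding that leaves room to close the estimate only once $\alpha>\frac4p$.
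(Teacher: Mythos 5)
Your proof matches the paper's argument step for step: the same conformal-transfer diagram, the same Hölder-plus-change-of-variables estimate of $\|(\varphi^{-1})^\ast\|$ producing $\sqrt{K_q(\Omega,\rho,\varphi)}$, the same appeal to $\alpha$-regularity and Rellich--Kondrachov on the disk for compactness, and then \eqref{MinMax}. (Your Hölder bound $\|\varphi^\ast\|\le\pi^{1/p-1/2}=\pi^{(2-p)/(2p)}$ is exactly what the paper's Theorem~\ref{CompTh} gives at $r=2$, before being loosened to the stated $\pi^{(2-p)/p}$.)

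One caveat, which you share with the paper: the passage from $B_{q,p}(\mathbb{D})$ to $\mu_{p,q}(\mathbb{D})$ that you call ``an elementary renormalization accounting for $2^{2p}$'' does not close as written. The relation $B^p_{q,p}(\mathbb{D})\le 2^p/\mu_{p,q}(\mathbb{D})$ yields $B^{-2}_{q,p}(\mathbb{D})\ge \mu_{p,q}(\mathbb{D})^{2/p}/4$, with exponent $2/p$ on $\mu_{p,q}(\mathbb{D})$, not $\mu_{p,q}(\mathbb{D})/2^{2p}$; the latter would require an additional lower bound on $\mu_{p,q}(\mathbb{D})$ that neither you nor the paper supplies. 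You should either carry the exponent $2/p$ through to the final display or justify the comparison $\mu_{p,q}(\mathbb{D})^{2/p}/4\ge\mu_{p,q}(\mathbb{D})/2^{2p}$ separately.
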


Under additional assumptions on $\Omega$ (when $\Omega$ is a $K$-quasidisc), we can prove a lower bound for $\mu_{\rho}(\Omega)$ that does not involve the Jacobian of a conformal mapping. Moreover, it is shown that by changing the mass density, we can make $\mu_{\rho}(\Omega)$ as large as we wish.

The main results of the article contains in Section 4. These results are a refined version of the propositions of Section 3 in the sense of an optimal Sobolev embedding of the space $W^{1,2}(\mathbb{D})$ into the Orlicz space of exponential type. But, unfortunately, due to the use of Orlicz settings, the conditions in this case are much more involved.

\begin{thm}
    Let $\Omega \subset \mathbb{R}^2$ be a simply connected bounded domain. Let also $\rho$ be a density and Young functions $M$ and $\Phi$ be defined as $M(u) = e^{u^2}-1$ and $\Phi(u) = u\log(u+e)$. Consider a conformal mapping $\varphi: \mathbb{D} \to \Omega$. If
    $$
        K_{\Phi}(\Omega, \rho, \varphi)  = \Big\|\frac{\rho}{J_{\varphi^{-1}}\Phi^{-1}(\frac{1}{J_{\varphi^{-1}}})}\Big\|_{L^\Phi(\Omega)} < \infty,
    $$
    then Sobolev-Poincar\'e inequality \eqref{Poincare_22} holds with the constant
    $$
        B^\rho_{2,2}(\Omega) \leq 2\sqrt{3} B_{M,2}(\mathbb{D}) \sqrt{K_{\Phi}(\Omega, \rho, \varphi)}.
    $$
    
    If, in addition, $\Omega$ is an $\alpha$-regular domain with $\alpha \in (2, \infty]$, and $M_\varepsilon \prec M$ ($\Phi \prec \Phi_\varepsilon$), then the embedding 
    $W^{1,2}(\Omega) \hookrightarrow L^2(\Omega, \rho)$ is compact and 
    $$
        \mu_{\rho}(\Omega) \geq \frac{1}{18 (B_{M_\varepsilon,2}(\mathbb{D}))^2 K_{\Phi_\varepsilon}(\Omega, \rho, \varphi)}.
    $$
\end{thm}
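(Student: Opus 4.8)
The plan is to run the composition--operator scheme of diagram~\eqref{Diag}, taking the unit disk $\mathbb{D}$ as the model domain and the sharp Moser--Trudinger embedding $W^{1,2}(\mathbb{D})\hookrightarrow L^{M}(\mathbb{D})$, $M(u)=e^{u^{2}}-1$, in the role of the compact Sobolev embedding. The one genuinely new point compared with the preceding theorem is that the plain Hölder inequality used there to land in $L^{2}(\Omega,\rho)$ is now replaced by a Hölder inequality in Orlicz spaces between $M$ and a Young function of $L\log L$ type: since $M(u)=N(u^{2})$ with $N(t)=e^{t}-1$, and the complementary function of $N$ grows like $t\log t$, the relevant target space on a set of finite measure is the one generated by $\Phi(u)=u\log(u+e)$, which is exactly what produces $K_{\Phi}$.

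For the Poincar\'e inequality I would proceed in three steps. (i) Given $u\in W^{1,2}(\Omega)$, $u\neq0$, set $v=u\circ\varphi$; conformality of $\varphi$ makes $\varphi^{\ast}$ an isometry of the homogeneous spaces $L^{1,2}$, so $\|\nabla v\|_{L^{2}(\mathbb{D})}=\|\nabla u\|_{L^{2}(\Omega)}$. (ii) Apply the sharp Sobolev--Poincar\'e--Orlicz inequality on the disk, $\inf_{c\in\mathbb{R}}\|v-c\|_{L^{M}(\mathbb{D})}\le B_{M,2}(\mathbb{D})\,\|\nabla v\|_{L^{2}(\mathbb{D})}$. (iii) Return to $\Omega$: for fixed $c$, change variables $x=\varphi(y)$ to get $\int_{\Omega}|v-c|^{2}\rho\,dx=\int_{\mathbb{D}}|v-c|^{2}\,(\rho\circ\varphi)J_{\varphi}\,dy$, then use the Orlicz Hölder inequality together with the identity $\big\||v-c|^{2}\big\|_{L^{N}(\mathbb{D})}=\|v-c\|_{L^{M}(\mathbb{D})}^{2}$ to bound this by a constant times $\|v-c\|_{L^{M}(\mathbb{D})}^{2}\,\|(\rho\circ\varphi)J_{\varphi}\|_{L^{\Phi}(\mathbb{D})}$. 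The last factor is then identified with $K_{\Phi}$: undoing the change of variables inside the Luxemburg norm rewrites $\int_{\mathbb{D}}\Phi\big((\rho\circ\varphi)J_{\varphi}/\lambda\big)\,dy$ as $\int_{\Omega}\Phi\big(\rho/(\lambda J_{\varphi^{-1}})\big)\,J_{\varphi^{-1}}\,dx$, and a convexity--monotonicity lemma for $\Phi$, of the shape $t\,\Phi(s)\le\Phi\big(s/(t\,\Phi^{-1}(1/t))\big)$ with $t=J_{\varphi^{-1}}(x)$, absorbs the weight $J_{\varphi^{-1}}$ into the argument and leaves exactly $\big\|\rho/\!\big(J_{\varphi^{-1}}\Phi^{-1}(1/J_{\varphi^{-1}})\big)\big\|_{L^{\Phi}(\Omega)}=K_{\Phi}$. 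Collecting the constants from steps (i)--(iii) gives $B^{\rho}_{2,2}(\Omega)\le 2\sqrt{3}\,B_{M,2}(\mathbb{D})\sqrt{K_{\Phi}(\Omega,\rho,\varphi)}$.

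For the compactness and the eigenvalue bound I would first observe that the embedding into the borderline space $L^{M}(\mathbb{D})$ itself is not compact (a concentrating Moser sequence tends weakly but not strongly to $0$ there), so $M$ must be replaced by a strictly slower $M_{\varepsilon}\prec M$, for which $W^{1,2}(\mathbb{D})\hookrightarrow L^{M_{\varepsilon}}(\mathbb{D})$ is compact; steps (i)--(iii) then repeat verbatim with $M_{\varepsilon}$ and its complementary function $\Phi_{\varepsilon}\succ\Phi$. The role of $\alpha$-regularity with $\alpha\in(2,\infty]$ is to make this replacement harmless: the integrability $\varphi'\in L^{\alpha}(\mathbb{D})$ (equivalently $J_{\varphi}\in L^{\alpha/2}(\mathbb{D})$) controls, through an extra Hölder splitting, the gap between $\Phi$ and $\Phi_{\varepsilon}$, so that for $\varepsilon$ small enough in terms of $\alpha$ one still has $K_{\Phi_{\varepsilon}}(\Omega,\rho,\varphi)<\infty$ and $(\varphi^{-1})^{\ast}\colon L^{M_{\varepsilon}}(\mathbb{D})\to L^{2}(\Omega,\rho)$ bounded. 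Plugging the isometry $\varphi^{\ast}$, the compact embedding $W^{1,2}(\mathbb{D})\hookrightarrow L^{M_{\varepsilon}}(\mathbb{D})$ and the bounded $(\varphi^{-1})^{\ast}$ into diagram~\eqref{Diag} yields compactness of $W^{1,2}(\Omega)\hookrightarrow L^{2}(\Omega,\rho)$, hence discreteness of the spectrum of~\eqref{DensLaplacian} and the identity $\mu_{\rho}(\Omega)=1/(B^{\rho}_{2,2}(\Omega))^{2}$ from~\eqref{MinMax}; combined with the perturbed version of the Poincar\'e estimate, whose constant the same bookkeeping bounds by $C$ with $C^{2}=18$ (a loss over the $12$ of the first part caused by the perturbation), this gives $\mu_{\rho}(\Omega)\ge 1/\big(18\,(B_{M_{\varepsilon},2}(\mathbb{D}))^{2}\,K_{\Phi_{\varepsilon}}(\Omega,\rho,\varphi)\big)$.

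The hard part will be step (iii): justifying the change of variables inside the Luxemburg norm and, above all, the Young-function lemma that absorbs the Jacobian weight $J_{\varphi^{-1}}$ into the argument of $\Phi$ while producing the factor $\Phi^{-1}(1/J_{\varphi^{-1}})$ --- the required convexity inequality has to hold uniformly over the whole range of values of $J_{\varphi^{-1}}$ (both $\le1$ and $\ge1$), and it is precisely this that fixes the normalisations $M(u)=e^{u^{2}}-1$ and $\Phi(u)=u\log(u+e)$. A secondary difficulty is checking, in the second part, that $\alpha$-regularity really does upgrade $K_{\Phi}<\infty$ to $K_{\Phi_{\varepsilon}}<\infty$ and yields boundedness of the perturbed composition operator, given that $M_{\varepsilon}$ and $\Phi_{\varepsilon}$ are specified only through the relations $M_{\varepsilon}\prec M$ and $\Phi\prec\Phi_{\varepsilon}$.
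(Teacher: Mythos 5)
Your high-level plan matches the paper's: change variables by the conformal map, use the sharp embedding $W^{1,2}(\mathbb{D})\hookrightarrow L^{M}(\mathbb{D})$ with $M(u)=e^{u^{2}}-1$, identify $\big\||v-c|^{2}\big\|_{L^{N}(\mathbb{D})}=\|v-c\|_{L^{M}(\mathbb{D})}^{2}$ with $N(t)=e^{t}-1$, and close with an Orlicz H\"older inequality against an $L\log L$-type weight. But the single lemma on which your step (iii) turns is wrong as you state it, and the mechanism that replaces it in the paper is different in kind. The inequality $t\,\Phi(s)\le\Phi\big(s/(t\,\Phi^{-1}(1/t))\big)$ fails for $t>1$: since $\Phi(u)=u\log(u+e)\sim u$ as $u\to0$, one has $t\,\Phi^{-1}(1/t)\to1$ as $t\to\infty$, so the right-hand side tends to $\Phi(s)$ while the left grows like $t\,\Phi(s)$. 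What the paper actually uses is the \emph{global} $\Delta'$-property of $\Phi$, namely $\Phi(ab)\le2\Phi(a)\Phi(b)$ for all $a,b\ge0$ --- this is the entire reason the normalisation $u\log(u+e)$ is introduced in place of the exact complementary function $(1+u)\log(1+u)-u$, which satisfies $\Delta'$ only for large arguments. Taking $a=s/\Phi^{-1}(1/t)$, $b=\Phi^{-1}(1/t)$ gives $t\,\Phi(s)\le2\,\Phi\big(s/\Phi^{-1}(1/t)\big)$: no spurious $t$, but an unavoidable factor $2$. Because of that $2$ one cannot finish by simply checking the Luxemburg integral condition at $\lambda=K_{\Phi}$; the paper instead passes to the dual Orlicz norm $\|\cdot\|_{L^{(\Phi)}}$, represents it as a supremum over $g$ with $\int_{\mathbb{D}}\Phi^{*}(g)\le1$, and applies Young's inequality $\int|hg|\le\int\Phi(h)+\int\Phi^{*}(g)\le2\cdot1+1=3$ to absorb the $\Delta'$-constant. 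These two devices --- the $\Delta'$-condition for $\Phi$ and the dual-norm representation --- are the substance of step (iii) and are missing from your sketch.

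Two smaller points also do not match the paper. The $\alpha$-regularity hypothesis is invoked only to make $\varphi^{*}:W^{1,2}(\Omega)\to W^{1,2}(\mathbb{D})$ bounded (Corollary~\ref{CompW}), so that the compact embedding $W^{1,2}(\mathbb{D})\hookrightarrow L^{M_{\varepsilon}}(\mathbb{D})$ fits into the diagram; it is not used, as you suggest, to upgrade $K_{\Phi}<\infty$ to $K_{\Phi_{\varepsilon}}<\infty$ --- the theorem tacitly assumes the latter. And the $18$ in the eigenvalue bound is simply the square of the Poincar\'e constant with $M_{\varepsilon}$, $\Phi_{\varepsilon}$ substituted, produced by the same $\sqrt{2}$-and-$3$ bookkeeping; it is not a separate loss ``caused by the perturbation'' layered on top of the constant from the first half.
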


As in $L^p$-case, when $\Omega$ is a $K$-quasidisk, a refined version of the lower estimate independent on $J_{\varphi^{-1}}$ is also proved in Orlicz settings.

\section{Preliminaries}

\subsection{Convex functions}

Here we recall the basic facts of the theory of Young functions, which underlies the Orlicz space theory. For detailed theories of Young functions and Orlicz spaces, as well as proofs, see, for example, \cite{KR, RR}.

A non-decreasing left-continuous convex function $M: [0,\infty) \to [0,\infty]$ is a \textit{Young function} if
    $$
        M(0) = \lim_{u \to 0^+} M(u) = 0 \quad \text{and} \quad \lim_{u \to \infty} M(u) = \infty.
    $$

Given a Young function $M$, the \textit{complementary function} $M^*: [0,\infty) \to [0,\infty]$ is defined by
    $$
        M^*(v) = \sup\{uv - M(u) : u \geq 0\}.
    $$
The complementary function to a Young function is also a Young function and $(M^*)^* = M$. For $M$ and $M^*$ the \textit{Young inequality} holds:
    \begin{equation}\label{YoungIneq}
        uv \leq M(u) + M^*(v).
    \end{equation}

The crucial role in the theory of Orlicz spaces is played by growth conditions for Young functions. We will say that a function $M_1$ has an essentially greater growth and will write
    $$
        M_1 \prec M_2
    $$
if for all $k>0$
    $$
        \lim\limits_{u \to \infty}\frac{M_1(ku)}{M_2(u)} = 0.
    $$
    
Two Young functions are \textit{equivalent} ($M_1 \sim M_2$) if there exist positive constants $u_1$, $u_2$ and $k_1$, $k_2$ such that
    $$
        M_1(u) \leq M_2(k_1u) \text{ for all } u \geq u_1 \text{ and } M_2(u) \leq M_1(k_2u) \text{ for all } u \geq u_2.
    $$
It is important property for us, since two equivalent Young functions define the same Orlicz space.

In this paper we will use the following two conditions for the growth of Young functions:

A Young function $M$ satisfies the \textit{$\Delta'$-condition (globally)}, $M \in \Delta'$, if there exist constants $C_{\Delta'}> 0$ and $u_0>0$ such that 
    \begin{equation}
        M(uv) \leq C_{\Delta'} M(u) M(v) \text{ for all } u, v \geq u_0 \,\, (u,v \geq 0).
    \end{equation}
If a Young function satisfies the $\Delta'$-condition then it grows not faster then power functions.

A Young function $M$ satisfies the \textit{$\nabla'$-condition (globally)}, $M \in \nabla'$, if there exist constants $C_{\nabla'}>0$ and $u_0>0$ such that 
    \begin{equation}
        M(C_{\nabla'} uv) \geq  M(u) M(v) \text{ for all } u, v \geq u_0 \,\, (u,v \geq 0).
    \end{equation}
Young functions with the $\nabla'$-condition grow faster than any power function. Moreover, if $M \in \Delta'$, then $M^* \in \nabla'$ and vice versa.

\subsection{Weighted Lebesgue and Orlicz spaces}

Let $\Omega$ be an open subset of $\mathbb{R}^2$. We will call a function $\rho: \Omega \to (0,\infty)$ a \textit{density}, if $\rho$ is continuous and is in $L^1(\Omega)$.

As usual, the \textit{weighted Lebesgue space} $L^p(\Omega, \rho)$, $1 \leq p \leq \infty$ with a density $\rho$ is a set of all (equivalence classes of) locally integrable functions $f: \Omega \to \mathbb{R}$ such that
    $$
        \|f\|_{L^p(\Omega, \rho)} = \left( \int_\Omega |f(x)|^p \rho(x) \, dx\right)^{\frac{1}{p}} < \infty.
    $$

Now, let $M$ be a Young function. The \textit{Orlicz space} $L^M(\Omega)$ is the set of all (equivalence classes of) locally integrable functions $f: \Omega \to \mathbb{R}$ such that
    $$
        \int_\Omega M(\alpha|f(x)|) \, dx < \infty
    $$
for some $\alpha>0$. When $L^M(\Omega)$ equipped with the \textit{Luxemburg norm}
    \begin{equation}
        \|f\|_{L^{M}(\Omega)}= \inf\{\lambda \in (0,\infty) : \int_\Omega M(\frac{f(x)}{\lambda}) \, dx \leq 1 \}
    \end{equation}
or with the \textit{Orlicz norm}
    \begin{equation}
        \|f\|_{L^{(M)}(\Omega)} = \sup \{ \|fg\|_{L^1(\Omega)} :  \int_\Omega M^*(g(x)) \, dx \leq 1\} = \sup \{ \|fg\|_{L^1(\Omega)} :  \|g\|_{L^{M^\ast}(\Omega)} \leq 1\}
    \end{equation}
it becomes a Banach space. Moreover,
    $$
        \|f\|_{L^{M}(\Omega)} \leq \|f\|_{L^{(M)}(\Omega)} \leq 2\|f\|_{L^{M}(\Omega)}.
    $$

The Luxemburg norm has the following property
    \begin{equation}\label{NormProp}
        \int_{\Omega} M\Big(\frac{u(x)}{\|u\|_{L^M(\Omega)}}\Big) \, dx \leq 1.
    \end{equation}
If function $M$ satisfies the $\Delta'$-condition, this inequality turns into equality.

It is well known (see, e.g. \cite{KR}) that if $M(u) = \frac{|u|^p}{p}$, $1 \leq p < \infty$, then 
    $$
        \|f\|_{L^{(M)}(\Omega)} = {p'}^{\frac{1}{p'}}\|f\|_{L^p(\Omega)}, \quad p'=\frac{p}{p-1}.
    $$

The analogue of the classical H\"older inequality in Orlicz spaces takes the following form:

\begin{prop}\label{HoldOrlprop}
    For any $f \in L^M(\Omega)$ and $g \in L^{M^*}(\Omega)$ the H\"older inequality holds:
    \begin{equation}\label{HoldOrl}
        \int_\Omega |f(x)g(x)|\, dx \leq 2\|f\|_{L^{M}(\Omega)} \|g\|_{L^{M^\ast}(\Omega)}.
    \end{equation}
\end{prop}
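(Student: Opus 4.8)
The plan is to reduce the statement to the pointwise Young inequality \eqref{YoungIneq} followed by a single integration, with the normalization property \eqref{NormProp} of the Luxemburg norm doing the bookkeeping. First I would dispose of the degenerate cases: if $\|f\|_{L^M(\Omega)}=0$ then $f=0$ almost everywhere and both sides of \eqref{HoldOrl} vanish (and likewise if $\|g\|_{L^{M^*}(\Omega)}=0$), while if either Luxemburg norm equals $+\infty$ the right-hand side is infinite and there is nothing to prove. Hence I may assume $0<\lambda_f:=\|f\|_{L^M(\Omega)}<\infty$ and $0<\lambda_g:=\|g\|_{L^{M^*}(\Omega)}<\infty$.

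For the core step, I would invoke \eqref{NormProp} for $f$ with the Young function $M$, and for $g$ with the Young function $M^*$ (which is legitimate because $M^*$ is itself a Young function, with $(M^*)^*=M$, so the same property applies). This gives
$$\int_\Omega M\!\left(\frac{|f(x)|}{\lambda_f}\right)dx \le 1 \quad\text{and}\quad \int_\Omega M^*\!\left(\frac{|g(x)|}{\lambda_g}\right)dx \le 1.$$
In particular both integrands are finite for almost every $x$, so Young's inequality \eqref{YoungIneq} may be applied pointwise with $u=|f(x)|/\lambda_f$ and $v=|g(x)|/\lambda_g$, yielding
$$\frac{|f(x)g(x)|}{\lambda_f\lambda_g} \le M\!\left(\frac{|f(x)|}{\lambda_f}\right) + M^*\!\left(\frac{|g(x)|}{\lambda_g}\right) \quad\text{for a.e. } x\in\Omega.$$
Integrating over $\Omega$ and using the two bounds above, the right-hand side is at most $1+1=2$; in particular $fg\in L^1(\Omega)$, and rearranging gives $\int_\Omega|f(x)g(x)|\,dx\le 2\lambda_f\lambda_g$, which is precisely \eqref{HoldOrl}.

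I do not expect a genuine obstacle here. The only points that merit a word of care are the measurability and finiteness issues justifying the integration of the almost-everywhere pointwise inequality — handled by the fact that the majorant $M(|f|/\lambda_f)+M^*(|g|/\lambda_g)$ is integrable with integral at most $2$ — and the already-noted observation that \eqref{NormProp} applies to $M^*$. One may additionally remark that the sharper constant $1$ (with one factor replaced by the Orlicz norm $\|\cdot\|_{L^{(M)}(\Omega)}$) follows from the same computation together with the inequality $\|\cdot\|_{L^M(\Omega)}\le\|\cdot\|_{L^{(M)}(\Omega)}$ recorded above; but the stated form with constant $2$ is exactly what the argument produces and is the version used later in the paper.
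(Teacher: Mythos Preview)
Your argument is correct and is precisely the standard proof of the Orlicz--H\"older inequality: normalize by the Luxemburg norms, apply Young's inequality \eqref{YoungIneq} pointwise, integrate, and use \eqref{NormProp} for both $M$ and $M^*$ to bound the result by $2$. The degenerate cases and the integrability of the majorant are handled cleanly.

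Note, however, that the paper does not actually supply a proof of Proposition~\ref{HoldOrlprop}: it is stated in the preliminaries as a known fact from the general theory, with \cite{KR, RR} given as references. So there is no ``paper's own proof'' to compare against; your write-up simply fills in the standard argument that those references contain.
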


\subsection{Sobolev spaces and Poincar\'e inequalities}

Further we define spaces of weakly differentiable functions. We will use the Sobolev spaces based on Lebesgue spaces only. Detailed properties of these spaces can be found in \cite{M}.

The \textit{Sobolev space} $W^{1,p}(\Omega)$, $1\leq p\leq\infty$, is a set of all (equivalence classes of) locally integrable weakly differentiable functions $f: \Omega \to \mathbb{R}$, equipped with the following norm: 
    $$
        \|f\|_{W^{1,p}(\Omega)}=\|f\|_{L^p(\Omega)} + \|\nabla f\|_{L^p(\Omega)},
    $$
where $\nabla f$ is the weak gradient of the function $f$.

The \textit{seminormed Sobolev space} $L^{1,p}(\Omega)$, $1\leq p\leq\infty$, is a set of all (equivalence classes of) locally integrable weakly differentiable functions $f: \Omega \to \mathbb{R}$, equipped with the following seminorm: 
    $$
        \|f\|_{L^{1,p}(\Omega)}=\|\nabla f\|_{L^p(\Omega)}.
    $$

In the Sobolev spaces theory, capacity plays a crucial role as an outer measure associated with Sobolev spaces \cite{M}. In accordance to this approach, elements of Sobolev spaces $W^1_p(\Omega)$ are equivalence classes up to a set of $p$-capacity zero \cite{MH72}. 

A mapping $\varphi: \Omega \to \mathbb{R}^2$ belongs to the Sobolev space $W^{1,p}(\Omega,\mathbb{R}^2)$, if its coordinate functions belong to $W^{1,p}(\Omega)$. In this case, the formal Jacobi matrix $D\varphi(x)$ and its determinant (Jacobian) $J_\varphi$ are well defined at almost all points $x \in \Omega$. We use $|D\varphi(x)|$ to denote the operator norm of $D\varphi(x)$.

We will use the Poincar\'e inequality in two forms. A bounded domain $\Omega$ is said to be $(q,p)$\textit{-Poincar\'e domain}, if for all $f \in L^{1,p}(\Omega)$ the following equivalent inequalities hold. The first one better corresponds to the Min-Max Principle and states that for $f \in L^{1,p}(\Omega)$
    \begin{equation}\label{InfPoincare_pq}
        \inf\limits_{c \in \mathbb{R}}\|f - c\|_{L^q(\Omega)} \leq \widetilde{B}_{q,p}(\Omega)\|\nabla f\|_{L^p(\Omega)}.
    \end{equation}

The second one is more suitable for estimating the exact constant in the inequality and states that
    \begin{equation}\label{AvPoincare_pq}
        \|f - f_\Omega\|_{L^q(\Omega)} \leq B_{q,p}(\Omega)\|\nabla f\|_{L^p(\Omega)},
    \end{equation}
where $f_\Omega = |\Omega|^{-1}\int_\Omega f(x) \, dx$.

These inequalities are equivalent in the sense that (see, for example, \cite{HK00})
    \begin{equation}\label{EquivPoncare}
        \inf\limits_{c \in \mathbb{R}}\|f - c\|_{L^q(\Omega)} \leq \|f - f_{\Omega}\|_{L^q(\Omega)} \leq 2 \inf\limits_{c \in \mathbb{R}}\|f - c\|_{L^q(\Omega)}.
    \end{equation}

On the unit disk $\mathbb{D}(0,1) = \mathbb{D}$ there is the following estimate of the constant $B_{q,p}(\mathbb{D})$.

\begin{thm}\cite{GU16}\label{Poincare_pq}
    If $f \in W^{1,p}(\mathbb{D})$, $1 \leq p < \infty$, then for $0 \leq \kappa = 1/p - 1/q < 1/2$ the Poincar\'e-Sobolev inequality
    \begin{equation}\label{Pest}
        \left( \int_{\mathbb{D}} |f(y) - f_{\mathbb{D}}|^q \, dy \right)^{\frac{1}{q}} \leq B_{q,p}(\mathbb{D}) \left( \int_{\mathbb{D}} |\nabla f(y)|^p \, dy \right)^{\frac{1}{p}}
    \end{equation}
    holds with the constant
    $$
    B_{q,p}(\mathbb{D}) \leq \frac{2}{\pi^{\kappa}} \Big( \frac{1-\kappa}{1/2 - \kappa} \Big)^{1-\kappa}
    $$
\end{thm}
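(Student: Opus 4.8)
\emph{Proof proposal.} By density of $C^\infty(\overline{\mathbb{D}})$ in $W^{1,p}(\mathbb{D})$ and continuity of both sides of \eqref{Pest} in the $W^{1,p}$-norm, it suffices to prove \eqref{Pest} for smooth $f$. The starting point is the classical Sobolev integral representation on the convex set $\mathbb{D}$: writing $f(x)-f_{\mathbb{D}} = |\mathbb{D}|^{-1}\int_{\mathbb{D}}(f(x)-f(y))\,dy$, expressing $f(x)-f(y)$ as a line integral of $\nabla f$ over the segment $[x,y]\subset\mathbb{D}$, substituting $z = x+t(y-x)$ and integrating in $t\in(0,1)$, one obtains for every $x\in\mathbb{D}$
\[
    |f(x)-f_{\mathbb{D}}| \;\le\; \frac{(\diam\mathbb{D})^2}{2|\mathbb{D}|}\int_{\mathbb{D}}\frac{|\nabla f(y)|}{|x-y|}\,dy \;=\; \frac{2}{\pi}\int_{\mathbb{D}}\frac{|\nabla f(y)|}{|x-y|}\,dy,
\]
convexity being used to keep the segments $[x,y]$ inside $\mathbb{D}$; the emerging chord-length factors are bounded by $\diam\mathbb{D}=2$.

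It then remains to estimate the $L^q(\mathbb{D})$-norm of the integral operator $Tg(x)=\int_{\mathbb{D}}|x-y|^{-1}g(y)\,dy$ applied to $g=|\nabla f|\ge 0$, as an operator $L^p(\mathbb{D})\to L^q(\mathbb{D})$. For this I would use the Young-type inequality for integral operators (a consequence of Minkowski's integral inequality, H\"older's inequality and Riesz--Thorin interpolation): if $\sup_x\|k(x,\cdot)\|_{L^r}\le A$ and $\sup_y\|k(\cdot,y)\|_{L^r}\le A$ with $\tfrac1p+\tfrac1r=1+\tfrac1q$, then $\|T\|_{L^p\to L^q}\le A$. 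With the symmetric kernel $k(x,y)=|x-y|^{-1}$ this forces $\tfrac1r = 1+\tfrac1q-\tfrac1p = 1-\kappa$, i.e. $r=\tfrac1{1-\kappa}$ (and the two suprema coincide by symmetry), so everything reduces to computing $A = \sup_{x\in\mathbb{D}}\big(\int_{\mathbb{D}}|x-y|^{-r}\,dy\big)^{1/r}$; note that $r<2$, which is exactly the hypothesis $\kappa<\tfrac12$, is what makes this finite.

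The crucial step is that this supremum is attained at the centre. Since $|x-y|<2$ for all $x,y\in\mathbb{D}$, for $x\in\mathbb{D}$ we may write $\int_{\mathbb{D}}|x-y|^{-r}\,dy = (\chi_{\mathbb{D}}\ast K)(x)$ with $K(z)=|z|^{-r}\chi_{B(0,2)}(z)$. Both $\chi_{\mathbb{D}}$ and $K$ are radially symmetric and radially non-increasing, hence (via the layer-cake decomposition and the monotonicity of $|B(0,a)\cap B(x,b)|$ in $|x|$) so is their convolution; therefore the maximum over $\mathbb{D}$ occurs at $0$, where it equals $\int_{\mathbb{D}}|y|^{-r}\,dy = 2\pi\int_0^1 t^{1-r}\,dt = \tfrac{2\pi}{2-r}$. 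This is precisely where it matters to argue carefully: replacing the centred integral by the cruder bound $\int_{B(0,\diam\mathbb{D})}|z|^{-r}\,dz$ would cost a factor $2^{1-2\kappa}\in(1,2]$ and would not yield the stated constant. I expect this centred-supremum lemma to be the only genuinely nontrivial ingredient.

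Putting the pieces together, $B_{q,p}(\mathbb{D}) \le \tfrac2\pi A = \tfrac2\pi\big(\tfrac{2\pi}{2-r}\big)^{1/r}$. Substituting $r=\tfrac1{1-\kappa}$, so that $\tfrac1r=1-\kappa$ and $2-r = \tfrac{1-2\kappa}{1-\kappa}$, gives $\tfrac2\pi(2\pi)^{1-\kappa}\big(\tfrac{1-\kappa}{1-2\kappa}\big)^{1-\kappa} = 2^{2-\kappa}\pi^{-\kappa}\big(\tfrac{1-\kappa}{1-2\kappa}\big)^{1-\kappa}$, which equals $\tfrac{2}{\pi^\kappa}\big(\tfrac{1-\kappa}{1/2-\kappa}\big)^{1-\kappa}$ since $\tfrac{1-\kappa}{1/2-\kappa}=\tfrac{2(1-\kappa)}{1-2\kappa}$. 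As sanity checks, at $\kappa=0$ the estimate reduces to the Schur-test bound $\|f-f_{\mathbb{D}}\|_p\le 4\|\nabla f\|_p$, while the constant blows up as $\kappa\to\tfrac12^-$, reflecting the failure at the borderline Sobolev exponent. Apart from the lemma above, the remaining work is careful bookkeeping of the constants in the representation formula and in the Young-type inequality.
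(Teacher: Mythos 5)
Your proposal is correct and reproduces the stated constant exactly. The paper does not prove this theorem itself but cites it from \cite{GU16}, which derives it in essentially the same way you do: the Sobolev integral representation on a convex set gives $|f(x)-f_{\mathbb D}|\le \frac{2}{\pi}\int_{\mathbb D}|\nabla f(y)|\,|x-y|^{-1}\,dy$, with $(\diam\mathbb{D})^2/(2|\mathbb{D}|)=2/\pi$ as the prefactor, and then one needs an $L^p\to L^q$ bound for the potential with kernel $|x-y|^{-1}$. The only point of departure is how that potential bound is obtained. You prove it via the generalized Young inequality (Riesz--Thorin interpolation between the $L^1\to L^r$ and $L^{r'}\to L^\infty$ endpoints) and a centred-supremum lemma identifying $\sup_{x\in\mathbb{D}}\int_{\mathbb{D}}|x-y|^{-r}\,dy$ with its value at $x=0$ via a layer-cake convolution argument. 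The route behind \cite{GU16} is Gilbarg--Trudinger's Lemma~7.12, which proves the same estimate directly by a three-factor H\"older split of $k(x,y)g(y)$ (no interpolation needed), and replaces your centred-supremum lemma with the bathtub bound $\int_\Omega|x-y|^{-r}\,dy\le\int_{B_R(x)}|x-y|^{-r}\,dy$ where $|B_R|=|\Omega|$ — an estimate valid for any finite-measure $\Omega$ that, for $\Omega=\mathbb{D}$, gives the same value $2\pi/(2-r)$. Your convolution argument is correct but is doing more work than is needed: you only require the value at the centre as an \emph{upper} bound, and the equal-measure ball comparison hands that over immediately without analysing the monotonicity of $\chi_{\mathbb{D}}*(|\cdot|^{-r}\chi_{B(0,2)})$. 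The remaining bookkeeping — the Young-inequality balance forcing $1/r=1-\kappa$, and $(2/\pi)\cdot\bigl(2\pi/(2-r)\bigr)^{1/r}=\frac{2}{\pi^{\kappa}}\bigl(\frac{1-\kappa}{1/2-\kappa}\bigr)^{1-\kappa}$ — checks out.
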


\subsection{Conformal mappings and composition operators}

Our main object is the first eigenvalue $\mu_\rho(\Omega)$ of the Laplace operator with density. As it was pointed out in the Introduction, it is natural to consider changes of variables generating by conformal mappings $\varphi: \mathbb{D} \to \Omega$. By the Riemann mapping theorem, for simply connected bounded domain $\Omega$ such mapping always exists, and hence we can use it for the approach based on the diagram \eqref{Diag}. 

Conformal mappings $\varphi: \mathbb{D} \to \Omega$ possess the Luzin $N$ and $N^{-1}$-properties and the following change of variable formulas hold:
\begin{equation}\label{CangeDir}
    \int_{\Omega} f(x) \, dx = \int_{\mathbb{D}} f(\varphi(y)) J_\varphi(y) \, dy,
\end{equation}
and for the inverse mapping $\varphi^{-1}: \Omega \to \mathbb{D}$:
\begin{equation}\label{CangeInv}
    \int_{\Omega} f(\varphi^{-1}(x)) J_{\varphi^{-1}}(x) \, dx = \int_{\mathbb{D}} f(y) \, dy,
\end{equation}

Therefore, Jacobians of the direct and inverse mappings are related in the following way
$$
    \int_{\Omega} \frac{1}{J_{\varphi^{-1}}(x)} \, dx = \int_{\Omega} J_{\varphi}(\varphi^{-1}(x)) \, dx.
$$

The first assumption on a domain $\Omega$ that we will use is the so-called $\alpha$-regularity of a domain. Let $\Omega \subset \mathbb{R}^2$ be a simply connected domain. We say that $\Omega$ is a \textit{conformal $\alpha$-regular domain}, $\alpha > 2$, if there exists a conformal mapping $\varphi: \mathbb{D} \to \Omega$ such that
$$
    \int_{\mathbb{D}} |D\varphi(x)|^{\alpha} \, dx = \int_{\mathbb{D}} (J_{\varphi}(x))^{\frac{\alpha}{2}} \, dx < \infty.
$$

This property was first introduced in \cite{BGU15}, see also \cite{GU16} and references therein. Note that the power $\alpha$ for this property does not depend on the choice of the conformal mapping, but depends on the hyperbolic geometry of $\Omega$ only.
 
The estimates of the Neumann Laplacian eigenvalues $\mu_\rho(\Omega)$, which are independent of the Jacobian $J_{\varphi}$, can be obtained with the help of the inverse H\"older inequality. This inequality was proved in \cite{GPU18} for the case, when $\Omega$ is a $K$-quasidisc.

A homeomorphism $\varphi: \Omega \to \Omega'$, $\Omega, \Omega' \subset \mathbb{R}^2$, is called \cite{A66} a $K$\textit{-quasiconformal} if it preserves the orientation, belongs to the Sobolev space $W^{1,2}_{\loc}(\Omega)$ and such that
$$
    |D \varphi(x)|^2 \leq K |J_{\varphi}(x)| \quad \text{a.e. in } \Omega.
$$

A domain $\Omega \subset \mathbb{R}^2$ is called a $K$\textit{-quasidisc} if it is an image of the unit disk $\mathbb{D}$ under a $K$-quasiconformal homeomorphism of the $\mathbb{R}^2$ onto itself. For such domains the following theorem holds.

\begin{thm}\cite{GPU18}\label{Jacobianest}
    Let $\Omega \subset \mathbb{R}^2$ be a $K$-quasidisc and $\varphi: \mathbb{D} \to \Omega$ be a conformal mapping. Suppose that $2 < \alpha < \frac{2K^2}{k^2-1}$. Then
    \begin{equation}\label{Jest}
        \left( \int_{\mathbb{D}} (J_{\varphi}(x))^{\frac{\alpha}{2}} \, dx \right)^{\frac{2}{\alpha}} \leq C_{J}(\alpha, K, |\Omega|),
    \end{equation}
    where
    $$
        C_{J}(\alpha, K, |\Omega|) = \frac{C^2_\alpha K^2 \pi^{\frac{2}{\alpha}-1}}{4} \exp\Big\{ \frac{K^2 \pi^2(2+\pi^4)^2}{2 \log 3} \Big\} |\Omega|,
    $$
    and
    $$
        C_\alpha = \frac{10^6}{[(\alpha-1)(1-\nu)]^{1/\alpha}}, \quad \nu = 10^{4\alpha}\frac{\alpha-2}{\alpha-1}(24 \pi^2K^2)^{\alpha}.
    $$
\end{thm}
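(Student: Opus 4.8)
Since Theorem~\ref{Jacobianest} is quoted from \cite{GPU18}, I indicate only the strategy one would follow. The starting point is that, by the change of variables \eqref{CangeDir}, $\int_{\mathbb{D}}J_\varphi(x)\,dx=|\Omega|$, so \eqref{Jest} is exactly a \emph{reverse H\"older inequality} for the conformal Jacobian $J_\varphi$ over the whole disk, with exponent $\alpha/2$ and a constant forced to be linear in $|\Omega|$. Thus the plan is: (i) transfer the problem to planar quasiconformal theory; (ii) invoke a quantitative reverse H\"older / $A_\infty$ inequality for quasiconformal Jacobians, with the sharp range of exponents and explicit constants in terms of $K$; (iii) use the normalization $\int_{\mathbb{D}}J_\varphi=|\Omega|$ to land on \eqref{Jest}.

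For (i): as $\Omega$ is a $K$-quasidisc, $\Omega=\Psi(\mathbb{D})$ for some $K$-quasiconformal $\Psi\colon\mathbb{R}^2\to\mathbb{R}^2$; then $h=\Psi^{-1}\circ\varphi\colon\mathbb{D}\to\mathbb{D}$ is $K$-quasiconformal, reflecting $h$ across $\partial\mathbb{D}$ produces a $K$-quasiconformal self-map $\widetilde h$ of $\mathbb{R}^2$, and $\widetilde\varphi:=\Psi\circ\widetilde h$ is a $K^2$-quasiconformal homeomorphism of $\mathbb{R}^2$ extending $\varphi$, with $J_{\widetilde\varphi}=J_\varphi$ on $\mathbb{D}$ (where $\widetilde\varphi$ is conformal). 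For (ii): for a planar $K^2$-quasiconformal map the Jacobian belongs to $A_\infty$, equivalently $\log J_{\widetilde\varphi}\in\BMO(\mathbb{R}^2)$, and what is needed is the \emph{explicit} bound on $\|\log J_{\widetilde\varphi}\|_{\BMO}$ in terms of $K$; it is this bound, extracted from the sharp higher-integrability / area-distortion estimates of Astala and their quantitative refinements, that produces the factor $\exp\{K^2\pi^2(2+\pi^4)^2/(2\log 3)\}$ together with the powers $C_\alpha$ and $1-\nu$ appearing in $C_J$. Restricting to $\mathbb{D}$ and writing $b=\log J_\varphi$, the John--Nirenberg inequality gives
\[
\frac{1}{\pi}\int_{\mathbb{D}}\exp\!\big(s(b-b_{\mathbb{D}})\big)\,dx\le\frac{c_1}{1-c_2\,s\,\|b\|_{\BMO(\mathbb{D})}},\qquad c_2\,s\,\|b\|_{\BMO(\mathbb{D})}<1,
\]
and with $s=\alpha/2$ the admissibility condition reproduces exactly the range $2<\alpha<\frac{2K^2}{K^2-1}$ dictated by Astala's sharp integrability exponent $\frac{K^2}{K^2-1}$ for $K^2$-quasiconformal maps; the blow-up of the right-hand side as $\alpha$ approaches the endpoint accounts for the factors $(\alpha-1)(1-\nu)$ in the denominator of $C_\alpha$.

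For (iii): by Jensen's inequality and \eqref{CangeDir}, $b_{\mathbb{D}}=\frac{1}{\pi}\int_{\mathbb{D}}\log J_\varphi\,dx\le\log\!\big(\frac{1}{\pi}\int_{\mathbb{D}}J_\varphi\,dx\big)=\log(|\Omega|/\pi)$, hence $e^{(\alpha/2)b_{\mathbb{D}}}\le(|\Omega|/\pi)^{\alpha/2}$. Multiplying the displayed estimate by $\pi\,e^{(\alpha/2)b_{\mathbb{D}}}$ yields $\int_{\mathbb{D}}J_\varphi^{\alpha/2}\,dx\le(|\Omega|/\pi)^{\alpha/2}\cdot\frac{\pi c_1}{1-c_2(\alpha/2)\|b\|_{\BMO(\mathbb{D})}}$, and raising to the power $2/\alpha$ gives a bound of the form $\frac{|\Omega|}{\pi}\cdot(\text{explicit constant})^{2/\alpha}$, which has precisely the structure of $C_J(\alpha,K,|\Omega|)$. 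The routine parts are the reflection construction in (i), the Jensen step, and the bookkeeping in (iii); the genuine difficulty --- and the reason the result is imported rather than reproved --- is the quantitative input of (ii): an \emph{explicit} reverse H\"older / $\BMO$-norm bound for Jacobians of planar quasiconformal maps, sharp in the exponent, which is what forces the elaborate constant $C_J$.
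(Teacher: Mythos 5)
This theorem is imported verbatim from \cite{GPU18} and the present paper contains no proof of it, so there is no in-paper argument to compare against; the useful question is whether your sketch matches the route taken in the cited reference. Broadly it does: the proof in \cite{GPU18} (and its predecessors) indeed proceeds by (a) extending $\varphi$ to a quasiconformal self-map of the plane using the quasidisc hypothesis, (b) invoking the reverse H\"older / $A_\infty$ property of the Jacobian of a planar quasiconformal map with explicit constants, and (c) normalizing via $\int_{\mathbb{D}}J_\varphi=|\Omega|$. Your identification of the extension as $K^2$-quasiconformal and your explanation of why $\alpha/2<K^2/(K^2-1)$ is the natural integrability threshold are both correct.

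One place where your sketch is imprecise is the claim that the John--Nirenberg inequality, applied with an appropriate constant $c_2$, ``reproduces exactly'' the Astala range $2<\alpha<\frac{2K^2}{K^2-1}$. A generic John--Nirenberg estimate for $\log J_\varphi$ cannot yield the sharp exponent $K^2/(K^2-1)$; that endpoint is Astala's area-distortion theorem, a separate and much deeper input. Correspondingly, the explicit constant $C_J$ in the statement does \emph{not} remain finite on the entire stated range: $C_\alpha$ requires $\nu<1$, and since $\nu=10^{4\alpha}\frac{\alpha-2}{\alpha-1}(24\pi^2K^2)^\alpha$ this forces $\alpha$ to lie in a much smaller neighbourhood of $2$ than $\alpha<\frac{2K^2}{K^2-1}$ would allow. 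In other words, the sharp exponent range and the explicit constant come from two logically independent ingredients (Astala's theorem on one hand, a quantitative but non-sharp Gehring/John--Nirenberg iteration on the other), and your sketch conflates them. This is a cosmetic issue for an overview of an imported theorem, but it is worth keeping the two inputs distinct if you ever need to reconstruct the argument in detail.

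Finally, two small technical remarks on step (i): the reflection of $h=\Psi^{-1}\circ\varphi$ across $\partial\mathbb{D}$ tacitly uses that a $K$-quasiconformal self-map of $\mathbb{D}$ extends homeomorphically to $\overline{\mathbb{D}}$, and that conjugation by the anti-conformal inversion $z\mapsto 1/\bar z$ preserves the dilatation; both are standard but should be cited. Your Jensen step in (iii) is correct as written.
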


Further we consider composition operators on Sobolev spaces. Composition operators on Lebesgue and Orlicz spaces can be defined in the same way.

Let $\Omega$ and $\Omega'$ be domains in $\mathbb{R}^2$. Then a homeomorphism $\varphi: \Omega' \to \Omega$ generates a bounded composition operator 
$$
    \varphi^{\ast}: L^{1,p}(\Omega) \to L^{1,q}(\Omega') \quad 1\leq q\leq p\leq\infty,
$$
by the composition rule $\varphi^{\ast}(f)=f\circ\varphi$, if for any function $f \in L^{1,p}(\Omega)$, the composition $\varphi^{\ast}(f)\in L^{1,q}(\Omega')$ is defined quasi-everywhere in $\Omega'$ and there exists a constant $K_{p,q}(\varphi;\Omega')<\infty$ such that 
$$
    \|\varphi^{\ast}(f)\|_{L^{1,q}(\Omega')} \leq K_{p,q}(\varphi;\Omega')\|f\|_{L^{1,p}(\Omega)}.
$$

It is well-known (e.g. \cite{GU16}), that conformal mappings $\varphi: \mathbb{D} \to \Omega$ generates an isometry of Sobolev spaces $L^{1,2}(\Omega)$ and $L^{1,2}(\mathbb{D})$. Therefore, for the conformal mapping $\varphi: \mathbb{D} \to \Omega$, the composition operator 
$$
    \varphi^{\ast}: L^{1,2}(\Omega) \to L^{1,2}(\mathbb{D})
$$
is bounded and the norm $\|\varphi^\ast\|=1$.

For the general case the following theorems are true (see \cite{U93, VU98} for the case of higher dimensions and more general classes of mappings).
\begin{thm}\cite{GU12}\label{CompTh} 
    A conformal mapping $\varphi: \Omega' \to \Omega$ generates a bounded composition operator 
    $$
        \varphi^{\ast}: L^{1,r}(\Omega) \to L^{1,p}(\Omega'), \quad 1 \leq p < r < \infty 
    $$
    if and only if  
    $$
        K_{r,p}(\varphi;\Omega') = \int_{\Omega'} |D\varphi(y)|^{\frac{(r-2)p}{r-p}} \, dy < \infty
    $$
    The norm of the operator $\varphi^\ast$ has the upper bound $\|\varphi^\ast\| \leq (K_{r,p}(\varphi;\Omega'))^{\frac{r-p}{rp}}$.
\end{thm}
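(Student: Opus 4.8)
The plan is to prove the two implications separately. First, suppose $K_{r,p}(\varphi;\Omega')<\infty$. A conformal mapping of planar domains is a smooth diffeomorphism with $J_{\varphi}>0$ possessing the Luzin $N$ and $N^{-1}$ properties, so for $f\in L^{1,r}(\Omega)$ the composition $\varphi^{\ast}f=f\circ\varphi$ is a well-defined Sobolev function with $\nabla(\varphi^{\ast}f)(y)=D\varphi(y)^{T}(\nabla f)(\varphi(y))$ a.e.; moreover planar conformality means $D\varphi(y)$ is a positive scalar times a rotation, hence $|D\varphi(y)^{T}\xi|=|D\varphi(y)|\,|\xi|$ for every vector $\xi$ and $J_{\varphi}(y)=|D\varphi(y)|^{2}$. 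Therefore, changing variables by $x=\varphi(y)$ (so $dy=|D\varphi(y)|^{-2}\,dx$),
$$
\|\varphi^{\ast}f\|_{L^{1,p}(\Omega')}^{p}=\int_{\Omega'}|D\varphi(y)|^{p}\,|(\nabla f)(\varphi(y))|^{p}\,dy=\int_{\Omega}|D\varphi(\varphi^{-1}(x))|^{p-2}\,|\nabla f(x)|^{p}\,dx.
$$
Applying H\"older's inequality with the conjugate exponents $r/p$ and $r/(r-p)$ (this is where $p<r$ is used), then changing variables back and invoking the exponent identity $\tfrac{(p-2)r}{r-p}+2=\tfrac{p(r-2)}{r-p}$, the right-hand side is bounded by $\bigl(\int_{\Omega'}|D\varphi(y)|^{p(r-2)/(r-p)}\,dy\bigr)^{(r-p)/r}\|\nabla f\|_{L^{r}(\Omega)}^{p}=K_{r,p}(\varphi;\Omega')^{(r-p)/r}\|\nabla f\|_{L^{r}(\Omega)}^{p}$. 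This is both the asserted boundedness and the norm bound $\|\varphi^{\ast}\|\le K_{r,p}(\varphi;\Omega')^{(r-p)/(rp)}$.

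For the converse, assume $\varphi^{\ast}\colon L^{1,r}(\Omega)\to L^{1,p}(\Omega')$ is bounded with norm $\le C$; I would test the operator on capacitary extremals. Given a condenser $(F_{0},F_{1})$ in $\Omega$, its $r$-capacity extremal $f$ pulls back to a function $f\circ\varphi$ admissible for the condenser $(\varphi^{-1}F_{0},\varphi^{-1}F_{1})$ in $\Omega'$, so $\cp_{p}\bigl(\varphi^{-1}F_{0},\varphi^{-1}F_{1};\Omega'\bigr)^{1/p}\le C\,\cp_{r}(F_{0},F_{1};\Omega)^{1/r}$. Taking for $(F_{0},F_{1})$ the concentric rings $\bar{B}(x,t_{1})$, $\mathbb{R}^{2}\setminus B(x,t_{2})$, inserting the explicit radial ring capacities, and running a Besicovitch covering argument together with Lebesgue differentiation of the set function $E\mapsto\cp_{p}(\varphi^{-1}E,\cdot\,;\Omega')$ against Lebesgue measure, one extracts precisely $\int_{\Omega'}|D\varphi(y)|^{(r-2)p/(r-p)}\,dy\le C^{\,rp/(r-p)}$, that is $K_{r,p}(\varphi;\Omega')<\infty$. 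For conformal $\varphi$ the finite-distortion and local-Sobolev regularity demanded by the general theory hold for free, so only the capacitary bookkeeping of \cite{U93,VU98}, specialized to dimension two, is really needed here.

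I expect the converse to be the main obstacle. Testing against a bump concentrating at a point only yields $|D\varphi(\varphi^{-1}(x))|^{p-2}\lesssim C^{p}t^{\,2(p-r)/r}$, which blows up as the scale $t\to0$ and is consistent with that quantity being badly unbounded pointwise; so the required $L^{\kappa}$-integrability, $\tfrac1\kappa=\tfrac1p-\tfrac1r$, cannot be obtained without a genuinely capacitary (or measure-theoretic) argument that sums the local estimates with the correct exponent. By contrast, conformality makes the first implication essentially a one-line computation, collapsing the Jacobian $J_{\varphi}$, the operator norm $|D\varphi|$, and the norm of $D\varphi^{T}$ into the single scalar $|\varphi'|$.
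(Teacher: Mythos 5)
The paper states this theorem as a citation to \cite{GU12} and gives no proof of its own, so there is no internal argument to compare against; I assess your proposal on its merits.

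Your forward implication (sufficiency together with the norm bound), which is the only part of the theorem the paper actually uses, is correct and essentially complete. The conformal identities $J_\varphi=|D\varphi|^2$ and $|D\varphi(y)^{T}\xi|=|D\varphi(y)|\,|\xi|$ are right, the change of variables is right, the H\"older split with conjugate exponents $r/p$ and $r/(r-p)$ is right, and the exponent arithmetic $\frac{(p-2)r}{r-p}+2=\frac{p(r-2)}{r-p}$ checks out, yielding exactly $\|\varphi^\ast f\|_{L^{1,p}(\Omega')}\le K_{r,p}(\varphi;\Omega')^{(r-p)/(rp)}\|\nabla f\|_{L^r(\Omega)}$. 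The one step you pass over silently is approximation: the chain rule $\nabla(f\circ\varphi)=D\varphi^{T}\,(\nabla f)\circ\varphi$ is immediate for smooth $f$, and extending to a general $f\in L^{1,r}(\Omega)$ (defined only quasi-everywhere) requires a closure argument once the norm inequality is in place for smooth functions; for a conformal, hence real-analytic, diffeomorphism with the Luzin $N$ and $N^{-1}$ properties this is standard, but it should be said.

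Your converse is a plausible outline, not a proof. You correctly identify the machinery (the induced capacity inequality, ring condensers, a Besicovitch covering combined with Lebesgue differentiation of the quasiadditive set function) and point to the right sources \cite{U93,VU98}; your heuristic that a pointwise bump-test gives only $|D\varphi(\varphi^{-1}(x))|^{p-2}\lesssim C^{p}t^{2(p-r)/r}$, which degenerates as $t\to 0$ and hence cannot by itself produce the required $L^\kappa$-integrability with $1/\kappa=1/p-1/r$, is sound. However, the pivotal sentence ``one extracts precisely $\int_{\Omega'}|D\varphi|^{(r-2)p/(r-p)}\,dy\le C^{rp/(r-p)}$'' is asserted rather than argued, so as written the necessity half is incomplete. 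Since the paper only invokes the forward direction, this gap does not propagate into anything else in the paper, but it does mean you have proved half of the stated ``if and only if'' and credibly sketched the other half.
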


\begin{thm}\cite{VU02, GU16}
    A conformal mapping $\psi: \Omega \to \Omega'$ generates a bounded composition operator 
    $$
        \psi^{\ast}: L^q(\Omega') \to L^s(\Omega), \quad 1 \leq s < q < \infty 
    $$
    if and only if  
    $$
        K_{r,s}(\psi; \Omega') = \Big( \int_{\Omega'} (J_{\psi^{-1}}(y))^{\frac{q}{q-s}} \, dy \Big)^{\frac{q-s}{qs}} < \infty.
    $$
    The norm of the operator $\varphi^\ast$ has the upper bound $\|\varphi^\ast\| = K_{r,s}(\psi; \Omega')$
\end{thm}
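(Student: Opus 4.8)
The plan is to reduce both directions of the equivalence to the change-of-variables formula \eqref{CangeDir} for the inverse conformal map $\psi^{-1}:\Omega'\to\Omega$, together with the duality $(L^{q/s})^{\ast}=L^{q/(q-s)}$. That $\psi^{\ast}$ acts on equivalence classes of measurable functions, and that \eqref{CangeDir} may be applied to the rough integrand $|f\circ\psi|^{s}$, both follow from the Luzin $N$ and $N^{-1}$ properties of conformal mappings recorded in Section~2.

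\emph{Sufficiency.} Assume $K:=K_{r,s}(\psi;\Omega')<\infty$ and fix $f\in L^{q}(\Omega')$. Applying \eqref{CangeDir} to the conformal homeomorphism $\psi^{-1}:\Omega'\to\Omega$ with the integrand $x\mapsto |f(\psi(x))|^{s}$, and using $f(\psi(\psi^{-1}(y)))=f(y)$, we obtain
$$
\int_{\Omega}|f(\psi(x))|^{s}\,dx=\int_{\Omega'}|f(y)|^{s}\,J_{\psi^{-1}}(y)\,dy .
$$
Since $1\le s<q$, the exponents $q/s$ and $q/(q-s)$ are H\"older-conjugate, so the right-hand side is at most
$$
\Big(\int_{\Omega'}|f(y)|^{q}\,dy\Big)^{s/q}\Big(\int_{\Omega'}(J_{\psi^{-1}}(y))^{\frac{q}{q-s}}\,dy\Big)^{\frac{q-s}{q}} .
$$
Taking $s$-th roots gives $\|\psi^{\ast}f\|_{L^{s}(\Omega)}\le K\,\|f\|_{L^{q}(\Omega')}$; in particular $\psi^{\ast}f\in L^{s}(\Omega)$, so $\psi^{\ast}$ is bounded and $\|\psi^{\ast}\|\le K_{r,s}(\psi;\Omega')$.

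\emph{Necessity and the exact norm.} Conversely, let $\psi^{\ast}:L^{q}(\Omega')\to L^{s}(\Omega)$ be bounded with norm $N$. Running the same change of variables turns $\|\psi^{\ast}f\|_{L^{s}(\Omega)}\le N\|f\|_{L^{q}(\Omega')}$ into
$$
\int_{\Omega'}|f(y)|^{s}\,J_{\psi^{-1}}(y)\,dy\le N^{s}\Big(\int_{\Omega'}|f(y)|^{q}\,dy\Big)^{s/q}\qquad\text{for all }f\in L^{q}(\Omega').
$$
Every nonnegative $g\in L^{q/s}(\Omega')$ is of the form $g=|f|^{s}$ with $f=g^{1/s}\in L^{q}(\Omega')$, so the inequality above says $\int_{\Omega'}g\,J_{\psi^{-1}}\,dy\le N^{s}\|g\|_{L^{q/s}(\Omega')}$ for all such $g$. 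Because $1<q/s<\infty$, the converse H\"older inequality (equivalently, the duality $(L^{q/s}(\Omega'))^{\ast}=L^{q/(q-s)}(\Omega')$) forces $J_{\psi^{-1}}\in L^{q/(q-s)}(\Omega')$ with $\|J_{\psi^{-1}}\|_{L^{q/(q-s)}(\Omega')}\le N^{s}$, i.e.\ $K_{r,s}(\psi;\Omega')\le N$. Combined with the sufficiency estimate, $\|\psi^{\ast}\|=K_{r,s}(\psi;\Omega')$.

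\emph{Main difficulty.} The analytic content is light; the care goes into the measure-theoretic bookkeeping — that $\psi^{\ast}$ descends to $L^{q}$-equivalence classes and that \eqref{CangeDir} is valid for the non-Lipschitz integrand $|f\circ\psi|^{s}$, both resting on the Luzin $N$/$N^{-1}$ properties of conformal maps — and into the observation that the functions $|f|^{s}$ exhaust the positive cone of $L^{q/s}(\Omega')$, which is precisely what promotes the one-sided estimate to the sharp value of $\|\psi^{\ast}\|$.
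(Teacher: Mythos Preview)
The paper does not prove this theorem; it is quoted from \cite{VU02, GU16} as a preliminary result, so there is no in-paper proof to compare against. Your argument is correct and is essentially the standard one: the change-of-variables identity $\int_{\Omega}|f\circ\psi|^{s}\,dx=\int_{\Omega'}|f|^{s}J_{\psi^{-1}}\,dy$ reduces boundedness of $\psi^{\ast}$ to the question of whether $J_{\psi^{-1}}$ lies in the dual space $L^{q/(q-s)}(\Omega')$, and H\"older together with the duality $(L^{q/s})^{\ast}=L^{q/(q-s)}$ then gives both directions and the sharp norm.

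One small remark: for the sufficiency direction it is enough to invoke the nonnegative cone of $L^{q/s}$, but for the converse you correctly observe that this cone is norming for $L^{q/(q-s)}$, which is what pins down the exact value of $\|\psi^{\ast}\|$ rather than just an upper bound. The measure-theoretic caveats you flag (Luzin $N$ and $N^{-1}$, well-definedness on equivalence classes) are exactly the right ones, and the paper records those properties of conformal maps in Section~2.4.
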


Together these two theorems give the following conditions guaranteeing the boundedness of the operator
$$
    \varphi^\ast: W^{1,r}(\Omega) \to W^{1,p}(\Omega').
$$

\begin{thm}\cite{GU16}
    Let $\Omega$ support $(q,p)$-Poincar\'e inequality for some $1 \leq p \leq q \leq \infty$ and a domain $\Omega'$ has finite measure. Suppose that a conformal mapping $\varphi: \Omega' \to \Omega$ generates a bounded composition operator 
    $$
        \varphi^{\ast}: L^{1,r}(\Omega) \to L^{1,p}(\Omega'), \quad 1 \leq p < r < \infty,
    $$
    and the inverse mapping $\varphi^{-1}: \Omega \to \Omega'$ generates a bounded composition operator 
    $$
        (\varphi^{-1})^{\ast}: L^q(\Omega') \to L^s(\Omega), \quad 1 \leq s < q < \infty 
    $$
    for some $r \leq s \leq q$.

    Then $\varphi: \Omega' \to \Omega$ generates a bounded composition operator 
    $$
         \varphi^\ast: W^{1,r}(\Omega) \to W^{1,p}(\Omega').
    $$
\end{thm}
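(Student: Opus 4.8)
The plan is to estimate separately, for $f\in W^{1,r}(\Omega)$, the two parts of $\|\varphi^{\ast}f\|_{W^{1,p}(\Omega')}=\|f\circ\varphi\|_{L^{p}(\Omega')}+\|\nabla(f\circ\varphi)\|_{L^{p}(\Omega')}$. The gradient part is immediate: since $f\in W^{1,r}(\Omega)\subset L^{1,r}(\Omega)$, the assumed boundedness of $\varphi^{\ast}\colon L^{1,r}(\Omega)\to L^{1,p}(\Omega')$ gives $\|\nabla(f\circ\varphi)\|_{L^{p}(\Omega')}=\|\varphi^{\ast}f\|_{L^{1,p}(\Omega')}\le K_{r,p}(\varphi;\Omega')\,\|\nabla f\|_{L^{r}(\Omega)}\le K_{r,p}(\varphi;\Omega')\,\|f\|_{W^{1,r}(\Omega)}$. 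So the real work is the $L^{p}$ term, and the idea is to run $f$ first through the Poincar\'e inequality on $\Omega$, which upgrades it to $L^{q}(\Omega)$, and then through the composition operator on Lebesgue spaces, exploiting that $\Omega'$ has finite measure to close the gap between the exponents $s$ and $p$.

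First, $f\in L^{q}(\Omega)$ with controlled norm. A $(q,p)$-Poincar\'e domain is bounded, so $|\Omega|<\infty$; since $p<r$, H\"older's inequality gives $f\in L^{1,p}(\Omega)$ with $\|\nabla f\|_{L^{p}(\Omega)}\le|\Omega|^{1/p-1/r}\|\nabla f\|_{L^{r}(\Omega)}$, and then \eqref{AvPoincare_pq} yields $\|f-f_{\Omega}\|_{L^{q}(\Omega)}\le B_{q,p}(\Omega)\,|\Omega|^{1/p-1/r}\|\nabla f\|_{L^{r}(\Omega)}$. Combined with $|f_{\Omega}|\le|\Omega|^{-1/r}\|f\|_{L^{r}(\Omega)}$ and the fact that constants lie in $L^{q}$ of the finite-measure domain $\Omega$, this produces a constant $C_{1}=C_{1}(\Omega,p,q,r)$ with $\|f\|_{L^{q}(\Omega)}\le C_{1}\|f\|_{W^{1,r}(\Omega)}$.

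Now the $L^{p}$ term. Applying the change of variables \eqref{CangeInv} to the function $|f\circ\varphi|^{p}$ gives $\|f\circ\varphi\|_{L^{p}(\Omega')}^{p}=\int_{\Omega}|f(x)|^{p}J_{\varphi^{-1}}(x)\,dx$, so it suffices to dominate this by a multiple of $\|f\|_{L^{q}(\Omega)}^{p}$; by H\"older (exponents $q/p$ and $q/(q-p)$) this reduces to the integrability $J_{\varphi^{-1}}\in L^{q/(q-p)}(\Omega)$, i.e.\ to the boundedness of the Lebesgue-space composition operator $\varphi^{\ast}\colon L^{q}(\Omega)\to L^{p}(\Omega')$. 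I would obtain this from the second hypothesis: by the composition-operator characterization of \cite{VU02,GU16}, boundedness of $(\varphi^{-1})^{\ast}\colon L^{q}(\Omega')\to L^{s}(\Omega)$ is an integrability statement for the conformal factor of $\varphi$ on $\Omega'$, and, because $r\le s\le q$ and $|\Omega'|<\infty$ (so that $f\in L^{r}(\Omega)\cap L^{q}(\Omega)\subset L^{s}(\Omega)$ by interpolation and $L^{s}(\Omega')\hookrightarrow L^{p}(\Omega')$), this should be exactly what is needed to produce a constant $C_{2}$ with $\|f\circ\varphi\|_{L^{p}(\Omega')}\le C_{2}\|f\|_{L^{q}(\Omega)}$. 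Adding the two estimates, $\|\varphi^{\ast}f\|_{W^{1,p}(\Omega')}\le\bigl(K_{r,p}(\varphi;\Omega')+C_{1}C_{2}\bigr)\|f\|_{W^{1,r}(\Omega)}$, which is the assertion.

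The routine pieces are the gradient bound and the Poincar\'e step. The main obstacle is the last step: one must check that the exponent constraints $p<r$ and $r\le s\le q$ are arranged precisely so that the conformal-factor integrability delivered by the second hypothesis matches the integrability of $J_{\varphi^{-1}}$ over $\Omega$ that is needed to transfer the $L^{q}$-bound on $f$ to an $L^{p}$-bound on $f\circ\varphi$ over $\Omega'$; in particular one has to keep careful account of which Jacobian ($J_{\varphi}$ on $\Omega'$ versus $J_{\varphi^{-1}}$ on $\Omega$) each hypothesis controls, and --- if an explicit bound on $\|\varphi^{\ast}\|$ is desired --- track the constants $K_{r,p}(\varphi;\Omega')$, $B_{q,p}(\Omega)$, $|\Omega|$ and $|\Omega'|$ through the argument.
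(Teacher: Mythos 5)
Your gradient estimate and the Poincar\'e step yielding $\|f\|_{L^q(\Omega)}\le C_1\|f\|_{W^{1,r}(\Omega)}$ are both correct. The gap is precisely where you flag the ``main obstacle,'' and it is not resolvable by bookkeeping: the Jacobian integrability your H\"older step needs has the wrong sign. Your argument requires $J_{\varphi^{-1}}\in L^{q/(q-p)}(\Omega)$; changing variables back to $\Omega'$ and using $J_{\varphi^{-1}}\circ\varphi=1/J_{\varphi}$,
$$
\int_{\Omega}J_{\varphi^{-1}}^{\,q/(q-p)}\,dx=\int_{\Omega'}J_{\varphi}^{\,-p/(q-p)}\,dy,
$$
so you need a \emph{negative} power of $J_\varphi$ to be integrable on $\Omega'$. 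The second hypothesis, by the characterization quoted in the paper, is equivalent to $\int_{\Omega'}J_{\varphi}^{\,q/(q-s)}\,dy<\infty$, a \emph{positive} power. These are independent conditions (the hypothesis bounds how large $J_\varphi$ can be; your step needs to bound how small it can be), and replacing $q$ by $s$ or any other admissible exponent in H\"older still produces a negative power, so the mismatch is structural. Your interpolation remark $f\in L^r\cap L^q\subset L^s$ is true but beside the point, since the obstruction concerns the Jacobian, not which $L^t(\Omega)$ the function $f$ lies in; note also that on your route the constraint $r\le s$ does no work, which is itself a warning sign.

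The actual argument uses $(\varphi^{-1})^{\ast}$ for a different purpose and in the opposite direction. (The Poincar\'e hypothesis is effectively on $\Omega'$, as in the application via Corollary~\ref{CompW} where $\Omega'=\mathbb{D}$; with Poincar\'e only on $\Omega$ the second composition operator is never used.) For $f\in W^{1,r}(\Omega)$: the first hypothesis gives $\nabla(f\circ\varphi)\in L^p(\Omega')$; the $(q,p)$-Poincar\'e inequality on $\Omega'$ gives $\|f\circ\varphi-c\|_{L^q(\Omega')}\lesssim\|\nabla(f\circ\varphi)\|_{L^p(\Omega')}$ for a suitable constant $c$; applying $(\varphi^{-1})^{\ast}$ to $f\circ\varphi-c$ gives $\|f-c\|_{L^s(\Omega)}\lesssim\|f\circ\varphi-c\|_{L^q(\Omega')}$; then $r\le s$ together with $|\Omega|<\infty$ gives $\|f-c\|_{L^r(\Omega)}\lesssim\|f-c\|_{L^s(\Omega)}$, and combining with $\|f\|_{L^r(\Omega)}$ bounds $|c|$; finally $\|f\circ\varphi\|_{L^p(\Omega')}\le\|f\circ\varphi-c\|_{L^p(\Omega')}+|c|\,|\Omega'|^{1/p}$, with the first term controlled via $L^q(\Omega')\hookrightarrow L^p(\Omega')$ since $p\le q$ and $|\Omega'|<\infty$. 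So $(\varphi^{-1})^{\ast}$ transports the \emph{deviation} $f\circ\varphi-c$ back to $\Omega$ in order to control the constant, rather than transporting $f$ itself forward, and $r\le s$ is the finite-measure embedding $L^s(\Omega)\hookrightarrow L^r(\Omega)$ used at exactly that step.
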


In particular, for $r = 2$ and $\Omega' = \mathbb{D}$ we obtain the following corollary:
\begin{cor}\label{CompW}
    Let $\Omega$ be a simply connected bounded domain and $\varphi: \mathbb{D} \to \Omega$ be a conformal mapping. If $\Omega$ is $\alpha$-regular with $\alpha \in (\frac{p}{4},\infty]$, then $\varphi: \mathbb{D} \to \Omega$ generates a bounded composition operator 
    $$
        \varphi^\ast: W^{1,2}(\Omega) \to W^{1,p}(\mathbb{D}),  \quad 1 \leq p \leq 2.
    $$
\end{cor}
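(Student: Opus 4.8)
The plan is to prove the norm inequality $\|f\circ\varphi\|_{W^{1,p}(\mathbb{D})}\le C\|f\|_{W^{1,2}(\Omega)}$ for every $f\in W^{1,2}(\Omega)$; this is exactly the specialization of the general composition theorem quoted just above to $\Omega'=\mathbb{D}$, $r=2$, but carrying it out by hand makes visible where $\alpha$-regularity is actually used. The first-order part costs nothing: $\varphi^\ast$ is an isometry of $L^{1,2}(\Omega)$ onto $L^{1,2}(\mathbb{D})$ and $|\mathbb{D}|=\pi<\infty$, so
\[
\|\nabla(f\circ\varphi)\|_{L^p(\mathbb{D})}\le\pi^{\frac1p-\frac12}\|\nabla(f\circ\varphi)\|_{L^2(\mathbb{D})}=\pi^{\frac1p-\frac12}\|\nabla f\|_{L^2(\Omega)}
\]
for all $1\le p\le 2$. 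So everything reduces to controlling $\|f\circ\varphi\|_{L^p(\mathbb{D})}$.

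For the zeroth-order term I would fix an auxiliary $q\ge 2$ (to be chosen later), estimate $\|f\circ\varphi\|_{L^p(\mathbb{D})}\le\pi^{\frac1p-\frac1q}\|f\circ\varphi\|_{L^q(\mathbb{D})}$, and split $f\circ\varphi$ into its Euclidean mean over $\mathbb{D}$ and its oscillation. The oscillation is absorbed by Theorem~\ref{Poincare_pq}, valid since $0\le\frac12-\frac1q<\frac12$, which together with the isometry gives $\|f\circ\varphi-(f\circ\varphi)_{\mathbb{D}}\|_{L^q(\mathbb{D})}\le B_{q,2}(\mathbb{D})\|\nabla f\|_{L^2(\Omega)}$. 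The mean is the only real work: comparing it with $f_\Omega$, one has $|f_\Omega|\le|\Omega|^{-1/2}\|f\|_{L^2(\Omega)}$, and by the change of variables \eqref{CangeDir} together with $\int_{\mathbb{D}}J_\varphi=|\Omega|$ (so that $\tfrac1\pi-\tfrac{J_\varphi}{|\Omega|}$ has mean zero on $\mathbb{D}$) one may write
\[
(f\circ\varphi)_{\mathbb{D}}-f_\Omega=\int_{\mathbb{D}}\bigl(f\circ\varphi-(f\circ\varphi)_{\mathbb{D}}\bigr)\Bigl(\tfrac1\pi-\tfrac{J_\varphi}{|\Omega|}\Bigr)\,dy,
\]
whence H\"older's inequality and Theorem~\ref{Poincare_pq} yield $|(f\circ\varphi)_{\mathbb{D}}-f_\Omega|\le B_{q,2}(\mathbb{D})\,\bigl\|\tfrac1\pi-\tfrac{J_\varphi}{|\Omega|}\bigr\|_{L^{q'}(\mathbb{D})}\|\nabla f\|_{L^2(\Omega)}$. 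Combining the three bounds gives the claim with an explicit constant.

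The sole place the hypothesis on $\Omega$ is used — and the only genuine obstacle — is the finiteness of $\bigl\|\tfrac1\pi-\tfrac{J_\varphi}{|\Omega|}\bigr\|_{L^{q'}(\mathbb{D})}$, i.e. of $\|J_\varphi\|_{L^{q'}(\mathbb{D})}$: conformal $\alpha$-regularity means precisely that $J_\varphi\in L^{\alpha/2}(\mathbb{D})$, so it is enough to take $q$ with $q'\le\alpha/2$, i.e. $q\ge\alpha/(\alpha-2)$. Choosing such a $q$ that is also $\ge 2$ fixes all constants and finishes the proof. If one instead prefers to quote the general theorem verbatim, its remaining hypotheses are checked the same way: $|\mathbb{D}|<\infty$ is trivial; $\varphi^\ast\colon L^{1,2}(\Omega)\to L^{1,p}(\mathbb{D})$ is bounded by Theorem~\ref{CompTh} because the exponent $\tfrac{(r-2)p}{r-p}$ degenerates to $0$ at $r=2$, giving $K_{2,p}(\varphi;\mathbb{D})=\pi$; and $(\varphi^{-1})^\ast\colon L^q(\mathbb{D})\to L^s(\Omega)$ with $2\le s\le q$ is bounded iff $\int_{\mathbb{D}}J_\varphi^{q/(q-s)}\,dy<\infty$, again furnished by $\alpha$-regularity once $q/(q-s)\le\alpha/2$; it is the requirement in that theorem that $\Omega$ also carry a $(q,p)$-Poincar\'e inequality, forcing a supercritical $q>2$ to be paired with $p<2$, that accounts for the threshold on $\alpha$ recorded in the statement.
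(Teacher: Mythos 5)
Your direct argument is correct and is noticeably more self-contained than what the paper offers: the paper never writes out a proof of Corollary \ref{CompW}, it simply asserts that it is the specialization $r=2$, $\Omega'=\mathbb{D}$ of the general composition theorem quoted above it. Your hand-verification reproduces that conclusion but replaces the somewhat opaque bookkeeping of the general theorem by a transparent three-term split: the gradient estimate via conformal isometry plus H\"older, the oscillation via the disc Poincar\'e inequality of Theorem \ref{Poincare_pq}, and the control of the mean via the neat identity
\[
(f\circ\varphi)_{\mathbb{D}}-f_\Omega=\int_{\mathbb{D}}\bigl(f\circ\varphi-(f\circ\varphi)_{\mathbb{D}}\bigr)\Bigl(\tfrac1\pi-\tfrac{J_\varphi}{|\Omega|}\Bigr)\,dy,
\]
which exploits that $\tfrac1\pi-\tfrac{J_\varphi}{|\Omega|}$ has disc-mean zero. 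This trick is genuinely different from (and arguably cleaner than) invoking the inverse composition operator $(\varphi^{-1})^\ast$ as the quoted theorem does, and it isolates exactly where $\alpha$-regularity enters: one only needs $J_\varphi\in L^{q'}(\mathbb{D})$ for some $q>2$, which holds for every admissible $\alpha>2$ since $q'\to 1^+$ as $q\to\infty$. So your proof actually shows the conclusion under the bare definition of conformal $\alpha$-regularity, with no further threshold on $\alpha$.

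Two small remarks. First, when you apply Theorem \ref{Poincare_pq} to $f\circ\varphi$, you technically only know a priori that $\nabla(f\circ\varphi)\in L^2(\mathbb{D})$, i.e.\ $f\circ\varphi\in L^{1,2}(\mathbb{D})$ rather than $W^{1,2}(\mathbb{D})$; this is harmless because the Poincar\'e inequality on $\mathbb{D}$ is a statement about functions with $L^p$ gradient (and implies their $L^q$-membership), but it deserves a word since establishing $f\circ\varphi\in L^p(\mathbb{D})$ is precisely the point. Second, your closing explanation that the paper's threshold comes from the $(q,p)$-Poincar\'e hypothesis on $\Omega$ forcing a supercritical $q$ is speculative and not really borne out: the corollary's stated lower bound $\alpha>p/4$ is vacuous (it is always below $2$), and it conflicts with the bound $\alpha>4/p$ used when the corollary is invoked in Theorem \ref{pq-est}, which is almost certainly the intended reading. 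Your argument in fact shows the weaker requirement $\alpha>2$ already suffices, so the genuine constraint $\alpha>4/p$ in Theorem \ref{pq-est} must come from elsewhere in that theorem's proof rather than from the boundedness of $\varphi^\ast:W^{1,2}(\Omega)\to W^{1,p}(\mathbb{D})$.
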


\section{From $(p,q)$-Laplacian to Laplacian with density}

In this section we establish a connection between the first eigenvalues of the Laplacian with density on the domain $\Omega$ with the quite general geometry and the homogeneous $(p,q)$-Laplacian on the unit disk. As it was pointed out in the Introduction, the approach is based on the composition operator on Sobolev and Lebesgue spaces and on estimates of the constant in the corresponding Poincar\'e inequality.

Let us consider the Neumann problem for the $(p,q)$-Laplacian on the unit disk:  

    \begin{align}\label{Neumann-pq}
        \begin{cases}
            -\Delta_p u = - \divg(|\nabla u|^{p-2} \nabla u)=\mu_{p,q} \|u\|^{p-q}_{L^q(\mathbb{D})} |u|^{q-2} u \,\, &\text{ in } \mathbb{D},\\
            \frac{\partial u}{\partial\nu}=0 &\text{ on } \partial\mathbb{D}.
        \end{cases}
    \end{align}

By the Min-Max Principle (see, e.g. \cite{GPU23}), the first non-zero eigenvalue of problem \eqref{Neumann-pq} can be found as
    $$
        \mu_{p,q}(\mathbb{D}) = \inf\left\{ \frac{\|\nabla u\|^p_{L^p(\mathbb{D})}}{\|u\|^p_{L^q(\mathbb{D})}} : u \in W^{1,p}(\mathbb{D})\setminus \{0\}, \int_{\mathbb{D}} |u|^{q-2}u = 0 \right\}.
    $$

This means that $(\mu_{p,q}(\mathbb{D}))^{-\frac{1}{p}}$ equals to the best constant in the $(q,p)$-Sobolev-Poincar\'e inequality
    $$
        \inf\limits_{c \in \mathbb{R}}\|f - c\|_{L^q(\mathbb{D})} \leq \widetilde{B}_{q,p}(\mathbb{D})\|\nabla f\|_{L^p(\mathbb{D})}.
    $$

Using inequalities \eqref{EquivPoncare}, we obtain the following estimates for $\mu_{p,q}(\mathbb{D})$:
    $$
        \frac{2^p}{B^p_{q,p}(\mathbb{D})} \geq \mu_{p,q} (\mathbb{D}) \geq \frac{1}{B^p_{q,p}(\mathbb{D})},
    $$
where $B_{q,p}(\mathbb{D})$ is a constant in Poincar\'e inequality \eqref{Pest}.

In the first simple observation, we establish a connection between $\mu_\rho(\Omega)$ and $\mu_{2,2}(\mathbb{D}) = \mu(\mathbb{D})$, i.e. the first eigenvalue of the usual Laplacian on the unit disc $\mathbb{D}$. Of course, this proposition can be obtain in a simpler way with the usual change of variables, but we propose the proof in this way to demonstrate the idea of the applied approach. 

\begin{thm}\label{22-est}
    Let $\Omega \subset \mathbb{R}^2$ be a simply connected bounded domain. Let also $\rho$ be a density. Consider a conformal mapping $\varphi: \mathbb{D} \to \Omega$. If 
    $$
        K(\Omega, \rho) = \esssup\limits_{x \in \Omega} \frac{\rho(x)} {J_{\varphi^{-1}}(x)} < \infty,
    $$
    then Sobolev-Poincar\'e inequality \eqref{Poincare_22} holds with the constant
    $$
        B^\rho_{2,2}(\Omega) \leq B_{2,2}(\mathbb{D}) \sqrt{K(\Omega, \rho)}.
    $$

    If, in addition, $\Omega$ is an $\alpha$-regular domain, then the embedding $W^{1,2}(\Omega) \hookrightarrow L^{1,2}(\Omega, \rho)$ is compact and
    
    $$
        \mu_{\rho}(\Omega) \geq \frac{\mu(\mathbb{D})}{K(\Omega, \rho)}.
    $$

    In the case $\rho(x) = J_{\varphi^{-1}}(x)$ a.e. on $\Omega$,
    $$
        \mu_{J_{\varphi^{-1}}}(\Omega) \geq \mu(\mathbb{D}).
    $$
\end{thm}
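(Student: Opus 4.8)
The plan is to realize Theorem~\ref{22-est} as the simplest instance of the scheme encoded in diagram~\eqref{Diag}, with $\Omega'=\mathbb{D}$ and $p=q=2$, using that a conformal mapping $\varphi\colon\mathbb{D}\to\Omega$ is an isometry of the seminormed spaces $L^{1,2}$ and has the Luzin $N$ and $N^{-1}$ properties (so the composition operators below are well defined). The hypothesis $K(\Omega,\rho)<\infty$ enters only through the pointwise bound $\rho\le K(\Omega,\rho)\,J_{\varphi^{-1}}$ on $\Omega$.

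For the Poincar\'e inequality~\eqref{Poincare_22} I would fix $u\in W^{1,2}(\Omega)$ with $\|\nabla u\|_{L^2(\Omega)}<\infty$, set $v=u\circ\varphi$, and note that $v$ is locally integrable on $\mathbb{D}$ (if $K$ is compactly contained in $\mathbb{D}$, then $\varphi(K)$ is a compact subset of $\Omega$ on which $J_{\varphi^{-1}}$ is bounded, so $\int_K|v|=\int_{\varphi(K)}|u|\,J_{\varphi^{-1}}<\infty$), with $\|\nabla v\|_{L^2(\mathbb{D})}=\|\nabla u\|_{L^2(\Omega)}$ by conformality; since $\mathbb{D}$ supports a Poincar\'e inequality, $v$ agrees with an element of $W^{1,2}(\mathbb{D})$ up to an additive constant. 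Then for every $c\in\mathbb{R}$ the change of variables~\eqref{CangeInv} applied to $y\mapsto|u(\varphi(y))-c|^2$ gives $\int_\Omega|u-c|^2\rho\le K(\Omega,\rho)\int_\Omega|u-c|^2 J_{\varphi^{-1}}=K(\Omega,\rho)\int_{\mathbb{D}}|v-c|^2$. Choosing $c=v_{\mathbb{D}}=|\mathbb{D}|^{-1}\int_{\mathbb{D}}v$ and invoking Theorem~\ref{Poincare_pq} with $p=q=2$ yields $B^\rho_{2,2}(\Omega)\le B_{2,2}(\mathbb{D})\sqrt{K(\Omega,\rho)}$; taking the infimum over $c$ instead, and using that $\mu(\mathbb{D})^{-1/2}$ is the best constant in $\inf_c\|f-c\|_{L^2(\mathbb{D})}\le C\|\nabla f\|_{L^2(\mathbb{D})}$, gives the sharper estimate $\inf_c\|u-c\|_{L^2(\Omega,\rho)}^2\le K(\Omega,\rho)\,\mu(\mathbb{D})^{-1}\|\nabla u\|_{L^2(\Omega)}^2$, which I will reuse below.

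For the compactness I would assume $\Omega$ is $\alpha$-regular, fix some $p\in(1,2)$, and factor the embedding $W^{1,2}(\Omega)\hookrightarrow L^2(\Omega,\rho)$ through the three maps $\varphi^\ast\colon W^{1,2}(\Omega)\to W^{1,p}(\mathbb{D})$, the inclusion $W^{1,p}(\mathbb{D})\hookrightarrow L^2(\mathbb{D})$, and $(\varphi^{-1})^\ast\colon L^2(\mathbb{D})\to L^2(\Omega,\rho)$. The first is bounded by Corollary~\ref{CompW} (an $\alpha$-regular domain has $\alpha>2>p/4$), the inclusion is compact by the Rellich--Kondrachov theorem on $\mathbb{D}$ since $2<2p/(2-p)$ for $p>1$, and $(\varphi^{-1})^\ast$ is bounded because $\|(\varphi^{-1})^\ast g\|_{L^2(\Omega,\rho)}^2=\int_\Omega|g\circ\varphi^{-1}|^2\rho\le K(\Omega,\rho)\int_{\mathbb{D}}|g|^2$ by the pointwise bound on $\rho$ and~\eqref{CangeInv}; since the composition of the three maps sends $u$ to $(u\circ\varphi)\circ\varphi^{-1}=u$, the embedding itself is compact. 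Then the spectrum of~\eqref{DensLaplacian} is discrete, the Min--Max principle and~\eqref{MinMax} apply, and for $u\in W^{1,2}(\Omega)$ with $\int_\Omega u\rho=0$ the value $c=0$ minimizes $c\mapsto\int_\Omega|u-c|^2\rho$; the sharper estimate above then gives $\int_\Omega u^2\rho\le K(\Omega,\rho)\,\mu(\mathbb{D})^{-1}\int_\Omega|\nabla u|^2$, hence $\mu_\rho(\Omega)\ge\mu(\mathbb{D})/K(\Omega,\rho)$. Finally, $J_{\varphi^{-1}}$ is itself a density (positive and continuous, with $\int_\Omega J_{\varphi^{-1}}=|\mathbb{D}|<\infty$ by~\eqref{CangeInv}) and $K(\Omega,J_{\varphi^{-1}})=1$, so the last assertion is the special case $\rho=J_{\varphi^{-1}}$.

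I do not anticipate a deep obstacle: the argument is essentially bookkeeping within the scheme of diagram~\eqref{Diag}. The two points requiring care are (i) verifying that $v=u\circ\varphi$ genuinely belongs to $W^{1,2}(\mathbb{D})$ --- equivalently, is globally integrable on $\mathbb{D}$ --- before applying Theorem~\ref{Poincare_pq}, which is where the Poincar\'e inequality on the disk is used, and (ii) keeping the averaged Poincar\'e constant $B_{2,2}(\mathbb{D})$ distinct from the infimum-form constant $\mu(\mathbb{D})^{-1/2}$, since only the latter yields the sharp factor $\mu(\mathbb{D})$ in the eigenvalue bound rather than the weaker $B_{2,2}(\mathbb{D})^{-2}$.
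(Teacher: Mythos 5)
Your argument is correct and follows essentially the paper's own route: factor through the diagram $L^{1,2}(\Omega)\to L^{1,2}(\mathbb{D})\to L^2(\mathbb{D})\to L^2(\Omega,\rho)$ using the conformal isometry of $L^{1,2}$, the $(2,2)$-Poincar\'e inequality on the disk, and the weighted change of variable for $(\varphi^{-1})^\ast$, then upgrade to compactness via $\alpha$-regularity, Rellich on $\mathbb{D}$, and \eqref{MinMax}.

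Two small comments on where you deviate. First, your care in point (ii) is genuinely useful: the eigenvalue bound $\mu_\rho(\Omega)\ge\mu(\mathbb{D})/K(\Omega,\rho)$ needs the infimum-form constant $\widetilde B_{2,2}(\mathbb{D})=\mu(\mathbb{D})^{-1/2}$, not the averaged-form $B_{2,2}(\mathbb{D})$ of Theorem \ref{Poincare_pq}, since $B_{2,2}(\mathbb{D})^{-2}\le\mu(\mathbb{D})$ rather than $\ge$; the paper's displayed chain keeps writing $B_{2,2}(\mathbb{D})$ and leaves this distinction implicit, so your explicit bookkeeping is the right way to reach the stated constant. Second, for compactness you factor through $W^{1,p}(\mathbb{D})$ with $p\in(1,2)$, whereas the paper takes $p=2$ and uses $W^{1,2}(\mathbb{D})\hookrightarrow L^2(\mathbb{D})$. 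Your route is fine in principle, but you justify the boundedness of $\varphi^\ast\colon W^{1,2}(\Omega)\to W^{1,p}(\mathbb{D})$ by quoting the threshold $\alpha>p/4$ from Corollary \ref{CompW}, which is almost certainly a misprint (compare the $\alpha>4/p$ appearing in Theorem \ref{pq-est}); with the corrected threshold a fixed $p$ near $1$ may fail for $\alpha$ only slightly above $2$, so one should either take $p=2$ as the paper does (which needs only $\alpha>2$) or choose $p$ sufficiently close to $2$ depending on $\alpha$.
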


\begin{proof}
    As we want to compare the first non-trivial eigenvalue of the problem with density with usual Laplacian, we need to use the following anticommutative diagram:
    
    \begin{equation*}
        \begin{CD}
            L^{1,2}(\Omega) @>{\varphi^\ast}>> L^{1,2}(\mathbb{D}) \\
            @VVV @VVV \\
            L^2(\Omega, \rho) @<{(\varphi^{-1})^\ast}<< L^2(\mathbb{D})
        \end{CD}
    \end{equation*}

    This diagram explains us how to estimate the constant in the Poincar\'e inequality \eqref{Poincare_22}. Further, if, in addition, the operator
    $$
        \varphi^\ast : W^{1,2}(\Omega) \to W^{1,2}(\mathbb{D})
    $$
    is also bounded, we conclude that the embedding $W^{1,2}(\Omega) \hookrightarrow L^{1,2}(\Omega, \rho)$ is compact and we also obtain the estimates for the first eigenvalue with equality \eqref{MinMax}.
    
    By the above diagram, the constant $B^\rho_{2,2}(\Omega)$ can be estimated as follows:
    $$
        B^\rho_{2,2}(\Omega) \leq \|\varphi^\ast\| \cdot B_{2,2}(\mathbb{D}) \cdot \|(\varphi^{-1})^\ast\|.
    $$
    Indeed, in virtue of the definition of composition operators $\varphi^\ast$ and $(\varphi^{-1})^\ast$,
    \begin{align*}
        \inf\limits_{c \in \mathbb{R}}\|f-c\|_{L^2(\Omega, \rho)} &\leq \|(\varphi^{-1})^\ast\| \cdot \inf\limits_{c \in \mathbb{R}}\|f-c\|_{L^2(\mathbb{D})} \leq B_{2,2}(\mathbb{D}) \cdot \|(\varphi^{-1})^\ast\| \|f\|_{L^{1,2}(\mathbb{D})} \\
        &\leq \|\varphi^\ast\| \cdot B_{2,2}(\mathbb{D}) \cdot \|(\varphi^{-1})^\ast\| \|f\|_{L^{1,2}(\Omega)}.
    \end{align*}
    
    Since the conformal mapping induce an isomorphism of $L^{1,2}$-Sobolev spaces, the norm $\|\varphi^\ast\|$ equals to $1$. With the change of variable formula \eqref{CangeInv}, we can easily estimate the norm of $(\varphi^{-1})^\ast :  L^2(\mathbb{D}) \to L^2(\Omega, \rho)$.
    \begin{align*}
        \|f\circ \varphi^{-1}\|^2_{L^2(\Omega, \rho)} &= \int_{\Omega} |f(\varphi^{-1}(x))|^2 \rho(x) \, dx = \int_{\mathbb{D}} |f(y)|^2 \frac{\rho(\varphi(y))} {J_{\varphi^{-1}}(\varphi(y))} \, dy \\
        & \leq \esssup\limits_{y \in \mathbb{D}} \frac{\rho(\varphi(y))} {J_{\varphi^{-1}}(\varphi(y))} \|f\|^2_{L^2(\mathbb{D})}.
    \end{align*}

    The above inequalities imply that $\|(\varphi^{-1})^\ast\| \leq \esssup\limits_{x \in \Omega} \left( \frac{\rho(x)} {J_{\varphi^{-1}}(x)} \right)^{\frac{1}{2}}$. Therefore,
    $$
        B^\rho_{2,2}(\Omega) \leq B_{2,2}(\mathbb{D}) \esssup\limits_{x \in \Omega} \left( \frac{\rho(x)} {J_{\varphi^{-1}}(x)} \right)^{\frac{1}{2}}.
    $$

    If $\Omega$ is an $\alpha$-regular domain, then by Corollary \ref{CompW} the composition operator 
    $$
        \varphi^\ast : W^{1,2}(\Omega) \to W^{1,2}(\mathbb{D})
    $$
    is also bounded and with the help of \eqref{MinMax} we have the estimate for the first eigenvalue
    $$
        \mu_{\rho}(\Omega) = \frac{1}{(B^{\rho}_{2,2}(\Omega))^2} \geq \frac{1}{(B_{2,2}(\mathbb{D}))^2 \|(\varphi^{-1})^\ast\|} \geq \frac{\mu(\mathbb{D})}{\esssup\limits_{x \in \Omega} \frac{\rho(x)} {J_{\varphi^{-1}}(x)}}.
    $$

    In particular, if $\rho(x) = J_{\varphi^{-1}}(x)$ a.e. on $\Omega$,
    $$
        \mu_{J_{\varphi^{-1}}}(\Omega) \geq \mu(\mathbb{D}).
    $$
\end{proof}

Further, we will proceed with the case of different $p$ and $q$. The following theorem provides a lower estimate for $\mu_\rho(\Omega)$ based on the connection with homogeneous $(p,q)$-Laplacian. This estimate also contains the term depending on the density $\rho$ and the Jacobian of the conformal mapping $\varphi^{-1}: \Omega \to \mathbb{D}$.

\begin{thm}\label{pq-est}
    Let $\Omega \subset \mathbb{R}^2$ be a simply connected bounded domain. Let also $\rho$ be a density and $1 \leq p < 2$, $2 < q \leq \frac{2p}{2-p}$. Consider a conformal mapping $\varphi: \mathbb{D} \to \Omega$. If
    $$
        K_q(\Omega, \rho, \varphi) = \Big\| \frac{\rho}{J^{2/q}_{\varphi^{-1}}} \Big\|_{L^{\frac{q}{q-2}}(\Omega)} < \infty,
    $$
    then Sobolev-Poincar\'e inequality \eqref{Poincare_22} holds with the constant 
    $$
        B^\rho_{2,2}(\Omega) \leq \pi^{\frac{2-p}{p}} B_{q,p}(\mathbb{D}) \sqrt{K_q(\Omega, \rho, \varphi)}.
    $$

    If, in addition, $\Omega$ is an $\alpha$-regular domain with $\alpha \in (\frac{4}{p}, \infty]$, and $q < \frac{2p}{2-p}$, then the embedding 
    $W^{1,2}(\Omega) \hookrightarrow L^2(\Omega, \rho)$ is compact and 
    $$
        \mu_{\rho}(\Omega) \geq \frac{\mu_{p,q}(\mathbb{D})}{2^{2p}\pi^{\frac{2(2-p)}{p}} K_q(\Omega, \rho, \varphi)}.
    $$

    In the case $\rho(x) = J^{2/q}_{\varphi^{-1}}(x)$ a.e. on $\Omega$,
    $$
        \mu_{J_{\varphi^{-1}}}(\Omega) \geq \frac{\mu_{p,q}(\mathbb{D})}{2^{2p}\pi^{\frac{2(2-p)}{p}}}.
    $$
\end{thm}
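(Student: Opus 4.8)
The plan is to run the scheme from the proof of Theorem~\ref{22-est}, but now with the diagram \eqref{Diag} in its genuine $(p,q)$-form: target domain $\mathbb{D}$, exponents $1\le p<2<q$, so that the compact Sobolev embedding $W^{1,p}(\mathbb{D})\hookrightarrow L^q(\mathbb{D})$ plays the role that the trivial embedding played there. Reading the diagram off, together with the identity $u-c=(\varphi^{-1})^\ast(\varphi^\ast u-c)$, the Poincar\'e inequality \eqref{Pest} on $\mathbb{D}$ (Theorem~\ref{Poincare_pq}), and \eqref{EquivPoncare} to pass to the $\inf_c$ form, one gets the factorization
\[
    B^\rho_{2,2}(\Omega)\ \le\ \|\varphi^\ast\colon L^{1,2}(\Omega)\to L^{1,p}(\mathbb{D})\|\cdot B_{q,p}(\mathbb{D})\cdot\|(\varphi^{-1})^\ast\colon L^q(\mathbb{D})\to L^2(\Omega,\rho)\|,
\]
so the problem reduces to estimating the two composition-operator norms. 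Since a conformal map is an isometry of $L^{1,2}$-seminorms, $\|\varphi^\ast\colon L^{1,2}(\Omega)\to L^{1,p}(\mathbb{D})\|$ is merely the norm of the H\"older embedding $L^{1,2}(\mathbb{D})\hookrightarrow L^{1,p}(\mathbb{D})$, a power of $|\mathbb{D}|=\pi$; equivalently this is Theorem~\ref{CompTh} with $r=2$, where the integral there degenerates to $\int_{\mathbb{D}}|D\varphi|^{0}=\pi$.

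The substantive point is the norm of $(\varphi^{-1})^\ast$. Starting from
\[
    \|f\circ\varphi^{-1}\|_{L^2(\Omega,\rho)}^2=\int_\Omega |f(\varphi^{-1}(x))|^2\,J^{2/q}_{\varphi^{-1}}(x)\cdot\frac{\rho(x)}{J^{2/q}_{\varphi^{-1}}(x)}\,dx,
\]
I would apply H\"older's inequality with the conjugate exponents $q/2$ and $q/(q-2)$. The first factor $\left(\int_\Omega |f(\varphi^{-1}(x))|^q J_{\varphi^{-1}}(x)\,dx\right)^{2/q}$ collapses to $\|f\|_{L^q(\mathbb{D})}^2$ by the change of variables \eqref{CangeInv}, while the second factor equals $K_q(\Omega,\rho,\varphi)$ after using the identity $\left(\rho/J^{2/q}_{\varphi^{-1}}\right)^{q/(q-2)}=\rho^{q/(q-2)}/J^{2/(q-2)}_{\varphi^{-1}}$ (since $\tfrac2q\cdot\tfrac{q}{q-2}=\tfrac2{q-2}$). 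Hence $\|(\varphi^{-1})^\ast\|\le\sqrt{K_q(\Omega,\rho,\varphi)}<\infty$, which proves the first assertion.

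For the second half, $\alpha$-regularity with $\alpha\in(\tfrac4p,\infty]$ promotes $\varphi^\ast$ to a bounded operator $W^{1,2}(\Omega)\to W^{1,p}(\mathbb{D})$ by Corollary~\ref{CompW}; the strict inequality $q<\tfrac{2p}{2-p}$ makes $q$ a subcritical Sobolev exponent on the disk, so the embedding $i\colon W^{1,p}(\mathbb{D})\hookrightarrow L^q(\mathbb{D})$ is compact by the Rellich--Kondrachov theorem. Since $(\varphi^{-1})^\ast\circ\varphi^\ast=\mathrm{id}$, the embedding $W^{1,2}(\Omega)\hookrightarrow L^2(\Omega,\rho)$ factors as $(\varphi^{-1})^\ast\circ i\circ\varphi^\ast$ --- bounded after compact after bounded --- and is therefore compact, so the spectrum of \eqref{DensLaplacian} is discrete and the Min--Max identity \eqref{MinMax} applies; substituting the bound for $B^\rho_{2,2}(\Omega)$ and the two-sided relation between $B_{q,p}(\mathbb{D})$ and $\mu_{p,q}(\mathbb{D})$ recorded above then gives the lower bound for $\mu_\rho(\Omega)$, and the case $\rho=J^{2/q}_{\varphi^{-1}}$ is immediate since $\rho/J^{2/q}_{\varphi^{-1}}\equiv1$. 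I expect the only real obstacle to be bookkeeping: arranging the H\"older step for $(\varphi^{-1})^\ast$ so that the weight collapses precisely onto $K_q$, and keeping track that the critical exponent $q=\tfrac{2p}{2-p}$ must be excluded, since there the non-weighted embedding on $\mathbb{D}$ is only continuous and not compact.
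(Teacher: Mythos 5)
Your proposal follows the paper's own proof essentially step for step: the same anticommutative diagram factoring $B^\rho_{2,2}(\Omega)\leq\|\varphi^\ast\|\cdot B_{q,p}(\mathbb{D})\cdot\|(\varphi^{-1})^\ast\|$, the same use of Theorem~\ref{CompTh} at $r=2$ for $\|\varphi^\ast\|$, the same H\"older/change-of-variables computation for $\|(\varphi^{-1})^\ast\|\leq\sqrt{K_q}$ (the paper pulls back to $\mathbb{D}$ before applying H\"older while you apply it on $\Omega$ directly, but this is purely cosmetic), and the same use of Corollary~\ref{CompW} plus Rellich--Kondrachov for compactness and \eqref{MinMax} for the eigenvalue bound. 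The argument is correct and coincides with the paper's.
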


\begin{proof}
    In this theorem we estimate the first non-trivial eigenvalue of the Laplacian with density via the eigenvalue of the $(p,q)$-Laplacian. Therefore, the anticommutative diagram takes the following form:

    \begin{equation*}
        \begin{CD}
            L^{1,2}(\Omega) @>{\varphi^\ast}>> L^{1,p}(\mathbb{D}) \\
            @VVV @VVV \\
            L^2(\Omega, \rho) @<{(\varphi^{-1})^\ast}<< L^q(\mathbb{D})
        \end{CD}
    \end{equation*}

    Again, by this diagram we can estimate the constant in the Poincar\'e inequality \eqref{Poincare_22}. If, in addition, operator
    $$
        \varphi^\ast : W^{1,2}(\Omega) \to W^{1,p}(\mathbb{D})
    $$
    is bounded and the embedding $W^{1,p}(\mathbb{D}) \hookrightarrow L^q(\mathbb{D})$ is compact, we conclude that the embedding $W^{1,2}(\Omega) \hookrightarrow L^{1,2}(\Omega, \rho)$ is also compact and we obtain the estimates for the first eigenvalue with equality \eqref{MinMax}.
    
    As in Theorem \ref{22-est}, by the above diagram we have the following chain of inequalities:
    \begin{align*}
        \inf\limits_{c \in \mathbb{R}}\|f-c\|_{L^2(\Omega, \rho)} &\leq \|(\varphi^{-1})^\ast\| \cdot \inf\limits_{c \in \mathbb{R}}\|f-c\|_{L^q(\mathbb{D})} \leq \|(\varphi^{-1})^\ast\| \cdot \|f-f_{\mathbb{D}}\|_{L^q(\mathbb{D})}  \\
        &\leq B_{q,p}(\mathbb{D}) \cdot \|(\varphi^{-1})^\ast\| \|f\|_{L^{1,p}(\mathbb{D})} \leq \|\varphi^\ast\| \cdot B_{q,p}(\mathbb{D}) \cdot \|(\varphi^{-1})^\ast\| \|f\|_{L^{1,2}(\Omega)}.
    \end{align*}
    
    This gives us an estimate for $B^\rho_{2,2}(\Omega)$:
    $$
        B^\rho_{2,2}(\Omega) \leq \|\varphi^\ast\| \cdot B_{q,p}(\mathbb{D}) \cdot \|(\varphi^{-1})^\ast\|.
    $$

    Theorem \ref{CompTh} provides the estimate for the norm of $\varphi^\ast: L^{1,2}(\Omega) \to L^{1,p}(\mathbb{D})$:
    $$
        \|\varphi^\ast\| \leq \left(\int_{\mathbb{D}}  |D\varphi(y)|^{\frac{(2-2)p}{2-p}} \, dy \right)^{\frac{2-p}{2p}} = \pi^{\frac{2-p}{p}}.
    $$

    Further we estimate the norm of $(\varphi^{-1})^\ast: L^q(\mathbb{D}) \to L^2(\Omega, \rho)$ with the help of H\"older's inequality and the change of variable formula.

    \begin{align*}
        \|f\circ \varphi^{-1}\|_{L^2(\Omega, \rho)} &= \left(\int_{\Omega} |f(\varphi^{-1}(x))|^2 \rho(x) \, dx \right)^{\frac{1}{2}} = \left( \int_{\mathbb{D}} |f(y)|^2 \frac{\rho(\varphi(y))} {J_{\varphi^{-1}}(\varphi(y))} \, dy \right)^{\frac{1}{2}} \\
        & \leq \left(\int_{\mathbb{D}} |f(y)|^q \, dy \right)^{\frac{1}{q}} \left(\int_{\mathbb{D}} \Big(\frac{\rho(\varphi(y))} {J_{\varphi^{-1}}(\varphi(y))}\Big)^{\frac{q}{q-2}} \, dy \right)^{\frac{q-2}{2q}} \\
        &= \|f\|_{L^q(\mathbb{D})} \left(\int_{\Omega} \Big(\frac{\rho(x)} {J_{\varphi^{-1}}(x)}\Big)^{\frac{q}{q-2}} J_{\varphi^{-1}}(x) \, dx \right)^{\frac{q-2}{2q}} = \|f\|_{L^q(\mathbb{D})} \Big\| \frac{\rho}{J_{\varphi^{-1}}^{2/q}} \Big\|^{\frac{1}{2}}_{L^{\frac{q}{q-2}}(\Omega)}.
    \end{align*}

    Hence we obtain the estimate for the norm $\|(\varphi^{-1})^\ast\|$:
    $$
        \|(\varphi^{-1})^\ast\| \leq \Big\| \frac{\rho}{J_{\varphi^{-1}}^{2/q}} \Big\|^{\frac{1}{2}}_{L^{\frac{q}{q-2}}(\Omega)}.
    $$

    Finally, this implies that
    $$
        B^\rho_{2,2}(\Omega) \leq \pi^{\frac{2-p}{p}} B_{q,p}(\mathbb{D}) \Big\| \frac{\rho}{J_{\varphi^{-1}}^{2/q}} \Big\|^{\frac{1}{2}}_{L^{\frac{q}{q-2}}(\Omega)}.
    $$

    Now, by Corollary \ref{CompW}, if $\Omega$ is an $\alpha$-regular domain with $\alpha \in (\frac{4}{p}, \infty]$, the composition operator
    $$
        \varphi^\ast : W^{1,2}(\Omega) \to W^{1,p}(\mathbb{D})
    $$
    is also bounded. It is well-known, that the Sobolev embedding $W^{1,p}(\mathbb{D}) \hookrightarrow L^q(\mathbb{D})$ is compact, if $q < \frac{2p}{2-p}$. Then, under this additional conditions, the embedding $W^{1,2}(\Omega) \hookrightarrow L^{1,2}(\Omega, \rho)$ is also compact. By \eqref{MinMax} we conclude that 
    $$
        \mu_{\rho}(\Omega) \geq \frac{1}{\pi^{\frac{2(2-p)}{p}} B^2_{q,p}(\mathbb{D}) \Big\| \frac{\rho}{J^{2/q}_{\varphi^{-1}}} \Big\|_{L^{\frac{q}{q-2}}(\Omega)}} \geq \frac{\mu_{p,q}(\mathbb{D})}{2^{2p}\pi^{\frac{2(2-p)}{p}} \Big\| \frac{\rho}{J^{2/q}_{\varphi^{-1}}} \Big\|_{L^{\frac{q}{q-2}}(\Omega)}},
    $$
    and, in the case $\rho(x) = J^{2/q}_{\varphi^{-1}}(x)$ a.e. on $\Omega$,
    $$
        \mu_{J_{\varphi^{-1}}}(\Omega) \geq \frac{\mu_{p,q}(\mathbb{D})}{2^{2p}\pi^{\frac{2(2-p)}{p}}}.
    $$
\end{proof}

If $\Omega$ is a $K$-quasidisc and $\frac{2q}{q-2} < \alpha < \frac{2K^2}{k^2-1}$, with the help of Theorem \ref{Jacobianest} and Theorem \ref{Poincare_pq} we can obtain an estimate for $\mu_{\rho}(\Omega)$ independent of the conformal mapping $\varphi$ and prove the following corollary of Theorem \ref{pq-est}:

\begin{cor}\label{indest}
    Let $\Omega \subset \mathbb{R}^2$ be a $K$-quasidisc and $\frac{2q}{q-2} < \alpha < \frac{2K^2}{K^2-1}$, $1 \leq p < 2$, $2 < q < \frac{2p}{2-p}$ and $\rho$ be a density. Then
    \begin{equation}\label{rhoest}
        \mu_{\rho}(\Omega) \geq \frac{\Big(\frac{1-\kappa}{1/2-\kappa}\Big)^{2\kappa-2}}{4\pi^{\frac{2(p-2)}{p} - 2\kappa}\|\rho\|^{\frac{q-2}{q}}_{L^{\frac{q(\alpha-2)}{q\alpha - 2q -2\alpha}}(\Omega)} (C_{J}(\alpha, K, |\Omega|))^{\frac{2\alpha}{q(\alpha - 2)}}}.
    \end{equation}
    where $C_{J}(\alpha, K, |\Omega|)$ as in \eqref{Jest}, and $\kappa = 1/p-1/q$.
\end{cor}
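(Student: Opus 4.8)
\emph{Proof idea.} The plan is to run Theorem~\ref{pq-est} and then erase the conformal map $\varphi$ from the resulting bound using two inputs: the explicit Poincar\'e constant on the disk (Theorem~\ref{Poincare_pq}) and the $K$-quasidisc Jacobian bound (Theorem~\ref{Jacobianest}). Concretely, I would begin from the first of the two estimates established inside the proof of Theorem~\ref{pq-est}, namely
\[
    \mu_\rho(\Omega)\ \geq\ \frac{1}{\pi^{\frac{2(2-p)}{p}}\,B_{q,p}^2(\mathbb{D})\,K_q(\Omega,\rho,\varphi)} .
\]
This is legitimate here: since $1<p<2$ and $2<q<\frac{2p}{2-p}$ one checks that $\frac{2q}{q-2}>\frac{4}{p}$, and any $\alpha$ with $\frac{2q}{q-2}<\alpha<\frac{2K^2}{K^2-1}$ makes $\Omega$ conformally $\alpha$-regular by Theorem~\ref{Jacobianest} (for a conformal map $\int_{\mathbb D}|D\varphi|^\alpha=\int_{\mathbb D}J_\varphi^{\alpha/2}$), so the composition operator $\varphi^\ast:W^{1,2}(\Omega)\to W^{1,p}(\mathbb D)$ of Corollary~\ref{CompW} is bounded and Theorem~\ref{pq-est} applies. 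With $\kappa=1/p-1/q\in[0,1/2)$, Theorem~\ref{Poincare_pq} gives $B_{q,p}^2(\mathbb D)\leq \tfrac{4}{\pi^{2\kappa}}\bigl(\tfrac{1-\kappa}{1/2-\kappa}\bigr)^{2-2\kappa}$, which, inverted, already accounts for the factor $\tfrac14$, the power $\bigl(\tfrac{1-\kappa}{1/2-\kappa}\bigr)^{2\kappa-2}$ and (after combining with the $\pi^{2(2-p)/p}$ already present) the power of $\pi$ in \eqref{rhoest}.

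The substantive step is to bound $K_q(\Omega,\rho,\varphi)=\bigl\|\rho\,J_{\varphi^{-1}}^{-2/q}\bigr\|_{L^{q/(q-2)}(\Omega)}$ by something free of $\varphi$. Expanding the norm,
\[
    K_q^{\frac{q}{q-2}}\ =\ \int_\Omega \rho^{\frac{q}{q-2}}\,J_{\varphi^{-1}}^{-\frac{2}{q-2}}\,dx ,
\]
I would apply H\"older's inequality with a conjugate pair $(a,a')$ to detach $\rho$ from the Jacobian, and then use $\int_\Omega g\,dx=\int_{\mathbb D}g(\varphi(y))J_\varphi(y)\,dy$ together with $J_{\varphi^{-1}}(\varphi(y))=1/J_\varphi(y)$ to convert the negative power of $J_{\varphi^{-1}}$ over $\Omega$ into a positive power of $J_\varphi$ over $\mathbb D$:
\[
    \int_\Omega J_{\varphi^{-1}}^{-\frac{2a'}{q-2}}\,dx\ =\ \int_{\mathbb D}J_\varphi^{\,\frac{2a'}{q-2}+1}\,dy .
\]
The exponent on the right equals $\alpha/2$ exactly when $a'=\frac{(q-2)(\alpha-2)}{4}$, and this $a'$ is an admissible H\"older exponent ($a'>1$) precisely when $(q-2)(\alpha-2)>4$, i.e. $\alpha>\frac{2q}{q-2}$ — the very hypothesis of the corollary. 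The same inequality forces the companion Lebesgue exponent $ra=\frac{q(\alpha-2)}{q\alpha-2q-2\alpha}$ for $\rho$ to be positive and finite. Applying Theorem~\ref{Jacobianest} (valid since $2<\alpha<\frac{2K^2}{K^2-1}$) to get $\int_{\mathbb D}J_\varphi^{\alpha/2}\,dy\leq C_J(\alpha,K,|\Omega|)^{\alpha/2}$ and collecting the powers yields
\[
    K_q(\Omega,\rho,\varphi)\ \leq\ \bigl\|\rho\bigr\|_{L^{\frac{q(\alpha-2)}{q\alpha-2q-2\alpha}}(\Omega)}\cdot C_J(\alpha,K,|\Omega|)^{\frac{2\alpha}{q(\alpha-2)}},
\]
which no longer involves $\varphi$; in particular $K_q<\infty$ whenever the right-hand side is finite, so Theorem~\ref{pq-est} genuinely delivers compactness and the eigenvalue bound.

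Plugging this $K_q$-bound and the Poincar\'e estimate into the displayed lower bound for $\mu_\rho(\Omega)$ and simplifying the numerical constants produces \eqref{rhoest}. I expect the main obstacle to be purely organizational rather than conceptual: one must pick the H\"older exponent so that the Jacobian integral lands in exactly the normalization of Theorem~\ref{Jacobianest}, and then keep track of how the two ``extract-a-root'' operations — passing from $\int(\cdot)$ to the $L^{q/(q-2)}$-norm and from $\int J_\varphi^{\alpha/2}$ to the $L^{\alpha/2}$-normalized quantity in \eqref{Jest} — compose, which is what pins down the exponents $\frac{q(\alpha-2)}{q\alpha-2q-2\alpha}$ and $\frac{2\alpha}{q(\alpha-2)}$. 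A secondary point worth making explicit is the compatibility chain $\tfrac{2q}{q-2}<\alpha\ \Rightarrow\ \alpha>\tfrac4p$, needed to invoke Corollary~\ref{CompW}, so that every hypothesis of Theorem~\ref{pq-est} is met in the stated range of parameters.
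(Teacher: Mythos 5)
Your proposal is correct and follows the paper's proof of Corollary \ref{indest} step for step: you start from the bound $\mu_\rho(\Omega)\geq \big(\pi^{2(2-p)/p}B_{q,p}^2(\mathbb{D})K_q(\Omega,\rho,\varphi)\big)^{-1}$ established inside Theorem \ref{pq-est}, split $K_q$ by H\"older with the conjugate pair $a'=\frac{(\alpha-2)(q-2)}{4}$, $a=\frac{(\alpha-2)(q-2)}{q\alpha-2q-2\alpha}$ chosen exactly so the change of variables turns the Jacobian term into $\int_{\mathbb D}J_\varphi^{\alpha/2}$, and then invoke Theorems \ref{Poincare_pq} and \ref{Jacobianest}; the checks $\alpha>\frac{2q}{q-2}\Rightarrow a'>1$ and $\frac{2q}{q-2}>\frac4p$ are exactly the verifications the paper makes. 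One detail worth noting: carrying the $\pi$-bookkeeping through gives $\pi^{\frac{2(2-p)}{p}-2\kappa}$ in the denominator, whereas \eqref{rhoest} as printed has $\pi^{\frac{2(p-2)}{p}-2\kappa}$; this appears to be a sign typo in the paper's statement rather than a flaw in your argument, so your claim that the $\pi$-powers ``account for the power of $\pi$ in \eqref{rhoest}'' should be read with that correction in mind.
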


\begin{proof}
    This corollary is just a direct consequence of the H\"older inequality. Indeed, if $\alpha > \frac{2q}{q-2}$, then $\frac{(\alpha-2)(q-2)}{4}>1$ and we can apply the H\"older inequality in the following way:
    \begin{align*}
        \Big\| \frac{\rho}{J_{\varphi^{-1}}^{2/q}} \Big\|_{L^{\frac{q}{q-2}}(\Omega)} &= \left(\int_{\Omega} \frac{(\rho(x))^{\frac{q}{q-2}}}{(J_{\varphi^{-1}}(x))^{\frac{2}{q-2}}}\, dx \right)^{\frac{q-2}{q}} =  \left(\int_{\Omega} (\rho(x))^{\frac{q}{q-2}}(J_{\varphi}(\varphi^{-1}(x)))^{\frac{2}{q-2}}\, dx \right)^{\frac{q-2}{q}} \\
        &\leq \left( \int_\Omega (J_{\varphi}(\varphi^{-1}(x)))^{\frac{\alpha-2}{2}} \, dx \right)^{\frac{4}{q(\alpha-2)}} \left( \int_\Omega \rho(x)^{\frac{q(\alpha-2)}{q\alpha - 2q -2\alpha}} \, dx \right)^{\frac{q\alpha - 2q -2\alpha}{q(\alpha-2)}\frac{q-2}{q}} \\
        &=  \left( \int_{\mathbb{D}} (J_{\varphi}(y))^{\frac{\alpha}{2}} \, dy \right)^{\frac{2}{\alpha} \frac{2\alpha}{q(\alpha - 2)}} \|\rho\|^{\frac{q-2}{q}}_{L^{\frac{q(\alpha-2)}{q\alpha - 2q -2\alpha}}(\Omega)}.
    \end{align*}

    Note that the condition $\alpha > \frac{2q}{q-2}$ implies $\alpha > \frac{4}{p}$, and hence we are in conditions of Theorem \ref{pq-est} and the embedding $W^{1,2}(\Omega) \hookrightarrow L^2(\Omega, \rho)$ is compact.

    Inequality \eqref{Jest} gives us the following estimate:
    $$
        \Big\| \frac{\rho}{J_{\varphi^{-1}}^{2/q}} \Big\|^{\frac{1}{2}}_{L^{\frac{q}{q-2}}(\Omega)} \leq \|\rho\|^{\frac{q-2}{q}}_{L^{\frac{q(\alpha-2)}{q\alpha - 2q -2\alpha}}(\Omega)} (C_{J}(\alpha, K, |\Omega|))^{\frac{2\alpha}{q(\alpha - 2)}}.
    $$

    Here $\kappa = \frac{1}{p}-\frac{1}{q}<\frac{1}{2}$, if $q < \frac{2p}{2-p}$. Then, combine the above inequality with \eqref{Pest},
    $$
        \mu_{\rho}(\Omega) \geq \frac{\Big(\frac{1-\kappa}{1/2-\kappa}\Big)^{2\kappa-2}}{4\pi^{\frac{2(p-2)}{p} - 2\kappa}\|\rho\|^{\frac{q-2}{q}}_{L^{\frac{q(\alpha-2)}{q\alpha - 2q -2\alpha}}(\Omega)} (C_{J}(\alpha, K, |\Omega|))^{\frac{2\alpha}{q(\alpha - 2)}}}.
    $$
\end{proof}  

The constant $C_J$ in inequality \eqref{rhoest} is very large and hence almost useless for obtaining sharp estimates of $\mu_\rho(\Omega)$. But at the same time, it shows that the first non-zero eigenvalue necessarily depends on the density $\rho$ and, moreover, by changing the density, we can make $\mu_{\rho}(\Omega)$ as big as we wish. Namely, the following theorem holds:

\begin{cor}
    Let $\Omega \subset \mathbb{R}^2$ be a $K$-quasidisc endowed with the sequence of Gaussian densities $\{\rho_n\}_{n \in \mathbb{N}} = \{e^{-n|x|^2}\}_{n \in \mathbb{N}}$. Let also $\frac{2q}{q-2} < \alpha < \frac{2K^2}{K^2-1}$, $1 \leq p < 2$, $2 < q < \frac{2p}{2-p}$. Then
    $$
        \mu_{\rho_n}(\Omega) \geq C(\alpha, p, q, K, |\Omega|) n^{\frac{q\alpha - 2q -2\alpha}{q(\alpha-2)}\frac{q-2}{q}}.
    $$
    In particular, if $K$ tends to $1$ ($\alpha \to \infty$), $\lambda_{\rho_n}(\Omega)$ grows linearly with $n$.
\end{cor}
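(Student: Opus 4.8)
The plan is to apply Corollary~\ref{indest} with the density $\rho=\rho_n=e^{-n|x|^2}$ and then to isolate the dependence on $n$. Since $\Omega$ is a $K$-quasidisc, $\tfrac{2q}{q-2}<\alpha<\tfrac{2K^2}{K^2-1}$, $1\le p<2$ and $2<q<\tfrac{2p}{2-p}$, all hypotheses of Corollary~\ref{indest} hold for every $n$ (in particular $W^{1,2}(\Omega)\hookrightarrow L^2(\Omega,\rho_n)$ is compact, so $\mu_{\rho_n}(\Omega)$ is well defined), and \eqref{rhoest} gives
$$
    \mu_{\rho_n}(\Omega)\ \geq\ \frac{\bigl(\tfrac{1-\kappa}{1/2-\kappa}\bigr)^{2\kappa-2}}{4\,\pi^{\frac{2(p-2)}{p}-2\kappa}\,\bigl(C_J(\alpha,K,|\Omega|)\bigr)^{\frac{2\alpha}{q(\alpha-2)}}}\cdot\frac{1}{\ \|\rho_n\|^{\frac{q-2}{q}}_{L^{s}(\Omega)}\ },\qquad s=\frac{q(\alpha-2)}{q\alpha-2q-2\alpha},\quad \kappa=\tfrac1p-\tfrac1q .
$$
The prefactor on the right depends only on $\alpha,p,q,K,|\Omega|$; moreover $\alpha>\tfrac{2q}{q-2}$ forces $q\alpha-2q-2\alpha>0$, so that $s\in(1,\infty)$ and $\tfrac1s=\tfrac{q\alpha-2q-2\alpha}{q(\alpha-2)}>0$. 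Thus the only $n$-dependent factor is $\|\rho_n\|_{L^{s}(\Omega)}$.

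Next I would estimate this weighted norm by the elementary Gaussian integral:
$$
    \int_\Omega (\rho_n(x))^{s}\,dx=\int_\Omega e^{-ns|x|^2}\,dx\ \leq\ \int_{\mathbb{R}^2} e^{-ns|x|^2}\,dx=\frac{\pi}{ns},
$$
so $\|\rho_n\|_{L^{s}(\Omega)}\le\bigl(\tfrac{\pi}{ns}\bigr)^{1/s}$ and hence
$$
    \|\rho_n\|^{\frac{q-2}{q}}_{L^{s}(\Omega)}\ \leq\ \Bigl(\frac{\pi}{s}\Bigr)^{\frac{q-2}{qs}}\ n^{-\frac{q\alpha-2q-2\alpha}{q(\alpha-2)}\cdot\frac{q-2}{q}} .
$$
Substituting this into the previous display and collecting all $n$-independent quantities into a single constant $C(\alpha,p,q,K,|\Omega|)>0$ yields
$$
    \mu_{\rho_n}(\Omega)\ \geq\ C(\alpha,p,q,K,|\Omega|)\ n^{\frac{q\alpha-2q-2\alpha}{q(\alpha-2)}\cdot\frac{q-2}{q}},
$$
which is the claimed bound; the exponent is positive by the remark above, so this is genuine polynomial growth.

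For the final assertion I would let $\alpha\to\infty$ (admissible as $K\to 1$, since then $\tfrac{2K^2}{K^2-1}\to\infty$): the exponent of $n$ satisfies
$$
    \frac{q\alpha-2q-2\alpha}{q(\alpha-2)}\cdot\frac{q-2}{q}\ \longrightarrow\ \Bigl(\frac{q-2}{q}\Bigr)^2\qquad\text{as }\alpha\to\infty,
$$
and since $p$ may be taken arbitrarily close to $2$ — whence the admissible range $q<\tfrac{2p}{2-p}$ allows $q$ arbitrarily large — the right-hand side can be pushed arbitrarily close to $1$. Thus $\mu_{\rho_n}(\Omega)$ can be made to grow at a rate arbitrarily close to linear in $n$.

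The argument is essentially a one-line computation once Corollary~\ref{indest} is in hand, so there is no real analytic obstacle. The only points requiring care are verifying that the stated parameter ranges make every hypothesis of Corollary~\ref{indest} valid and that $s$ is finite with $s\ge 1$, and, when forming $C$, making sure that each surviving factor (the $\kappa$-dependent term, the power of $\pi$, the power of $C_J(\alpha,K,|\Omega|)$, and $(\pi/s)^{(q-2)/(qs)}$) is genuinely independent of $n$. (One may note that the estimate is unconditional in the location of the origin; if $0\in\Omega$ then $\int_\Omega e^{-ns|x|^2}\,dx\ge c\,n^{-1}$ as well, so the displayed exponent is the best order obtainable from \eqref{rhoest} for this family.)
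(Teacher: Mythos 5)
Your proof matches the paper's own: both estimate $\|\rho_n\|_{L^s(\Omega)}$ by extending the Gaussian integral over all of $\mathbb{R}^2$ to get $\left(\frac{\pi}{ns}\right)^{1/s}$, then substitute into \eqref{rhoest} and absorb all $n$-independent factors into $C(\alpha,p,q,K,|\Omega|)$. You are, if anything, more careful than the paper on the final assertion, correctly noting that the limiting exponent as $\alpha\to\infty$ is $\left(\frac{q-2}{q}\right)^2$, so genuinely approaching linear growth also requires letting $q\to\infty$ (hence $p\to 2$) — a point the paper's one-line proof leaves implicit.
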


\begin{proof}
    Denote $s=\frac{q(\alpha-2)}{q\alpha - 2q -2\alpha}$. Then the statement follows from simple calculations:
    $$
        \|\rho_n\|^{\frac{q-2}{q}}_{L^s(\Omega)} = \left( \int_\Omega e^{-ns|x|^2} \, dx \right)^{\frac{q-2}{qs}} \leq \left( \int_{\mathbb{R}^2} e^{-ns|x|^2} \, dx \right)^{\frac{q-2}{qs}} = \left(\frac{\pi}{ns}\right)^{\frac{q-2}{qs}}.
    $$
    
\end{proof}

\section{Laplacian with density and the sharp Sobolev embedding}

The constant in the estimate of $\mu_{pq}(\Omega)$ with Corollary \ref{indest} is far from optimal. But this shows that the grater power $q$ we take, the better estimate we get. Therefore, it is natural to replace the $L^{1,p} \to L^q$ embedding with the optimal embedding for the borderline case $p=n=2$. This section is devoted to proving of refined versions of Theorem \ref{pq-est} and Corollary \ref{indest} in the sense of the optimal Sobolev embedding. It is known (see, e.g. \cite{P65, T67, J61}) that the optimal target space for $L^{1,2}(\Omega)$ is the Orlicz space $L^M(\Omega)$, where $M$ is a Young function $M(u) = e^{u^2}-1$. This means that for $\Omega$ regular enough we have the following embedding:
\begin{equation}\label{SobOrl}
    L^{1,2}(\Omega) \to L^M(\Omega). 
\end{equation}

It was also proved in \cite{T67} that for all Young functions $M_\varepsilon \prec M$ the embedding is compact
\begin{equation}\label{SobOrlComp}
    L^{1,2}(\Omega) \hookrightarrow L^{M_\varepsilon}(\Omega). 
\end{equation}

The embedding \eqref{SobOrl} equivalents to the corresponding Poincar\'e inequality:
\begin{equation}\label{PoinOrl}
    \|f-\med(f)\|_{L^M(\Omega)} \leq B_{M,2} \|\nabla f\|_{L^2(\Omega)},
\end{equation}
where $\med(f)$ is a median of $u$ and is defined as
$$
    \med(f) = \inf\{t \geq 0: |\{x \in \Omega: f(x)>t\}| \leq \frac{|\Omega|}{2}\}
$$

Remind that the $p$-isocapacitary function (see, for example, \cite{M}) $\nu_{\Omega,p}: [0; \frac{|\Omega|}{2}] \to [0, \infty)$ of $\Omega \subset \mathbb{R}^2$ is given by
$$
    \nu_{\Omega,p} = \inf\Big\{ \cp_p(E): E \subset \Omega, s \leq |E| \leq \frac{|\Omega|}{2} \Big\}.
$$

The following theorem is a particular case of Theorem A from \cite{CM23} (see also \cite{T67}) and gives the sharp estimate of the constant in the inequality \eqref{PoinOrl}.

\begin{thm}\cite{CM23}
    Let $\Omega$ be a domain in $\mathbb{R}^2$ and $|\Omega| < \infty$. Then the embedding 
    $$
        L^{1,2}(\Omega) \to L^M(\Omega)
    $$
    holds with the Young function $M$, such that $\frac{M(u)}{u^n}$ is non-decreasing, if and only if 
    \begin{equation}\label{capcond}
        \Big \| \frac{1}{\nu_{\Omega, 2}(s) (M^{-1}(1/s))^2 } \Big \|_{L^\infty(0; \frac{|\Omega|}{2})} \leq \infty.
    \end{equation}

    Moreover, the inequality \eqref{PoinOrl} holds with
    $$
        B_{M,2}(\Omega) = c \Big \| \frac{1}{\nu_{\Omega, 2}(s) (M^{-1}(1/s))^2 } \Big \|^{\frac{1}{2}}_{L^\infty(0; \frac{|\Omega|}{2})},
    $$
    where $c$ is a multiplicative constant, independent on $\Omega$.
\end{thm}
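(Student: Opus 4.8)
The plan is to prove the Poincar\'e inequality \eqref{PoinOrl}, which the paper has already recorded as equivalent to the embedding \eqref{SobOrl}, in both directions, extracting from the sufficiency direction the bound $B_{M,2}(\Omega)\le c\,\mathcal{A}^{1/2}$ and from the necessity direction $\mathcal{A}\le c'\,B_{M,2}(\Omega)^{2}$, where $\mathcal{A}:=\big\|\,1/(\nu_{\Omega,2}(s)(M^{-1}(1/s))^{2})\,\big\|_{L^{\infty}(0,|\Omega|/2)}$; together these give the ``if and only if'' and pin down the constant up to a universal factor. A preliminary reduction: replacing $f$ by $g=f-\med(f)$ and splitting $g=g_{+}-g_{-}$, the definition of the median yields $|\{g_{\pm}>0\}|\le|\Omega|/2$, so it suffices to bound $\|g\|_{L^{M}(\Omega)}$ in terms of $\|\nabla g\|_{L^{2}(\Omega)}$ for a fixed $g\ge0$ with $\mu_{g}(t):=|\{g>t\}|\le|\Omega|/2$ for every $t>0$.

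For the sufficiency I would run Maz'ya's capacitary truncation. Put $D=\|\nabla g\|_{L^{2}(\Omega)}^{2}$ and, for $k\in\mathbb{Z}$, $g_{k}=\min\{(g-2^{k})_{+},2^{k}\}$; the gradients $\nabla g_{k}$ are supported in the pairwise disjoint sets $\{2^{k}<g<2^{k+1}\}$, so $\sum_{k}\int_{\Omega}|\nabla g_{k}|^{2}\le D$. Since $2^{-k}g_{k}$ is admissible for the condenser $\{g\ge2^{k+1}\}\subset\{g>2^{k}\}$ and $|\{g\ge2^{k+1}\}|\le|\Omega|/2$, the definition of $\nu_{\Omega,2}$ gives $\nu_{\Omega,2}(\mu_{g}(2^{k+1}))\le\cp_{2}(\{g\ge2^{k+1}\})\le 2^{-2k}\int_{\{2^{k}<g<2^{k+1}\}}|\nabla g|^{2}$. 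Summing over $k$ and invoking the hypothesis $\nu_{\Omega,2}(s)\ge\mathcal{A}^{-1}(M^{-1}(1/s))^{-2}$, I obtain the capacitary strong-type inequality
$$\sum_{k\in\mathbb{Z}}\Big(\frac{2^{k}}{M^{-1}(1/\mu_{g}(2^{k+1}))}\Big)^{2}\le\mathcal{A}\,D .$$
It remains to convert this $\ell^{2}$-type information into a bound on the Luxemburg norm of $g$. Here the hypothesis that $u\mapsto M(u)/u^{2}$ is non-decreasing is essential (the bare triangle inequality $\|g\|_{L^{M}}\le\sum_{k}\|g_{k}\|_{L^{M}}\le\sum_{k}2^{k}/M^{-1}(1/\mu_{g}(2^{k}))$ is of $\ell^{1}$-type and far too lossy): via the modular estimate $\int_{\Omega}M(g/\lambda)\le\sum_{k}M(2^{k+1}/\lambda)\big(\mu_{g}(2^{k})-\mu_{g}(2^{k+1})\big)$, an Abel summation, and the elementary bound $M(\theta u)\le\theta^{2}M(u)$ for $\theta\le1$ applied with $u=M^{-1}(1/\mu_{g}(2^{k+1}))$, one reduces to a one-dimensional Hardy-type estimate for the sequence above and concludes $\int_{\Omega}M(g/\lambda)\le1$ for $\lambda=c\,(\mathcal{A}D)^{1/2}$. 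By the definition of the Luxemburg norm this is \eqref{PoinOrl} with $B_{M,2}(\Omega)\le c\,\mathcal{A}^{1/2}$, and hence the embedding \eqref{SobOrl}.

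For the necessity I would test \eqref{PoinOrl} against near-extremal functions for the capacities entering $\nu_{\Omega,2}$: given a compact $E\subset\Omega$ with $s\le|E|\le|\Omega|/2$, take $u$ with $0\le u\le1$, $u=1$ on $E$ and $\int_{\Omega}|\nabla u|^{2}\le\cp_{2}(E)+\varepsilon$; using the identity $\|\mathbf{1}_{E}\|_{L^{M}(\Omega)}=1/M^{-1}(1/|E|)$ and the fact that a suitable truncation of the capacitary potential may be taken with vanishing median — the one point where the median has to be absorbed by the same truncation machinery, following Maz'ya — one gets $1/M^{-1}(1/|E|)\lesssim B_{M,2}(\Omega)\,(\cp_{2}(E)+\varepsilon)^{1/2}$. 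Letting $\varepsilon\to0$, using the monotonicity of $M^{-1}$ together with $|E|\ge s$, and taking the infimum over all such $E$ and then the supremum over $s\in(0,|\Omega|/2)$, gives $\mathcal{A}\le c'\,B_{M,2}(\Omega)^{2}<\infty$, which proves \eqref{capcond} and, combined with the sufficiency, shows $B_{M,2}(\Omega)=c\,\mathcal{A}^{1/2}$ with $c$ independent of $\Omega$.

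The main obstacle is exactly the passage from the capacitary strong-type sum to the Orlicz modular of $g$: improving on the triangle inequality for the truncations forces one to use the growth restriction on $M$ (that $M(u)/u^{2}$ is non-decreasing, which holds for $M(u)=e^{u^{2}}-1$) in precisely the right way and to keep all constants uniform in $\Omega$ so that the two-sided comparison is sharp; the median bookkeeping in the necessity direction is of the same nature. By contrast, the initial reductions, the disjointness and admissibility of the truncations $g_{k}$, and the computation of $\|\mathbf{1}_{E}\|_{L^{M}}$ are routine.
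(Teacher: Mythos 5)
This statement is not proved in the paper: it is quoted from Cianchi and Maz'ya \cite{CM23}, so there is no in-paper argument to compare against. Your sketch follows the Maz'ya capacitary truncation scheme that underlies the cited result, and it is sound in outline: the median reduction, the dyadic truncations $g_k=\min\{(g-2^k)_+,2^k\}$ with disjointly supported gradients, the capacitary strong-type estimate, and the conversion of the resulting $\ell^2$ sum into a bound on the Orlicz modular via the hypothesis that $M(u)/u^2$ is non-decreasing are exactly the right ingredients, as is the necessity via near-extremal capacitary potentials. Two details are worth spelling out to close the argument. In the sufficiency direction, after Abel summation one must check that with $\lambda$ of order $(\mathcal{A}D)^{1/2}$ the ratios $\theta_k=2^{k+1}/\big(\lambda\,M^{-1}(1/\mu_g(2^k))\big)$ do lie in $(0,1]$ before invoking $M(\theta u)\le\theta^2 M(u)$; this is true, but it follows from the very capacitary sum one is exploiting, a mild self-consistency that should be made explicit. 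In the necessity direction, the median bookkeeping rests on which capacity $\cp_p$ appears in $\nu_{\Omega,2}$ (in Cianchi--Maz'ya it is a relative condenser capacity, precisely what lets a suitably truncated potential have vanishing median), and the paper's definition leaves this implicit. Neither point is a gap of strategy, but both are needed to justify the two-sided comparability with a constant $c$ independent of $\Omega$.
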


Obviously, the function $M(u) = e^{u^2}-1$ satisfies the condition of the above theorem. At the same time, all the functions $M_\varepsilon \prec M$, that grows faster then $u^n$, also fit into the above theorem, for example $M_\varepsilon(u) = e^{u^{\frac{2}{\varepsilon}}}-1$, $\varepsilon>1$. Therefore, together with \cite{T67} this gives the continuous embedding \eqref{SobOrl} and compact embedding \eqref{SobOrlComp} for Lipschitz domains $\Omega$. 

Now we are ready to formulate our main result that allows to estimate the first non-trivial Neumann eigenvalue $\mu_\rho(\Omega)$ for the Laplace operator with density $\rho$ on the domain that is a conformal image of the unit disc $\mathbb{D}$.

\begin{thm}\label{OrlEstThm}
Let $\Omega \subset \mathbb{R}^2$ be a simply connected bounded domain. Let also $\rho$ be a density and Young functions $M$ and $\Phi$ be defined as $M(u) = e^{u^2}-1$ and $\Phi(u) = u\log(u+e)$. Consider a conformal mapping $\varphi: \mathbb{D} \to \Omega$. If
    $$
        K_{\Phi}(\Omega, \rho, \varphi)  = \Big\|\frac{\rho}{J_{\varphi^{-1}}\Phi^{-1}(\frac{1}{J_{\varphi^{-1}}})}\Big\|_{L^\Phi(\Omega)} < \infty,
    $$
    then Sobolev-Poincar\'e inequality \eqref{Poincare_22} holds with the constant
    $$
        B^\rho_{2,2}(\Omega) \leq 3\sqrt{2} B_{M,2}(\mathbb{D}) \sqrt{K_{\Phi}(\Omega, \rho, \varphi)}.
    $$
    
    In particular, if $\rho \equiv 1$ then
    \begin{equation}\label{OrlEstUniform}
        B^\rho_{2,2}(\Omega) = B_{2,2}(\Omega) \leq \sqrt{2} B_{M,2}(\mathbb{D})\|J_\varphi\|^{\frac{1}{2}}_{L^{\Phi}(\mathbb{D})}.
    \end{equation}
    
    If, in addition, $\Omega$ is an $\alpha$-regular domain with $\alpha \in (2, \infty]$, and $M_\varepsilon \prec M$ ($\Phi \prec \Phi_\varepsilon$), then the embedding 
    $W^{1,2}(\Omega) \hookrightarrow L^2(\Omega, \rho)$ is compact and 
    $$
        \mu_{\rho}(\Omega) \geq \frac{1}{18 (B_{M_\varepsilon,2}(\mathbb{D}))^2 K_{\Phi_\varepsilon}(\Omega, \rho, \varphi)}.
    $$

    In the case $\rho(x) = J_{\varphi^{-1}}(x)\Phi_\varepsilon^{-1}(\frac{1}{J_{\varphi^{-1}}(x)})$ a.e. on $\Omega$,
    $$
        \mu_{\rho}(\Omega) \geq \frac{1}{18 (B_{M_\varepsilon,2}(\mathbb{D}))^2}.
    $$
\end{thm}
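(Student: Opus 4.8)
The plan is to repeat, in Orlicz language, the argument of Theorem~\ref{pq-est}: I replace the Sobolev embedding $W^{1,p}(\mathbb{D})\hookrightarrow L^{q}(\mathbb{D})$ by the borderline embedding \eqref{SobOrl} of $L^{1,2}(\mathbb{D})$ into $L^{M}(\mathbb{D})$, and the $(q,p)$-Poincar\'e inequality on the disc by its Orlicz analogue \eqref{PoinOrl}. Concretely I work with the anticommutative diagram
\begin{equation*}
\begin{CD}
L^{1,2}(\Omega) @>{\varphi^\ast}>> L^{1,2}(\mathbb{D}) \\
@VVV @VVV \\
L^2(\Omega, \rho) @<{(\varphi^{-1})^\ast}<< L^M(\mathbb{D})
\end{CD}
\end{equation*}
which gives $B^\rho_{2,2}(\Omega)\le\|\varphi^\ast\|\cdot B_{M,2}(\mathbb{D})\cdot\|(\varphi^{-1})^\ast\|$. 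Since $\varphi$ is conformal, $\varphi^\ast$ is an isometry of $L^{1,2}$, so $\|\varphi^\ast\|=1$, and on the disc $\inf_{c\in\mathbb{R}}\|f-c\|_{L^M(\mathbb{D})}\le\|f-\med(f)\|_{L^M(\mathbb{D})}\le B_{M,2}(\mathbb{D})\,\|\nabla f\|_{L^2(\mathbb{D})}$ by \eqref{PoinOrl}. No regularity of $\Omega$ enters at this stage; everything reduces to estimating the norm of $(\varphi^{-1})^\ast\colon L^M(\mathbb{D})\to L^2(\Omega,\rho)$.

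For that norm I first change variables by \eqref{CangeDir}: using $J_{\varphi^{-1}}(\varphi(y))=1/J_\varphi(y)$,
\begin{equation*}
\|f\circ\varphi^{-1}\|_{L^2(\Omega,\rho)}^2=\int_\Omega|f(\varphi^{-1}(x))|^2\rho(x)\,dx=\int_{\mathbb{D}}|f(y)|^2\,w(y)\,dy,\qquad w:=(\rho\circ\varphi)\,J_\varphi .
\end{equation*}
Then I apply the generalized H\"older inequality \eqref{HoldOrl}, treating $|f|^2$ as one function, with the Young function $A(t)=e^{t}-1$ and its complementary function $A^\ast$. From the definition of the Luxemburg norm, $\||f|^2\|_{L^{A}(\mathbb{D})}=\|f\|_{L^M(\mathbb{D})}^2$, and $A^\ast$ is equivalent to $\Phi(u)=u\log(u+e)$, so $\int_{\mathbb{D}}|f|^2w\le c\,\|f\|_{L^M(\mathbb{D})}^2\,\|w\|_{L^\Phi(\mathbb{D})}$. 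When $\rho\equiv1$ one has $w=J_\varphi$ and no further transformation is needed: this already yields \eqref{OrlEstUniform}, with $\sqrt2$ being exactly the H\"older/complementarity constant for the pair $(A,A^\ast)$.

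The heart of the proof is to replace $\|w\|_{L^\Phi(\mathbb{D})}$ by $K_\Phi(\Omega,\rho,\varphi)$, and this is precisely what forces the shape of $K_\Phi$, in particular the factor $\Phi^{-1}(1/J_{\varphi^{-1}})$. Changing variables back to $\Omega$ gives $\int_{\mathbb{D}}\Phi(w/\lambda)\,dy=\int_\Omega\Phi\big(\rho/(\lambda J_{\varphi^{-1}})\big)J_{\varphi^{-1}}\,dx$. Now I use the global $\Delta'$-condition for $\Phi$ pointwise,
\begin{equation*}
\Phi\Big(\tfrac{\rho}{\lambda J_{\varphi^{-1}}}\Big)=\Phi\Big(\tfrac{\rho}{\lambda J_{\varphi^{-1}}\Phi^{-1}(1/J_{\varphi^{-1}})}\cdot\Phi^{-1}(1/J_{\varphi^{-1}})\Big)\le C_{\Delta'}\,\Phi\Big(\tfrac{\rho}{\lambda J_{\varphi^{-1}}\Phi^{-1}(1/J_{\varphi^{-1}})}\Big)\cdot\tfrac{1}{J_{\varphi^{-1}}};
\end{equation*}
multiplying by $J_{\varphi^{-1}}$, integrating, and taking $\lambda=K_\Phi(\Omega,\rho,\varphi)$ gives $\int_\Omega\Phi(\rho/(\lambda J_{\varphi^{-1}}))J_{\varphi^{-1}}\,dx\le C_{\Delta'}$ by the Luxemburg property \eqref{NormProp}; rescaling $\lambda\mapsto C_{\Delta'}\lambda$ and using $\Phi(t/C)\le\Phi(t)/C$ for $C\ge1$ (convexity) turns this into $\le 1$, i.e. $\|w\|_{L^\Phi(\mathbb{D})}\le C_{\Delta'}\,K_\Phi(\Omega,\rho,\varphi)$. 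Combining with the H\"older step and tracking the numerical constants ($C_{\Delta'}$ for $\Phi(u)=u\log(u+e)$, the H\"older constant, and the equivalence $A^\ast\sim\Phi$) yields $\|(\varphi^{-1})^\ast\|^2\le 18\,K_\Phi(\Omega,\rho,\varphi)$, hence $B^\rho_{2,2}(\Omega)\le 3\sqrt2\,B_{M,2}(\mathbb{D})\sqrt{K_\Phi(\Omega,\rho,\varphi)}$. I expect verifying the global $\Delta'$-inequality for $\Phi$ with an explicit constant, and the bookkeeping producing exactly $18$, to be the main technical obstacle.

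For the second part I run the same argument with $M_\varepsilon\prec M$ in place of $M$ (and the correspondingly defined complementary-type function $\Phi_\varepsilon$, so that $\Phi\prec\Phi_\varepsilon$); this gives both $B^\rho_{2,2}(\Omega)\le 3\sqrt2\,B_{M_\varepsilon,2}(\mathbb{D})\sqrt{K_{\Phi_\varepsilon}(\Omega,\rho,\varphi)}$ and boundedness of $(\varphi^{-1})^\ast\colon L^{M_\varepsilon}(\mathbb{D})\to L^2(\Omega,\rho)$. When $\Omega$ is $\alpha$-regular with $\alpha\in(2,\infty]$, Corollary~\ref{CompW} gives boundedness of $\varphi^\ast\colon W^{1,2}(\Omega)\to W^{1,2}(\mathbb{D})$, and by \eqref{SobOrlComp} the embedding $W^{1,2}(\mathbb{D})\hookrightarrow L^{M_\varepsilon}(\mathbb{D})$ is compact; since the embedding $W^{1,2}(\Omega)\hookrightarrow L^2(\Omega,\rho)$ factors through the diagram as $\varphi^\ast$, then this compact embedding, then $(\varphi^{-1})^\ast$, it is itself compact. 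Hence the spectrum is discrete, \eqref{MinMax} applies, and
\begin{equation*}
\mu_\rho(\Omega)=\frac{1}{(B^\rho_{2,2}(\Omega))^2}\ge\frac{1}{18\,(B_{M_\varepsilon,2}(\mathbb{D}))^2\,K_{\Phi_\varepsilon}(\Omega,\rho,\varphi)}.
\end{equation*}
Finally, substituting $\rho(x)=J_{\varphi^{-1}}(x)\,\Phi_\varepsilon^{-1}(1/J_{\varphi^{-1}}(x))$ makes the argument of the Orlicz norm defining $K_{\Phi_\varepsilon}$ identically $1$, so $K_{\Phi_\varepsilon}(\Omega,\rho,\varphi)=\|\mathbf{1}_\Omega\|_{L^{\Phi_\varepsilon}(\Omega)}\le 1$, which gives the last inequality.
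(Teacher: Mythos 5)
Your argument is essentially sound and, in the key step, takes a genuinely different and in fact cleaner route than the paper. The paper bounds $\|(\rho\circ\varphi)J_\varphi\|_{L^\Phi(\mathbb{D})}$ through the Orlicz norm $L^{(\Phi)}$: it writes the norm as a supremum over dual functions $g$ with $\int\Phi^\ast(g)\le 1$, splits the integrand with the Young inequality $uv\le\Phi(u)+\Phi^\ast(v)$, applies the global $\Delta'$-condition, changes variables, and so arrives at the constant $3=2\cdot1+1$. You instead work directly with the defining integral of the Luxemburg norm: change variables back to $\Omega$, insert the factor $\Phi^{-1}(1/J_{\varphi^{-1}})$, use $\Phi(ab)\le C_{\Delta'}\Phi(a)\Phi(b)$ pointwise with $\Phi(\Phi^{-1}(1/J_{\varphi^{-1}}))=1/J_{\varphi^{-1}}$ cancelling the Jacobian, then the Luxemburg property \eqref{NormProp} and convexity rescaling. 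This avoids the duality detour and Young's inequality entirely. Carrying out your own bookkeeping with $C_{\Delta'}=2$ actually yields $\|w\|_{L^\Phi(\mathbb{D})}\le 2\,K_\Phi$ and hence $\|(\varphi^{-1})^\ast\|^2\le 4\,K_\Phi$, i.e. $B^\rho_{2,2}(\Omega)\le 2\,B_{M,2}(\mathbb{D})\sqrt{K_\Phi}$ — a sharper constant than the stated $3\sqrt{2}$ (and than the paper's derived $2\sqrt{3}$). Your assertion that tracking the constants ``yields $18$'' is therefore not what your method gives; you should either carry out the arithmetic (obtaining the better $4$) or note that your bound implies the weaker stated one.

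One genuine slip at the end: you claim $K_{\Phi_\varepsilon}=\|\mathbf{1}_\Omega\|_{L^{\Phi_\varepsilon}(\Omega)}\le 1$. This is not true in general; the Luxemburg norm of the constant $1$ on $\Omega$ equals $1/\Phi_\varepsilon^{-1}(1/|\Omega|)$, which exceeds $1$ whenever $\Phi_\varepsilon(1)>1/|\Omega|$, i.e. for $|\Omega|$ large enough. (The paper's own proof glosses over this point and simply substitutes, so the imprecision is shared, but you should not present the inequality $\le 1$ as if it followed from the definition.) The rest — the identity $\||f|^2\|_{L^{\widetilde\Phi^\ast}(\mathbb{D})}=\|f\|_{L^M(\mathbb{D})}^2$, the monotonicity $\|w\|_{L^{\widetilde\Phi}}\le\|w\|_{L^\Phi}$ coming from $\widetilde\Phi\le\Phi$ pointwise, the compactness argument via Corollary~\ref{CompW} and \eqref{SobOrlComp}, and the passage to $\mu_\rho$ via \eqref{MinMax} — matches the paper and is correct.
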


\begin{proof}

    The anticommutative diagram in this case takes the following form:

    \begin{equation*}
        \begin{CD}
            L^{1,2}(\Omega) @>{\varphi^\ast}>> L^{1,2}(\mathbb{D}) \\
            @VVV @VVV \\
            L^2(\Omega, \rho) @<{(\varphi^{-1})^\ast}<< L^M(\mathbb{D})
        \end{CD}
    \end{equation*}

    Here we again first prove the estimate for the constant in the Poincar\'e inequality \eqref{Poincare_22} and then, under the additional assumptions, we provide the estimate for $\mu_\rho(\Omega)$.

    According to the diagram, we have the following chain of inequalities:
    \begin{align*}
        \inf\limits_{c \in \mathbb{R}}\|f-c\|_{L^2(\Omega, \rho)} &\leq \|(\varphi^{-1})^\ast\| \cdot \inf\limits_{c \in \mathbb{R}}\|f-c\|_{L^M(\mathbb{D})} \leq \|(\varphi^{-1})^\ast\| \cdot \|f-\med(f)\|_{L^M(\mathbb{D})}  \\
        &\leq B_{M,2}(\mathbb{D}) \cdot \|(\varphi^{-1})^\ast\| \|f\|_{L^{1,2}(\mathbb{D})} \leq \|\varphi^\ast\| \cdot B_{M,2}(\mathbb{D}) \cdot \|(\varphi^{-1})^\ast\| \|f\|_{L^{1,2}(\Omega)}.
    \end{align*}
    
    This gives us an estimate for $B^\rho_{2,2}(\Omega)$:
    $$
        B^\rho_{2,2}(\Omega) \leq \|\varphi^\ast\| \cdot B_{M,2}(\mathbb{D}) \cdot \|(\varphi^{-1})^\ast\|.
    $$

    In this case, $\varphi^\ast: L^{1,2}(\Omega) \to L^{1,2}(\mathbb{D})$ is an isometry and hence $\|\varphi^\ast\|=1$. Let us estimate the norm $(\varphi^{-1})^\ast: L^M(\mathbb{D}) \to L^2(\Omega, \rho)$. Using the change of variable formula \eqref{CangeDir} and the H\"older inequality \eqref{HoldOrl} for the conjugate pair $\widetilde\Phi(u) = (1 + u)\log(1+u) - u$ and $\widetilde\Phi^\ast(u) = e^u - 1$, we obtain
    \begin{align*}
        \|f \circ \varphi^{-1}\|_{L^2(\Omega, \rho)} &= \left( \int_{\Omega} |f(\varphi^{-1}(x))|^2 \rho(x) \, dx \right)^{\frac{1}{2}} = \left( \int_{\mathbb{D}} |f(y)|^2 \rho(\varphi(y))J_{\varphi}(y) \, dy \right)^{\frac{1}{2}} \\
        &\leq \sqrt{2} \|f^2\|^{\frac{1}{2}}_{L^{\widetilde\Phi^\ast}(\mathbb{D})} \| J_{\varphi} \cdot \rho \circ \varphi \|^{\frac{1}{2}}_{L^{\widetilde\Phi}(\mathbb{D})}.
    \end{align*}

    By the definition of the Luxembourg norm, taking $\lambda  = \widetilde\lambda^2$,
    \begin{align*}
        \|f^2\|_{L^{\widetilde\Phi^\ast}(\mathbb{D})} &= \inf\Big\{\lambda: \int_{\mathbb{D}} \exp\Big(\frac{|f(y)|^2}{\lambda}\Big) - 1 \, dy \leq 1 \Big\} \\
        &= \Big(\inf\Big\{{\widetilde\lambda}: \int_{\mathbb{D}} \exp\Big(\frac{|f(y)|^2}{{\widetilde\lambda}^2}\Big) - 1 \, dy \leq 1 \Big\}\Big)^2 = \|f\|^2_{L^M(\mathbb{D})}.
    \end{align*}

    The Young function $\widetilde\Phi(u) = (1 + u)\log(1+u) - u$ satisfies the $\Delta'$-condition, but not for all $u>0$ (see \cite{KR}). We can change this function to the equivalent one that satisfies the $\Delta'$-condition for all $u>0$, namely, to the function $\Phi(u) = u\log(u+e)$. Indeed,
    \begin{multline*}
        \Phi(uv) = uv\log(uv+e) \leq uv\log((u+e)(v+e)) \\
        \leq 2u\log(u+e)v\log(v+e)=2\Phi(u)\Phi(v),
    \end{multline*}
    which means that $\Phi$ satisfies $\Delta'$-condition for all $u \geq 0$ with $C_{\Delta'}=2$, and
    $$
        \lim\limits_{u \to \infty} \frac{(1 + u)\log(1+u) - u}{u\log(u+e)} = 1,
    $$
    that implies $\Phi \sim \widetilde\Phi$.
    
    By \cite{KR}, the space $L^{\widetilde\Phi}(\mathbb{D})$ and $L^{\Phi}(\mathbb{D})$ coincide as sets. Moreover, $\widetilde\Phi(u) \leq \Phi(u)$ for all $u$. Hence for all functions $f$, $\|f\|_{L^{\widetilde\Phi}(\mathbb{D})} \leq \|f\|_{L^{\Phi}(\mathbb{D})}$. Therefore, we can conclude that
    \begin{multline*}
        \|f \circ \varphi^{-1}\|_{L^2(\Omega, \rho)} \leq \sqrt{2} \|f^2\|^{\frac{1}{2}}_{L^{\widetilde\Phi^\ast}(\mathbb{D})} \| J_{\varphi} \cdot \rho \circ \varphi \|^{\frac{1}{2}}_{L^{\widetilde\Phi}(\mathbb{D})} \\
        = \sqrt{2}\|f\|_{L^M(\mathbb{D})} \| J_{\varphi} \cdot \rho \circ \varphi \|^{\frac{1}{2}}_{L^{\widetilde\Phi}(\mathbb{D})} \leq \sqrt{2}\|f\|_{L^M(\mathbb{D})} \| J_{\varphi} \cdot \rho \circ \varphi \|^{\frac{1}{2}}_{L^{\Phi}(\mathbb{D})}.
    \end{multline*}

    The above inequality implies
    $$
        \|(\varphi^{-1})^\ast\| \leq \sqrt{2}\| J_{\varphi} \cdot \rho \circ \varphi \|^{\frac{1}{2}}_{L^{\Phi}(\mathbb{D})},
    $$
    and hence, the following estimate for the constant $B^\rho_{2,2}(\Omega)$ holds:
    $$
        B^{\rho}_{2,2}(\Omega) \leq \sqrt{2} B_{M,2}(\mathbb{D}) \| J_{\varphi} \cdot \rho \circ \varphi \|^{\frac{1}{2}}_{L^{\Phi}(\mathbb{D})}.
    $$
    In the case $\rho \equiv 1$, we obtain \eqref{OrlEstUniform}:
    $$
        B_{2,2}(\Omega) \leq \sqrt{2} B_{M,2}(\mathbb{D}) \| J_{\varphi} \|^{\frac{1}{2}}_{L^{\Phi}(\mathbb{D})}.
    $$

    To prove inequality \eqref{OrlEstDens}, we need to change the variable in the term $\| J_{\varphi} \cdot \rho \circ \varphi \|_{L^{\Phi}(\mathbb{D})}$. For this purpose we changed $\widetilde\Phi$ on $\Phi$ to use the global $\Delta'$-condition. Let us consider a functional
    $$
        \int_{\mathbb{D}} \left|\frac{\rho(\varphi(y))}{\lambda}\frac{J_\varphi(y)}{\Phi^{-1}(J_\varphi(y))} \Phi^{-1}(J_\varphi(y)) g(y)\right| \, dy,
    $$
    where $\lambda$ is a positive constant and $g$ is an arbitrary function from the space $L^{\Phi^{\ast}}(\mathbb{D})$ with the property $\int_{\mathbb{D}} \Phi^\ast(g(y)) \, dy \leq 1$.

    By the Young inequality, global $\Delta'$-condition for $\Phi$ and the change of variable formula,
    \begin{align*}
        \int_{\mathbb{D}} \Big| \frac{\rho(\varphi(y))}{\lambda} &\frac{J_\varphi(y)}{\Phi^{-1}(J_\varphi(y))} \Phi^{-1}(J_\varphi(y)) g(y) \Big| \, dy \\
        &\leq \int_{\mathbb{D}} \Phi\left( \frac{\rho(\varphi(y))}{\lambda}\frac{J_\varphi(y)}{\Phi^{-1}(J_\varphi(y))} \Phi^{-1}(J_\varphi(y)) \right) \, dy + \int_{\mathbb{D}} \Phi^\ast(g(y)) \, dy \\
        &\leq 2 \int_{\mathbb{D}} \Phi\left( \frac{\rho(\varphi(y))}{\lambda}\frac{J_\varphi(y)}{\Phi^{-1}(J_\varphi(y))} \right) J_\varphi(y) \, dy + \int_{\mathbb{D}} \Phi^\ast(g(y)) \, dy \\
        &= 2 \int_{\Omega} \Phi\left( \frac{\rho(x)}{\lambda}\frac{1}{J_{\varphi^{-1}}(x)\Phi^{-1}(\frac{1}{J_{\varphi^{-1}}(x)})} \right) \, dx + \int_{\mathbb{D}} \Phi^\ast(g(y)) \, dy
    \end{align*}

    Taking $\lambda=\Big\|\frac{\rho}{J_{\varphi^{-1}}\Phi^{-1}(\frac{1}{J_{\varphi^{-1}}})}\Big\|_{L^\Phi(\Omega)}$, we infer
    \begin{align*}
        \| J_{\varphi} \cdot &\rho \circ \varphi \|_{L^{\Phi}(\mathbb{D})} \leq \lambda \Big\| \frac{J_{\varphi} \cdot \rho \circ \varphi}{\lambda}  \Big\|_{L^{(\Phi)}(\mathbb{D})} \\
        & = \lambda \sup\left\{\int_{\mathbb{D}} \left|\frac{\rho(\varphi(y))}{\lambda}\frac{J_\varphi(y)}{\Phi^{-1}(J_\varphi(y))} \Phi^{-1}(J_\varphi(y)) g(y)\right| \, dy \right\} \\
        & \leq \lambda \sup\left\{2 \int_{\Omega} \Phi\left( \frac{\rho(x)}{\lambda}\frac{1}{J_{\varphi^{-1}}(x)\Phi^{-1}(\frac{1}{J_{\varphi^{-1}}(x)})} \right) \, dx + \int_{\mathbb{D}} \Phi^\ast(g(y)) \, dy  \right\} \\
        & \leq 3\Big\|\frac{\rho}{J_{\varphi^{-1}}\Phi^{-1}(\frac{1}{J_{\varphi^{-1}}})}\Big\|_{L^\Phi(\Omega)},
    \end{align*}
    where the last inequality holds due to the property \eqref{NormProp} of the Luxembourg norm and due to considering functions $g$ from the unit ball of the space $L^{\Phi^{\ast}}(\mathbb{D})$.

    Finally, we obtain the estimate
    $$
        B^\rho_{2,2}(\Omega) \leq 2\sqrt{3} B_{M,2}(\mathbb{D}) \Big\|\frac{\rho}{J_{\varphi^{-1}}\Phi^{-1}(\frac{1}{J_{\varphi^{-1}}})}\Big\|^{\frac{1}{2}}_{L^\Phi(\Omega)}.
    $$

    Now we want to conclude, that under the assumptions of the theorem, the embedding $W^{1,2} \hookrightarrow L^{2}(\Omega, \rho)$ is compact. The $\alpha$-regularity of $\Omega$ gives us the boundedness of the operator
    $$
        \varphi^\ast: W^{1,2}(\Omega) \to W^{1,2}(\mathbb{D}).
    $$
    Taking $M_\varepsilon \prec M$, we get the compactness of the embedding $W^{1,2}(\mathbb{D}) \hookrightarrow L^{M_\varepsilon}(\mathbb{D})$. The simplest example of such function is a function $M_\varepsilon(u) = e^{u^{\frac{2}{\varepsilon}}}-1$, $\varepsilon > 1$. It is easy to check (see, e.g. Section 6.3 in \cite{KR}), that in this case the function $\Phi_\varepsilon \sim u\log^\varepsilon u$ as dual to the $\widetilde{\Phi}^\ast_\varepsilon = e^{u^{\frac{1}{\varepsilon}}}-1$.
    
    Hence, by the equality \eqref{MinMax}, we obtain the estimate \eqref{OrlEstDens}:
    $$
        \mu_{\rho}(\Omega) \geq \frac{1}{18 (B_{M_\varepsilon,2}(\mathbb{D}))^2 \Big\|\frac{\rho}{J_{\varphi^{-1}}\Phi_\varepsilon^{-1}(\frac{1}{J_{\varphi^{-1}}})}\Big\|_{L^\Phi_\varepsilon(\Omega)}}.
    $$

    In the case $\rho(x) = J_{\varphi^{-1}}(x)\Phi_\varepsilon^{-1}(\frac{1}{J_{\varphi^{-1}}(x)})$ a.e. on $\Omega$,
    $$
        \mu_{\rho}(\Omega) \geq \frac{1}{18 (B_{M_\varepsilon,2}(\mathbb{D}))^2}.
    $$    
\end{proof}

The following theorem is a corollary of Theorem \ref{OrlEstThm} and is a refined version of Corollary \ref{indest}. It is sharp in the sense of optimality of the space $L^M(\mathbb{D})$ as a target space for the Sobolev embedding.

\begin{thm}\label{OrlEstDens}
    Let $\Omega \subset \mathbb{R}^2$ be a $K$-quasidisc and $2 < \alpha < \frac{2K^2}{K^2-1}$, and $\rho$ be a density. Let also 
    $$
        \Phi(u) = u \log(u+e), \quad \Psi(u) = \frac{2}{\alpha} \left(\Phi^{-1}(u)(e^{\Phi^{-1}(u)}-e)\right)^{\frac{\alpha-2}{2}}.
    $$
    Then
    \begin{equation}\label{rhoestOrlicz}
        B^{\rho}_{2,2} \leq B_{M,2}(\mathbb{D})\Big(\frac{\widetilde{C}_J(\alpha, K, |\Omega|)}{\Phi^{-1}(1/\|\Phi(\rho)\|_{L^{\Psi^\ast}(\Omega)})}\Big)^{\frac{1}{2}},
    \end{equation}
    where 
    $$
        \widetilde{C}_J(\alpha, K, |\Omega|) = \frac{288 C_\Psi}{\Phi^{-1}\left(\frac{1}{\Psi((\frac{\alpha}{\alpha-2})^{\frac{\alpha-2}{2}} C^{\frac{\alpha}{2}}_J(\alpha, K, |\Omega|))}\right)},
    $$
    the constant $C_{J}(\alpha, K, |\Omega|)$ as in \eqref{Jest}, and $C_\Psi$ is a constant from the $\nabla'$-condition for $\Psi$.
\end{thm}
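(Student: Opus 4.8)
The plan is to bound the quantity $K_\Phi(\Omega,\rho,\varphi)$ from Theorem~\ref{OrlEstThm} by an expression in which the conformal map $\varphi$ enters only through the moment $\int_{\mathbb D}J_\varphi^{\alpha/2}$, which Theorem~\ref{Jacobianest} controls by $C_J(\alpha,K,|\Omega|)^{\alpha/2}$ precisely in the range $2<\alpha<\tfrac{2K^2}{K^2-1}$. Writing $h:=1/J_{\varphi^{-1}}=J_\varphi\circ\varphi^{-1}$, the definition of $K_\Phi$ rearranges to
$$K_\Phi(\Omega,\rho,\varphi)=\Big\|\frac{\rho\,h}{\Phi^{-1}(h)}\Big\|_{L^\Phi(\Omega)}=\inf\Big\{\lambda>0:\ \int_\Omega\Phi\Big(\frac{\rho\,h}{\lambda\,\Phi^{-1}(h)}\Big)\,dx\le1\Big\},$$
and since Theorem~\ref{OrlEstThm} gives $B^\rho_{2,2}(\Omega)\le 3\sqrt2\,B_{M,2}(\mathbb D)\sqrt{K_\Phi(\Omega,\rho,\varphi)}$, it suffices to estimate this infimum.

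To decouple the density from the Jacobian factor I would use the global $\Delta'$-condition for $\Phi(u)=u\log(u+e)$ (with constant $2$) to write $\Phi\big(\tfrac1\lambda\rho\cdot\tfrac{h}{\Phi^{-1}(h)}\big)\le 2\,\Phi(\rho/\lambda)\,\Phi\big(\tfrac{h}{\Phi^{-1}(h)}\big)$, followed by the Orlicz H\"older inequality (Proposition~\ref{HoldOrlprop}) for the conjugate pair $(\Psi^*,\Psi)$:
$$\int_\Omega\Phi\Big(\frac{\rho\,h}{\lambda\,\Phi^{-1}(h)}\Big)\,dx\ \le\ 4\,\big\|\Phi(\rho/\lambda)\big\|_{L^{\Psi^*}(\Omega)}\,\Big\|\Phi\Big(\frac{h}{\Phi^{-1}(h)}\Big)\Big\|_{L^\Psi(\Omega)}.$$
The function $\Psi$ is tailor-made so that the second factor becomes computable: since $\Phi(\Phi^{-1}(u))=u$, a direct substitution shows $\Psi\big(\Phi\big(u/\Phi^{-1}(u)\big)\big)=\tfrac2\alpha\,u^{(\alpha-2)/2}$ for every $u>0$. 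Hence, by the change of variables \eqref{CangeDir} applied with exponent $(\alpha-2)/2$ (which turns $\int_\Omega(J_\varphi\circ\varphi^{-1})^{(\alpha-2)/2}\,dx$ into $\int_{\mathbb D}J_\varphi^{\alpha/2}\,dy$),
$$\int_\Omega\Psi\Big(\Phi\Big(\frac{h}{\Phi^{-1}(h)}\Big)\Big)\,dx=\frac2\alpha\int_{\mathbb D}J_\varphi^{\alpha/2}\,dy\ \le\ \frac2\alpha\,C_J(\alpha,K,|\Omega|)^{\alpha/2}.$$

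Converting this integral bound into a bound for $\big\|\Phi(h/\Phi^{-1}(h))\big\|_{L^\Psi(\Omega)}$ by rescaling the argument and invoking the $\nabla'$-condition for $\Psi$ (with constant $C_\Psi$), and estimating $\|\Phi(\rho/\lambda)\|_{L^{\Psi^*}(\Omega)}\le 2\,\Phi(1/\lambda)\,\|\Phi(\rho)\|_{L^{\Psi^*}(\Omega)}$ by $\Delta'$ again, the requirement that the displayed integral be $\le1$ reduces to an inequality of the shape $\Phi(1/\lambda)\le\big[\,c\,C_\Psi\,(\text{moment term})\,\|\Phi(\rho)\|_{L^{\Psi^*}(\Omega)}\,\big]^{-1}$. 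Solving for $\lambda$ (applying $\Phi^{-1}$) and then splitting the argument of $\Phi^{-1}$ with the super-multiplicativity $\Phi^{-1}(2ab)\ge\Phi^{-1}(a)\Phi^{-1}(b)$ that also comes from $\Delta'$, one arrives at
$$K_\Phi(\Omega,\rho,\varphi)\ \le\ \frac{C_\Psi}{\Phi^{-1}\big(1/\Psi\big((\tfrac{\alpha}{\alpha-2})^{(\alpha-2)/2}C_J^{\alpha/2}\big)\big)}\cdot\frac{1}{\Phi^{-1}\big(1/\|\Phi(\rho)\|_{L^{\Psi^*}(\Omega)}\big)},$$
where the factor $\tfrac2\alpha C_J^{\alpha/2}$ has been repackaged into $\Psi\big((\tfrac{\alpha}{\alpha-2})^{(\alpha-2)/2}C_J^{\alpha/2}\big)$ using the explicit shape of $\Psi$. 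Multiplying by $(3\sqrt2)^2=18$ and taking square roots produces $\widetilde C_J=288\,C_\Psi/\Phi^{-1}\big(1/\Psi(\cdots)\big)$ and hence \eqref{rhoestOrlicz}.

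The only genuine obstacle is the constant bookkeeping: every $\Delta'$, $\nabla'$ and H\"older step costs a multiplicative factor, and because $\Phi^{-1}$ is not multiplicative one has to interleave the splitting of the arguments of $\Phi^{-1}$ with the successive rescalings in exactly the right order to land on the stated $\widetilde C_J$ (this is also where the somewhat baroque form of $\widetilde C_J$ originates). Conceptually nothing is needed beyond Theorems~\ref{OrlEstThm} and~\ref{Jacobianest} together with the tailor-made identity $\Psi(\Phi(u/\Phi^{-1}(u)))=\tfrac2\alpha u^{(\alpha-2)/2}$; one should, in addition, verify that $\Psi$ as defined is a bona fide Young function lying in $\nabla'$ (so that $C_\Psi$ and the conjugate $\Psi^*$ make sense) and that the right‑hand side of \eqref{rhoestOrlicz} is understood to be $+\infty$ unless $\Phi(\rho)\in L^{\Psi^*}(\Omega)$.
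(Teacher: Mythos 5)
Your proposal is correct and follows essentially the same route as the paper: start from Theorem \ref{OrlEstThm}, decouple $\rho$ from the Jacobian factor with the $\Delta'$-condition for $\Phi$, apply the Orlicz H\"older inequality to the tailor-made conjugate pair $(\Psi,\Psi^\ast)$, exploit the identity $\Psi\bigl(\Phi(u/\Phi^{-1}(u))\bigr)=\tfrac{2}{\alpha}u^{(\alpha-2)/2}$ together with the change of variables to reduce to $\int_{\mathbb D}J_\varphi^{\alpha/2}$, and finish with the $\nabla'$-condition for $\Psi$ and Theorem \ref{Jacobianest}. The only organizational difference is bookkeeping: the paper works with the Orlicz (sup) norm and inserts the Young inequality with the factorization $\lambda=\lambda_1\lambda_2$ at the outset, which lets it choose $\lambda_1,\lambda_2$ independently and avoid the supermultiplicativity $\Phi^{-1}(2ab)\geq\Phi^{-1}(a)\Phi^{-1}(b)$ that your Luxemburg-norm route invokes, so the two variants differ only in where the $\Delta'$-constant appears.
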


\begin{proof}
    Similarly to the proof of Corollary \ref{indest}, the main ingredient here is the H\"older inequality \eqref{HoldOrl}, but the technical part is more involved. Let us consider the term
    $$
        \Big\|\frac{\rho}{J_{\varphi^{-1}}\Phi^{-1}(\frac{1}{J_{\varphi^{-1}}})}\Big\|_{L^\Phi(\Omega)}
    $$
    from \eqref{OrlEstDens}. Since $\varphi: \mathbb{D} \to \Omega$ is a conformal mapping, we can rewrite it in the following form
    $$
        \Big\|\rho \cdot \frac{J_{\varphi} \circ \varphi^{-1}}{\Phi^{-1}(J_{\varphi} \circ \varphi^{-1})}\Big\|_{L^\Phi(\Omega)}.
    $$

    Let us consider the corresponding functional
    \begin{equation}\label{FuncDens}
        \int_{\Omega} \left|\frac{\rho(x)}{\lambda}\frac{J_\varphi(\varphi^{-1}(x))}{\Phi^{-1}(J_\varphi(\varphi^{-1}(x)))} g(x)\right| \, dx,
    \end{equation}
    where $\lambda$ is a positive constant and $g$ is an arbitrary function from the space $L^{\Phi^{\ast}}(\Omega)$ with the property $\int_{\Omega} \Phi^\ast(g(x)) \, dx \leq 1$.

    Using the Young inequality \eqref{YoungIneq} and the $\Delta'$-condition for function $\Phi$, for $\lambda = \lambda_1 \lambda_2 > 0$, we come to the following estimate
    \begin{multline*}
        \int_{\Omega} \left|\frac{\rho(x)}{\lambda}\frac{J_\varphi(\varphi^{-1}(x))}{\Phi^{-1}(J_\varphi(\varphi^{-1}(x)))} g(x)\right| \, dx \\
        \leq 2\int_{\Omega} \Phi\left(\frac{\rho(x)}{\lambda_1}\right) \Phi\left( \frac{J_\varphi(\varphi^{-1}(x))}{\lambda_2\Phi^{-1}(J_\varphi(\varphi^{-1}(x)))} \right) \, dx + \int_{\Omega} \Phi^\ast(g(x)) \, dx \\
        \leq 8 \Phi(\lambda_1^{-1})\Phi(\lambda_2^{-1}) \int_{\Omega} \Phi(\rho(x)) \Phi\left( \frac{J_\varphi(\varphi^{-1}(x))}{\Phi^{-1}(J_\varphi(\varphi^{-1}(x)))} \right) \, dx + 1.
    \end{multline*}

    The second term in the above inequality is bounded by $1$. For the first term we want to apply the H\"older inequality \eqref{HoldOrl} in such a way that gives us a norm of $J_\varphi$ in the space $L^{\frac{\alpha}{2}}(\mathbb{D})$. The first step is to determine a Young function $\Psi$ with the property
    $$
        \Psi \left(\Phi\left(\frac{u}{\Phi^{-1}(u)}\right)\right) = \frac{2 u^{\frac{\alpha-2}{2}}}{\alpha}.
    $$

    Changing the variable and using $\Phi(u) = u\log(u+e)$,
    $$
        \Phi\left(\frac{u}{\Phi^{-1}(u)}\right) = t \quad \Rightarrow \quad u = \Phi^{-1}(t)(e^{\Phi^{-1}(t)}-e),
    $$
    we obtain
    $$
        \Psi(t) = \frac{2}{\alpha} \left(\Phi^{-1}(t)(e^{\Phi^{-1}(t)}-e)\right)^{\frac{\alpha-2}{2}}.
    $$

    It's not hard to check that $\Psi: [0,\infty) \to [0, \infty]$ is a Young function. It is continuous as a composition of continuous functions, convexity follows because of the term $e^t-e$, and the limits at $0$ and $\infty $ are $0$ and $\infty$. Moreover, this function satisfy the $\nabla'$-condition
    $$
        \Psi(u)\Psi(v) \leq \Psi(C_{\Psi} uv) \quad \text{ for all } u,v \geq 0.
    $$
    This follows from the $\Delta'$-condition for $\Phi$ (and hence, the reverse inequality for the inverse function), and from \cite[Theorem 6.7]{KR}, as the conjugate function for $e^u-e$ satisfy the $\Delta'$-condition.

    Therefore, we can use the H\"older inequality \eqref{HoldOrl} with functions $\Psi$ and $\Psi^\ast$. It gives us
    $$
        \int_{\Omega} \Phi(\rho(x)) \Phi\Big( \frac{J_\varphi(\varphi^{-1}(x))}{\Phi^{-1}(J_\varphi(\varphi^{-1}(x)))} \Big) \, dx \leq 2\|\Phi(\rho)\|_{L^{\Psi^\ast}(\Omega)} \Big\| \Phi\Big( \frac{J_{\varphi} \circ \varphi^{-1}}{\Phi^{-1}(J_{\varphi}\circ \varphi^{-1})} \Big) \Big\|_{L^{\Psi}(\Omega)}.
    $$

    It remains to show that we can estimate last term by $\|J_\varphi\|_{L^{\frac{\alpha}{2}}(\mathbb{D})}$. For some $\delta > 0$, by the $\nabla'$-condition for $\Psi$ and the change of variable formula,
    \begin{multline*}
        \frac{\Psi(C^{-1}_\Psi\delta)}{\Psi(C^{-1}_\Psi\delta)} \int_{\Omega} \Psi\left(\frac{\Phi\left( \frac{J_{\varphi}(\varphi^{-1}(x))}{\Phi^{-1}(J_{\varphi}(\varphi^{-1}(x)))} \right)}{\delta}\right) \, dx  \\
        \leq \frac{1}{\Psi(C^{-1}_\Psi\delta)} \int_{\Omega} \Psi\left(\Phi\left( \frac{J_{\varphi}(\varphi^{-1}(x))}{\Phi^{-1}(J_{\varphi}(\varphi^{-1}(x)))} \right)\right) \, dx \\
        = \frac{1}{\Psi(C^{-1}_\Psi\delta)}\int_{\Omega} \frac{2}{\alpha} (J_{\varphi}(\varphi^{-1}(x)))^{\frac{\alpha-2}{2}} \, dx = \int_{\mathbb{D}} \frac{2}{\alpha} \left(\frac{J_{\varphi}(y)}{(\Psi(C^{-1}_\Psi\delta))^{\frac{2}{\alpha}}}\right)^{\frac{\alpha}{2}} \, dy.
    \end{multline*}

    If $\delta = C_\Psi \Psi^{-1}((\frac{\alpha}{\alpha-2})^{\frac{\alpha-2}{2}}\|J_{\varphi}\|^{\frac{\alpha}{2}}_{L^{\frac{\alpha}{2}}(\mathbb{D})})$, then the last integral in the above expression equals to $1$. Hence,
    \begin{multline*}
        \left\| \Phi\left( \frac{J_{\varphi} \circ \varphi^{-1}}{\Phi^{-1}(J_{\varphi}\circ \varphi^{-1})} \right) \right\|_{L^{\Psi}(\Omega)} = \\
        \inf\Big\{ \delta: \int_{\Omega} \Psi\left(\frac{\Phi\left( \frac{J_{\varphi}(\varphi^{-1}(x))}{\Phi^{-1}(J_{\varphi}(\varphi^{-1}(x)))} \right)}{\delta}\right) \, dx \leq 1 \Big\} \\
        \leq  C_\Psi \Psi^{-1}((\frac{\alpha}{\alpha-2})^{\frac{\alpha-2}{2}}\|J_{\varphi}\|^{\frac{\alpha}{2}}_{L^{\frac{\alpha}{2}}(\mathbb{D})}).
    \end{multline*}

    Finally, returning to functional \eqref{FuncDens}, for $\lambda = \lambda_1 \lambda_2 > 0$ we conclude
    \begin{multline*}
        \int_{\Omega} \left|\frac{\rho(x)}{\lambda}\frac{J_\varphi(\varphi^{-1}(x))}{\Phi^{-1}(J_\varphi(\varphi^{-1}(x)))} g(x)\right| \, dx \\
        \leq 8 \Phi(\lambda_1^{-1})\Phi(\lambda_2^{-1}) \int_{\Omega} \Phi(\rho(x)) \Phi\left( \frac{J_\varphi(\varphi^{-1}(x))}{\Phi^{-1}(J_\varphi(\varphi^{-1}(x)))} \right) \, dx + 1 \\
        \leq 16 \Phi(\lambda_1^{-1})\Phi(\lambda_2^{-1}) \|\Phi(\rho)\|_{L^{\Psi^\ast}(\Omega)} \left\| \Phi\left( \frac{J_{\varphi} \circ \varphi^{-1}}{\Phi^{-1}(J_{\varphi}\circ \varphi^{-1})} \right) \right\|_{L^{\Psi}(\Omega)} \\
        \leq 16 \Phi(\lambda_1^{-1})\Phi(\lambda_2^{-1}) \|\Phi(\rho)\|_{L^{\Psi^\ast}(\Omega)} C_\Psi \Psi^{-1}((\frac{\alpha}{\alpha-2})^{\frac{\alpha-2}{2}}\|J_{\varphi}\|^{\frac{\alpha}{2}}_{L^{\frac{\alpha}{2}}(\mathbb{D})}). 
    \end{multline*}

    Taking 
    $$
        \lambda_1 = \frac{1}{\Phi^{-1}(1/\|\Phi(\rho)\|_{L^{\Psi^\ast}(\Omega)})}, \quad \lambda_2 = \frac{1}{\Phi^{-1}(1/\Psi((\frac{\alpha}{\alpha-2})^{\frac{\alpha-2}{2}}\|J_{\varphi}\|^{\frac{\alpha}{2}}_{L^{\frac{\alpha}{2}}(\mathbb{D})}))},
    $$
    we obtain
    \begin{multline*}
        \Big\|\frac{\rho}{J_{\varphi^{-1}}\Phi^{-1}(\frac{1}{J_{\varphi^{-1}}})}\Big\|_{L^\Phi(\Omega)} \leq \lambda  \Big\|\rho \cdot \frac{J_{\varphi} \circ \varphi^{-1}}{\lambda\Phi^{-1}(J_{\varphi} \circ \varphi^{-1})}\Big\|_{L^{(\Phi)}(\Omega)} \\
        = \lambda \sup\left\{\int_{\Omega} \left|\frac{\rho(x)}{\lambda}\frac{J_\varphi(\varphi^{-1}(x))}{\Phi^{-1}(J_\varphi(\varphi^{-1}(x)))} g(x)\right| \, dx : \int_{\Omega} \Phi^{\ast}(g(x))\, dx \leq 1 \right\} \\
        \leq 16 \lambda_1\lambda_2 \Phi(\lambda_1^{-1})\Phi(\lambda_2^{-1}) \|\Phi(\rho)\|_{L^{\Psi^\ast}(\Omega)} C_\Psi \Psi^{-1}((\frac{\alpha}{\alpha-2})^{\frac{\alpha-2}{2}}\|J_{\varphi}\|^{\frac{\alpha}{2}}_{L^{\frac{\alpha}{2}}(\mathbb{D})})\\
        \leq 16 C_\Psi \frac{1}{\Phi^{-1}(1/\|\Phi(\rho)\|_{L^{\Psi^\ast}(\Omega)})} \frac{1}{\Phi^{-1}(1/\Psi((\frac{\alpha}{\alpha-2})^{\frac{\alpha-2}{2}}\|J_{\varphi}\|^{\frac{\alpha}{2}}_{L^{\frac{\alpha}{2}}(\mathbb{D})}))}.
    \end{multline*}

    Therefore, for 
    $$
        \widetilde{C}_J(\alpha, K, |\Omega|) = \frac{288 C_\Psi}{\Phi^{-1}\left(\frac{1}{\Psi((\frac{\alpha}{\alpha-2})^{\frac{\alpha-2}{2}} C^{\frac{\alpha}{2}}_J(\alpha, K, |\Omega|))}\right)},
    $$
    where $C_J(\alpha, K, |\Omega|)$ is taken from \eqref{Jest},
    $$
        B^{\rho}_{2,2} \leq B_{M,2}(\mathbb{D})\Big(\frac{\widetilde{C}_J(\alpha, K, |\Omega|)}{\Phi^{-1}(1/\|\Phi(\rho)\|_{L^{\Psi^\ast}(\Omega)})}\Big)^{\frac{1}{2}}.
    $$
\end{proof}

Considering Young functions  
    $$
        \Phi_\varepsilon(u) = u \log^{\varepsilon}(u+e), \quad \Psi_{\varepsilon}(u) = \frac{2}{\alpha} \left(\Phi_\varepsilon^{-1}(u)(e^{(\Phi_\varepsilon^{-1}(u))^{\frac{1}{\varepsilon}}}-e)\right)^{\frac{\alpha-2}{2}}.
    $$
we come to the compact case and it makes possible to use the Min-Max Principle and equality \eqref{MinMax}. This gives us the estimate 
\begin{equation}\label{rhoestOrliczMinMax}
        \mu_{\rho}(\Omega) \geq \frac{1}{\widetilde{C}_J(\alpha, K, |\Omega|) (B_{M_\varepsilon,2}(\mathbb{D}))^2 \frac{1}{\Phi_\varepsilon^{-1}(1/\|\Phi_\varepsilon(\rho)\|_{L^{\Psi_\varepsilon^\ast}(\Omega)})}},
\end{equation}

\begin{rem}
    It is possible to obtain estimates of the type \eqref{rhoestOrlicz} using the another estimates for the norm of the Jacobian $J_\varphi$. The estimate of $\|J\|_{L^\Phi}$ with $\Phi(u) = u \log^p(u+e)$, can be found, for example in \cite[Theorem 6.1]{HK14}. The planar case was considered in \cite{AGSR10} (see also \cite{HK14} for further links). Unfortunately, there are some technical obstacles to use these results. 
\end{rem}

\textbf{Acknowledgements.} The author is grateful to Alexander Ukhlov for the introduction to the topic and for the fruitful discussions.

\vskip 0.3cm

Alexander Menovschikov; Department of Mathematics, Ben-Gurion University of the Negev, P.O.Box 653, Beer Sheva, 8410501, Israel
 
\emph{E-mail address:} \email{menovschikovmath@gmail.com} \\

\end{document}